\newtheorem{theorem}{Theorem}[section]
\newtheorem{lemma}{Lemma}[section]
\newtheorem{corollary}[lemma]{Corollary}
\theoremstyle{remark}
\newtheorem*{remark}{Remark}
\theoremstyle{definition}
\numberwithin{equation}{section}
\DeclareMathOperator{\sech}{sech}
\DeclareMathOperator{\Ai}{Ai}
\DeclareMathOperator{\artanh}{artanh}
\DeclareMathOperator{\supp}{supp}
\newcommand{\Mod}[1]{\ (\mathrm{mod}\ #1)}
\title{The Second Moment of Sums of Hecke Eigenvalues I}
\author{Ned Carmichael}
\date{March 10, 2026}
\address{Department of Mathematics, King’s College London, London, WC2R 2LS, UK}
\email{ned.carmichael@kcl.ac.uk}
\begin{document}

\begin{abstract}
Let \(f\) be a holomorphic Hecke cusp form of weight \(k\) for \(\mathrm{SL}_2(\mathbb{Z})\), and let \((\lambda_f(n))_{n\geq1}\) denote its sequence of Hecke eigenvalues. We compute the first and second moments of the sums \(\mathcal S(x,f)=\sum_{x\leq n\leq 2x}\lambda_f(n)\), on average over forms \(f\) of large weight \(k\), in the regime where the length of the sums \(x\) is smaller than \(k^2\). We observe transitions in the size of the sums when \(x\approx k\) and \(x\approx k^2\). In subsequent work \cite{paper2}, it will be shown that once \(x\) is larger than \(k^2\) (where the latter transition occurs), the average size of the sums \(\mathcal S(x,f)\) becomes dramatically smaller. 
\end{abstract}

\maketitle

\section{Introduction}

Understanding the distribution of sums of arithmetical functions is a typical problem in analytic number theory, the Dirichlet divisor and Gauss circle problems being two classical examples. In this paper, we are interested in sums of Hecke eigenvalues attached to cusp forms. Let \(\mathcal B_k\) denote a basis for the space of holomorphic cusp forms of even weight \(k\) for the full modular group \(\mathrm{SL}_2(\mathbb{Z})\), consisting of orthogonal Hecke eigenforms. We write the Fourier expansion of a given form \(f\in\mathcal B_k\) as 
\[f(z)=\sum_{n\geq1}\lambda_f(n)n^{(k-1)/2} e^{2\pi i nz},\]
and normalise so that \(\lambda_f(1)=1\).  Since \(f\) is an eigenform (one has \(T_n f=\lambda_f(n)n^{(k-1)/2} f\) for the Hecke operators \((T_n)_{n\geq1}\)), the eigenvalues \((\lambda_f(n))_{n\geq1}\) are real, and Deligne's bound states \(|\lambda_f(n)|\leq d(n)\) (where \(d(n)\) denotes the divisor function). 

We will study the sums of eigenvalues
\begin{equation*}
\mathcal S(x,f)\coloneqq\sum_{x<n\leq 2x} \lambda_f(n),
\end{equation*}
as \(f\) varies over the basis \(\mathcal B_k\) of weight \(k\) eigenforms, with \(k\) being large (and even). To this end, we introduce the averaging operator 
\begin{equation}\label{favs} 
\langle g(f) \rangle=\sum_{f\in \mathcal B_k}\omega(f)g(f),
\end{equation}
where \(\omega(f)\) are the \emph{harmonic weights}
\[\omega(f)=\frac{\Gamma(k-1)}{(4\pi)^{k-1}\|f\|^2}.\]
These weights account for the fact that each \(f\in\mathcal B_k\) is arithmetically normalised (in the sense \(\lambda_f(1)=1\)), rather than \(L^2\) normalised. Note that \(\langle 1\rangle =\sum_f \omega(f)=1+\mathcal O(e^{-k})\). Our results describe the behaviour of the first and second moments of the sums \(\mathcal S(x,f)\) -- that is, the averages
\[\langle \mathcal S(x,f)\rangle = \sum_{f\in\mathcal B_k}\omega(f) \mathcal S(x,f) \: \text{ and } \: \langle \mathcal S(x,f)^2 \rangle = \sum_{f\in\mathcal B_k}\omega(f) \mathcal S(x,f)^2.\]

\begin{theorem}\label{thmmean}
We have the following estimates for the first moment of the sums \(\mathcal S(x,f)\).
\begin{enumerate}[label=(\roman*)]
    \item \label{meani} If \(x\leq k^2/(32\pi^2+1)\), then
    \[\langle \mathcal S(x,f)\rangle \ll e^{-\sqrt k}.\]
    \item \label{meanii} If \(k^2/(32\pi^2-1)\leq x\leq k^2/(16\pi^2+1)\), then
    \[\langle \mathcal S(x,f)\rangle = \frac{(-1)^{k/2}}{4\pi}k+\mathcal O(k^{7/8}).\]
\end{enumerate}
\end{theorem}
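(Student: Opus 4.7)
The plan is to apply the Petersson trace formula and isolate the dominant $c=1$ term. Using $\lambda_f(1)=1$ and $i^{-k}=(-1)^{k/2}$, the formula gives
\[
\langle\mathcal S(x,f)\rangle = 2\pi(-1)^{k/2}\sum_{c\geq 1}\frac{1}{c}\sum_{x<n\leq 2x} S(1,n;c)\,J_{k-1}\!\left(\frac{4\pi\sqrt n}{c}\right),
\]
the diagonal $\delta_{n,1}$ contribution vanishing because $x\geq 1$. I would then exploit the exponential decay of $J_{k-1}(y)$ for $y<k$: the Debye asymptotic yields $J_{k-1}(y)\ll k^{-1/2}\exp\!\bigl(-\tfrac{2\sqrt 2}{3}k(1-y/k)^{3/2}\bigr)$ uniformly for $y\leq(1-\eta)k$. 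For every $c\geq 2$ the Bessel argument is at most $2\pi\sqrt{2x}$, which by hypothesis is bounded above by $k/\sqrt{1+1/(16\pi^2)}<k$ by a fixed proportion, so (using the Weil bound or the trivial bound $|S(1,n;c)|\leq c$) the entire $c\geq 2$ contribution is exponentially small. The same argument disposes of part \ref{meani}: there $4\pi\sqrt n\leq 4\pi\sqrt{2x}\leq k\sqrt{1-1/(32\pi^2+1)}$ sits a positive proportion below $k$, and trivially summing the exponentially small Bessel factor over the $O(k^2)$ values of $n$ leaves a bound well within the claimed $e^{-\sqrt k}$.

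For part \ref{meanii} only the $c=1$ term survives and $S(1,n;1)=1$, so the task reduces to evaluating
\[
T(x) := \sum_{x<n\leq 2x}J_{k-1}(4\pi\sqrt n).
\]
The key step is to replace this by the integral $I(x):=\int_x^{2x}J_{k-1}(4\pi\sqrt t)\,dt$, which I would justify by Poisson summation: using the classical representation $J_{k-1}(y)=(2\pi)^{-1}\int_{-\pi}^{\pi}e^{i(y\sin\theta-(k-1)\theta)}\,d\theta$, the phase of the frequency-$m$ term is $4\pi\sqrt t\sin\theta-2\pi mt$, whose $t$-derivative $2\pi(\sin\theta/\sqrt t-m)$ never vanishes for $m\neq 0$ and $t\asymp k^2$, so repeated integration by parts annihilates those modes. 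For $I(x)$, the substitution $u=4\pi\sqrt t$ together with the antiderivative identity $\int u\,J_{k-1}(u)\,du=uJ_k(u)+(k-1)\int J_k(u)\,du$ gives
\[
I(x) = \frac{1}{8\pi^2}\Bigl[u J_k(u)\Bigr]_{4\pi\sqrt x}^{4\pi\sqrt{2x}} + \frac{k-1}{8\pi^2}\int_{4\pi\sqrt x}^{4\pi\sqrt{2x}}J_k(u)\,du.
\]
In the regime of \ref{meanii}, $4\pi\sqrt x$ lies a positive proportion of $k$ below $k$ while $4\pi\sqrt{2x}$ lies a positive proportion of $k$ above $k$. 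Combining $\int_0^\infty J_k(u)\,du=1$ with the exponential decay of $J_k$ below $k$ and one integration by parts against the Debye asymptotic above $k$ yields $\int_{4\pi\sqrt x}^{4\pi\sqrt{2x}}J_k(u)\,du=1+O(k^{-1/2})$, while the boundary term $[uJ_k(u)]$ is $O(k^{1/2})$ by the same Debye estimate. Thus $I(x) = k/(8\pi^2)+o(k)$, and multiplying by $2\pi(-1)^{k/2}$ recovers the main term $(-1)^{k/2}k/(4\pi)$.

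The principal obstacle lies in the error analysis, not the main term: the Bessel function transitions through exponential-decay, Airy, and Debye regimes across the summation range, and a careful uniform treatment of these asymptotics is needed to bound $T(x)-I(x)$ and the tail integral $\int_{4\pi\sqrt{2x}}^{\infty}J_k$. In particular, the Airy corrections near $4\pi\sqrt n\approx k$ control the boundary contributions and the derivative estimates that drive the sum-to-integral approximation, and it is their careful accounting that should deliver the $O(k^{7/8})$ power-saving error.
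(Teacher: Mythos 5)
Your overall strategy is sound and, for part \ref{meanii}, genuinely different from the paper's in its key step. The paper first applies the Vorono\"i summation formula (Lemma \ref{vorprop}) to shorten the sum and only then invokes Petersson (with a now-negligible off-diagonal); you apply Petersson directly and evaluate the surviving \(c=1\) off-diagonal sum. Both routes arrive at the same object, namely \(\frac{1}{8\pi^2}\int_{4\pi\sqrt x}^{4\pi\sqrt{2x}}uJ_{k-1}(u)\,du\) (your \(I(x)\), after substitution, is exactly the integral in Lemma \ref{vorprop2}). The real divergence is in how this integral is evaluated: the paper splits off the transition range and uses the Airy asymptotic (\ref{krasikovgood}) (Lemma \ref{transrangeint}, which is the source of the \(k^{7/8}\) error), together with the first-derivative test in the oscillatory range (Lemma \ref{oscillrangeint}); you instead use the exact identity \(\int uJ_{k-1}(u)\,du=uJ_k(u)+(k-1)\int J_k(u)\,du\) together with \(\int_0^\infty J_k(u)\,du=1\). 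Your route avoids the Airy analysis entirely, and once the endpoint term \(uJ_k(u)\ll k^{1/2}\) and the tails \(\int_0^{4\pi\sqrt x}J_k\ll e^{-ck}\), \(\int_{4\pi\sqrt{2x}}^\infty J_k\ll k^{-1/2}\) are bounded as you indicate (the hypotheses of \ref{meanii} place \(4\pi\sqrt x\) and \(4\pi\sqrt{2x}\) a fixed positive proportion below and above \(k\), so this works), it yields the sharper error \(O(k^{1/2})\). The paper's heavier Airy machinery is presumably set up because it is needed again for the second moment, where no such closed-form antiderivative is available.

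Two points need patching, though neither threatens the argument. First, in both parts your treatment of the \(c\)-sum does not converge as written: the Debye bound you quote is uniform in \(c\) (as \(c\to\infty\) the argument tends to \(0\) and \((1-y/k)^{3/2}\to1\)), so with the trivial Kloosterman bound each \(c\) contributes \(\gg e^{-\delta k}\) per \(n\) and \(\sum_{c\geq1}\) diverges. You need a small-argument bound with decay in the argument, e.g.\ \(J_{k-1}(z)\ll z^2e^{-14k/13}\) for \(z\leq k/4\) (the paper's (\ref{bessboundsmallarg})), to gain the factor \(c^{-2}\) for large \(c\). Second, the Poisson-summation step replacing \(T(x)\) by \(I(x)\) is glossed: integrating the sharp-cutoff integrals \(\int_x^{2x}\) by parts produces boundary terms of size \(\asymp1/|m|\), whose sum over \(m\neq0\) is only conditionally convergent, so ``repeated integration by parts annihilates those modes'' is not literally available. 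The cleanest fix is to forgo Poisson altogether and use partial summation: \(T(x)-I(x)\ll |J_{k-1}(4\pi\sqrt x)|+|J_{k-1}(4\pi\sqrt{2x})|+\int_x^{2x}\big|\tfrac{d}{dt}J_{k-1}(4\pi\sqrt t)\big|\,dt\ll k^{1/2}\), using \(J_{k-1}'(y)\ll (y^2-k^2)^{1/4}/y\) in the oscillatory range and the Airy-regime bounds near \(y=k\); this is already within your error budget.
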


We also compute the second moment \(\langle \mathcal S(x,f)^2\rangle\). Before giving the asymptotics, we must first introduce a (continuous) function \(L:[0,\infty)\to\mathbb{R}\), given by
\begin{equation}\label{ldef}
L(\xi)\coloneqq \begin{cases} 0 & \text{ if } 0\leq \xi\leq 1,\\ \log\xi & \text{ if } 1\leq \xi\leq \sqrt{2},\\ \log(2/\xi) & \text{ if } \sqrt{2}\leq \xi\leq 2,\\ 0& \text{ if } 2\leq \xi.\end{cases}
\end{equation}

\begin{theorem}\label{thmvarsimple}
Assume \(x\to\infty\) with \(k\), but \(x=o(k^2/\log^6 k)\). Then 
\[\langle \mathcal S(x,f)^2\rangle \sim x+\frac{(-1)^{k/2}}{2\pi}L\Big(\frac{k}{4\pi x}\Big)k.\]
\end{theorem}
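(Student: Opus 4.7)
My plan is to apply the Petersson trace formula (noting $i^{-k}=(-1)^{k/2}$ for even $k$)
\[\langle\lambda_f(m)\lambda_f(n)\rangle=\delta_{m=n}+2\pi(-1)^{k/2}\sum_{c\geq1}\frac{S(m,n;c)}{c}J_{k-1}\Big(\frac{4\pi\sqrt{mn}}{c}\Big),\]
and sum over $m,n\in(x,2x]$. This splits $\langle\mathcal S(x,f)^2\rangle$ into a diagonal contribution $x+O(1)$, accounting for the first main term of the asymptotic, and an off-diagonal $\mathcal O$. Since $J_{k-1}(y)$ is exponentially small for $y$ well below $k-1$, only small values of $c$ contribute meaningfully, and the $kL(\xi)$ main term arises entirely from $c=1$, where $S(m,n;1)=1$.

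For the $c=1$ term I would first pass from the sum to the double integral $\iint_{(x,2x]^2}J_{k-1}(4\pi\sqrt{mn})\,dm\,dn$ via Poisson summation, using that the non-zero dual frequencies $(h_1,h_2)$, when combined with the oscillatory phase $\pm 4\pi\sqrt{mn}$ of the Bessel asymptotic, have either no stationary point or one on the diagonal $m=n$, and can be shown to contribute at most $O(\sqrt x)$ by integration by parts. Changing variables to $(p,q)\coloneqq(\sqrt{mn},\sqrt{m/n})$ (Jacobian $2p/q$) reduces the integral to
\[\iint J_{k-1}(4\pi p)\,\frac{2p}{q}\,dp\,dq\]
with $q\in(1/\sqrt2,\sqrt2)$ and $p$ in a $q$-dependent subinterval of $(x,2x]$. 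For each $q$ the inner $p$-integral is concentrated at the Bessel transition $p_*\coloneqq(k-1)/(4\pi)$: the region $p<p_*$ is exponentially small, while the region $p>p_*$ yields only $O(\sqrt x)$ via boundary integration by parts against the oscillatory Debye phase. The dominant contribution, from $|p-p_*|\ll k^{1/3}$, is extracted using the uniform Airy asymptotic $J_\nu(\nu+\zeta\nu^{1/3})\sim(2/\nu)^{1/3}\Ai(-2^{1/3}\zeta)$ together with the identity $\int_{-\infty}^\infty\Ai(-u)\,du=1$, yielding $\int 2p\,J_{k-1}(4\pi p)\,dp=p_*/(2\pi)+O(\sqrt x)$ whenever $p_*$ lies well inside the $p$-interval.

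It then remains to integrate $1/q$ over those $q$ for which $p_*$ lies in the $p$-interval. Writing $\xi\coloneqq k/(4\pi x)$, the condition becomes $q>\max(1/\xi,\xi/2)$ for $q\leq 1$ and the symmetric condition for $q\geq 1$; a direct computation gives $\int dq/q=2L(\xi)$ in all regimes, the four cases of $L$ corresponding precisely to which of $q>1/\xi$ and $q>\xi/2$ is binding. Collecting the factors $2\pi(-1)^{k/2}\cdot(p_*/(2\pi))\cdot 2L(\xi)$ produces the stated main term $\frac{(-1)^{k/2}}{2\pi}kL(\xi)$. The main obstacle is then to show that the $c\geq 2$ off-diagonal contributions are $o(x+kL(\xi))$: one must combine the Weil bound $|S(m,n;c)|\ll d(c)\sqrt{c(m,n,c)}$ (saving $1/\sqrt c$ per Kloosterman) with cancellation from the oscillation of $J_{k-1}$ and of the additive characters $e(am/c),\,e(\bar a n/c)$ obtained by opening the Kloosterman sum, using stationary phase in $m$ and $n$. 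One treats separately those $c$ for which the Bessel transition $ck/(4\pi)$ lies in $(x,2x]$ (Airy analysis mimicking $c=1$, now damped by $1/c$ and with additional Kloosterman cancellation) and those $c$ for which it does not (stronger oscillatory estimates away from the transition); the accumulated loss in summing over $c$, together with the secondary Poisson frequencies from the $c=1$ step, is what one expects to produce the $\log^6 k$ factor in the hypothesis $x=o(k^2/\log^6 k)$.
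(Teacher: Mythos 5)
Your treatment of the regime \(x\ll k^{1+\epsilon}\) is essentially the paper's: Petersson, truncation of the \(c\)-sum, Poisson summation in \(m,n\), Airy/Debye analysis of \(\int pJ_{k-1}(4\pi p)\,dp\) at the transition \(p_*=k/(4\pi)\), and the computation \(\int dq/q=2L(\xi)\) (the paper's change of variables \(y=\sqrt{uv}/x\), \(z=v/x\) and the integral \(\int_{\max(1,y^2/2)}^{\min(2,y^2)}dz/z\) is the same calculation in different coordinates). Two technical caveats there: the paper smooths the sharp cutoff with a weight \(w_\Delta\), \(\Delta=k^{1-\epsilon}\), before applying Poisson, which you will need in some form to control the dual frequencies; and your claimed \(O(\sqrt x)\) for the nonzero frequencies and the oscillatory tail is optimistic --- the \(\pm(1,1)\) frequencies have a stationary point on the diagonal and contribute up to \(\asymp k^{2/3}\) per modulus, and the oscillatory tail of the \(p\)-integral is \(O(k^{7/8})\), not \(O(\sqrt x)\); these are still \(o(x)\) in the range where they arise, so the conclusion survives, but the bookkeeping matters for what follows.

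The genuine gap is the range \(k^{1+\epsilon}\ll x=o(k^2/\log^6k)\). There the number of moduli \(c\leq 32\pi x/k\) contributing to the off-diagonal is \(\asymp x/k\), and after Poisson and stationary phase each surviving frequency \(\pm(1,1)\) contributes roughly \(c\cdot\int_{4\pi x/c}^{8\pi x/c}y^{1/2}J_{k-1}(y)e^{\pm iy}\,dy\); with the natural bound \(k^{2/3}\) for the transition range of this integral, summing over \(c\) gives \((x/k)^2k^{2/3}\), which already exceeds \(x\) once \(x\gg k^{4/3}\). The Weil bound cannot help, since after Poisson summation modulo \(c\) the Kloosterman sums are resolved exactly (the congruences \(\tilde n\equiv-a\), \(\tilde m\equiv-a^*\) force \(a\equiv\pm1\)) and there is no residual character sum to exploit. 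The paper therefore abandons this method entirely for \(x\geq k^{1+\epsilon}\) and instead applies the Vorono\"i summation formula first, shortening the sums to effective length \(k^2/x\leq k^{1-\epsilon/2}\), after which Petersson's off-diagonal is exponentially small and the diagonal --- now weighted by the Hankel transforms \(\tilde w\) --- is evaluated via Mellin inversion, a Plancherel/Hankel-inversion identity, and a second-moment bound for \(\zeta(1/2+it)\). That last step produces the error \(O(x^{3/2}\log^3k/k)\), which is the true source of the hypothesis \(x=o(k^2/\log^6k)\); your attribution of the \(\log^6k\) to losses in the \(c\)-sum is a misdiagnosis, and as written your proposal does not prove the theorem in this upper range.
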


A more precise statement, including error terms, is given in Theorem \ref{thmvar}. Note that the secondary term in Theorem \ref{thmvarsimple} appears only if \(k/(8\pi)\leq x\leq k/(4\pi)\), but in this range one still has \(\langle \mathcal S(x,f)^2\rangle \asymp x\).

Theorems \ref{thmmean} and \ref{thmvarsimple} display transitions in the sizes of the first and second moments, appearing when \(x\in [k^2/(32\pi^2), k^2/(16\pi^2)]\) and \(x\in[k/(8\pi), k/(4\pi)]\) respectively. These are a consequence of the transitions of Bessel functions which arise naturally in the computation.

After the transition occurring when \(x\in[k^2/(32\pi^2), k^2/(16\pi^2)]\), the behaviour of the sums changes significantly. The average size of the sums shifts from being around \(x^{1/2}\) when roughly \(x\leq k^2/(16\pi^2)\), to around \(x^{1/4}\) when roughly \(x\geq k^2/(16\pi^2)\). The first and second moments when \(x\geq k^2/(8\pi^2)\) are studied in \cite{paper2}. 
The precise asymptotic behaviour is unclear around the transition. 
It is an interesting problem to extend our results to address this regime. 

Bober, Booker, Lee and Lowry-Duda \cite{bblld} have studied sums of the eigenvalues \((\lambda_f(n))_{n\geq1}\) taken over primes (also in weight aspect). Interestingly, they observe `murmurations' in the mean values of these sums when their length is \(\approx k^2\). See also Zubrilina \cite{zubrilina}, which provides another example of the murmurations phenomenon in the level aspect.

Transitions analogous to those we observe also appear in different settings. 
For example, the distribution of sums of eigenvalues in arithmetic progressions has been studied by several authors (on average over the congruence class). 
For a fixed eigenform \(f\), Blomer \cite{blomer} first considered the variance 
\begin{equation}\label{blomervar}
\mathrm{Var}\coloneqq \sum_{b\Mod q} \Big|\sum_{\substack{n\leq x\\n\equiv b\Mod q}}\lambda_f(n)\Big|^2,
\end{equation}
and showed \(\mathrm{Var}\ll_{f,\epsilon} x^{1+\epsilon}\) for any \(\epsilon>0\). L\"u \cite{lu} sharpened this, removing the \(\epsilon\), and improved the bound to \(\mathrm{Var}\ll_{f,\epsilon} q^{4/3}x^{2/3+\epsilon}\) in the range \(x\geq q^4\).

Lau and Zhao \cite{lauzhao} later proved asymptotics for the variance (\ref{blomervar}) in certain ranges of \(x\). Notably, they observed a transition in the behaviour of the sums when \(x\approx q^2\). For \(x\geq q^{4-\epsilon}\) they improve L\"u's bound to \(\mathrm{Var}\ll_{f,\epsilon} q^{1/3}x^{2/3+\epsilon}\). However, for \(q^{2+\epsilon}\leq x\leq q^{4-\epsilon}\), they are able to prove a (complicated) asymptotic for \(\mathrm{Var}\). For simplicity, we do not state the asymptotic here, but note that it implies \(qx^{1/2}/\log\log q \ll_f \mathrm{Var}\ll_f qx^{1/2}\). On the other hand, they prove that for \(q^{1+\epsilon}\leq x\leq q^{2-\epsilon}\), one has \(\mathrm{Var}\sim c_f x\) for some constant \(c_f>0\). 
This transition in the asymptotic size of the variance when \(x\approx q^2\) is a (\(q\)-aspect) analogue of the transition we observe (in \(k\)-aspect) when \(x\approx k^2\). 
However, the second transition when \(x\approx k\) (over which we are able to asymptotically compute the variance) is a feature not present in the \(q\)-aspect problem (when \(x\leq q\), the sums in progressions contain at most one term).

Whilst Lau and Zhao establish a bound \(\mathrm{Var}\ll_f q x^{1/2}\) for \(q^{2-\epsilon}\leq x\leq q^{2+\epsilon}\), the exact asymptotic behaviour of the variance (\ref{blomervar}) is unclear over this transition.
Interestingly, Fouvry, Ganguly, Kowalski and Michel \cite{fgkm} (see also \cite{lesteryesha}) prove a central limit theorem for these sums in progressions (on average over the congruence class) in this transition regime, when \(q\) is prime and \(x=q^{2-o(1)}\). 

In another instance, Petrow \cite{petrow} has studied shifted convolution sums \(\sum_n \lambda_f(n)\lambda_f(n+h)\). 
These are related to the off-diagonal terms appearing in this article, since one has \((\sum_n\lambda_f(n))^2=\sum_n \lambda_f(n)^2+\sum_{h\neq0}\sum_n\lambda_f(n)\lambda_f(n+h)\). 
Petrow detects a transition in the first moment of these shifted convolution sums, taken on average over the shifts \(h\), when the length of the sums is \(\approx h^2\). 
See also the work of Conrey, Farmer and Soundararajan \cite{conreyfarmersound} for another example of such a transition in sums of the Jacobi symbol.

The proof of Theorems \ref{thmmean} and \ref{thmvarsimple} relies upon the Petersson trace formula to compute the \(f\)-averages. This splits the first and second moments into diagonal and off-diagonal terms. The off-diagonal terms contain the Bessel functions \(J_{k-1}\). Essentially, the off-diagonal is shown to be small, except at the transitions (where \(x\in [k^2/(32\pi^2), k^2/(16\pi^2)]\) for the first moment and \(x\in[k/(8\pi), k/(4\pi)]\) for the second moment). Here, large peaks of the Bessel function produce a large off-diagonal contribution. 

Our handling of these off-diagonal contributions is analogous to work of Hough \cite{hough}, who proves a second moment estimate for the \(L\)-functions attached to the eigenvalues \((\lambda_f(n))_{n\geq1}\) in weight aspect (see also \cite[Theorem 4.2]{balkanovafrolenkov}). After an application of the Petersson trace formula, Hough (using Poisson summation) similarly extracts a secondary main term from the off-diagonal contribution.

\begin{remark} 
This paper is organised as follows. In the following section we state some basic facts that will be required throughout the text, including the Petersson trace formula and a Vorono\"i type summation formula for the sums \(\mathcal S(x,f)\). The Bessel functions \(J_{k-1}\) arise in both of these, and so understanding their behaviour is key. Appendix \ref{besselappendix} contains an overview of the Bessel functions, including various facts and asymptotics that will be used throughout. Equipped with these preliminaries, we prove Theorem \ref{thmmean} in Section \ref{secmean} and Theorem \ref{thmvarsimple} in Section \ref{secvar}.
\end{remark}

\subsection*{Acknowledgements} I would like to thank Stephen Lester for many helpful discussions and useful comments on an earlier draft of this paper. I also thank Bingrong Huang for pointing out the relevant work \cite{hough}. This work was supported by the Additional Funding Programme for Mathematical Sciences, delivered by EPSRC (EP/V521917/1) and the Heilbronn Institute for Mathematical Research.

\section{Preliminaries}\label{secprelim}

\subsection{Notation}
We use the standard notation \(e(z)=e^{2\pi i z}\) throughout. 
The divisor function is denoted by \(d(n)=\sum_{d\mid n}1\).
We also denote by \(a^*\Mod c\) the multiplicative inverse of \(a \Mod c\) (in other words \(aa^*\equiv 1\Mod c\)).

We use big \(\mathcal O\) notation, writing \(F(x)=\mathcal O(G(x))\) if \(|F(x)|\leq C|G(x)|\) for some constant \(C>0\) and all valid inputs \(x\). 
We also write \(F(x)\ll G(x)\) to indicate \(F(x)=\mathcal O(G(x))\) (or \(F(x)\gg G(x)\) to indicate \(G(x)=\mathcal O(F(x))\)).
We write \(F(x)\asymp G(x)\) to indicate \(F(x)\ll G(x)\ll F(x)\).
If the implied constants depend on some parameters, say \(p_1,p_2\ldots\), this can be stated explicitly or indicated with subscripts: \(\mathcal O_{p_1,p_2\ldots}\) or \(\ll_{p_1,p_2\ldots}\). 
Throughout this article, \(\epsilon\) will denote an arbitrarily small positive quantity, and the implied constants in general depend on \(\epsilon\) without mention. Dependence on other parameters is made explicit throughout.

Finally, given \(\Delta\geq1\), throughout this paper we will write \(w=w_\Delta\) for a smooth function \(w:\mathbb{R}\to \mathbb{R}\) satisfying the following:
\begin{equation}\label{wdef}
\begin{gathered}
\supp w= [1,2],\\
w(\xi)=1 \text{ for } 1+\Delta^{-1}\leq \xi\leq 2-\Delta^{-1},\\
\text{for all integers } j\geq 0 \text{ and all } \xi, \text{ we have } w^{(j)}(\xi)\ll_j \Delta^j.
\end{gathered}
\end{equation}

\subsection{The Petersson Trace Formula}

The key tool for computing \(f\)-averages is the Petersson trace formula. Since the \(f\)-averages are normalised by the harmonic weights (see (\ref{favs})), this reads as follows (see \cite[Theorem 3.6]{iwaniec}).

\begin{lemma}[Petersson Trace Formula]\label{trace}
Let \(S(m,n;c)\) denote the Kloosterman sums 
\[S(m,n;c)=\sum_{\substack{a\Mod c\\ (a,c)=1}} e\Big(\frac{a^*m+an}{c}\Big),\]
Let \(\langle\cdot \rangle\) be given by (\ref{favs}). Then, for any positive integers \(m\) and \(n\), we have
\begin{equation}\label{pet}
\langle \lambda_f(n)\lambda_f(m)\rangle=\delta_{mn}+2\pi (-1)^{k/2}\sum_{c\geq1} c^{-1}S(m,n;c)J_{k-1}\Big(\frac{4\pi\sqrt{mn}}{c}\Big),
\end{equation}
where \(J_{k-1}\) denotes the Bessel function and 
\[\delta_{mn}=\begin{cases} 1 &\text{ if } m=n,\\ 0 &\text{ otherwise}.\end{cases}\]
\end{lemma}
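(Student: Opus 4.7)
My plan is to follow the classical derivation via Poincaré series, as in \cite[Theorem 3.6]{iwaniec}: expand a suitable Poincaré series in two different ways, then equate Fourier coefficients. Introduce the $m$-th holomorphic Poincaré series of weight $k$,
\[P_m(z)=\sum_{\gamma\in\Gamma_\infty\backslash\mathrm{SL}_2(\mathbb{Z})}(cz+d)^{-k}e(m\gamma z),\]
where $c,d$ are the bottom-row entries of $\gamma$ and $\Gamma_\infty$ is the stabiliser of $\infty$. For $k\geq 4$ the series converges absolutely and defines a holomorphic cusp form of weight $k$.

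For the first expansion, I would apply the standard unfolding trick: for any $f\in\mathcal B_k$ with Fourier expansion $f(z)=\sum_n a_f(n)e(nz)$, where $a_f(n)=\lambda_f(n)n^{(k-1)/2}$, the Petersson inner product satisfies
\[(f,P_m)_{\mathrm{Pet}}=\frac{\Gamma(k-1)}{(4\pi m)^{k-1}}\overline{a_f(m)}.\]
Expanding $P_m$ in the basis $\mathcal B_k$ and using that the $\lambda_f$ are real, this rearranges to
\[P_m(z)=m^{-(k-1)/2}\sum_{f\in\mathcal B_k}\omega(f)\lambda_f(m)f(z).\]
Reading off the $n$-th Fourier coefficient and multiplying through by $(m/n)^{(k-1)/2}$ produces the left-hand side $\langle\lambda_f(m)\lambda_f(n)\rangle$ of (\ref{pet}).

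For the second expansion, I would compute the $n$-th Fourier coefficient of $P_m$ directly via the Bruhat decomposition of $\Gamma_\infty\backslash\mathrm{SL}_2(\mathbb{Z})$. The trivial coset contributes $\delta_{mn}$. For cosets with lower-left entry $c\geq 1$, one parametrises representatives by $c$ together with $a\pmod c$, $(a,c)=1$, with $d\equiv a^*\pmod c$; collecting the arithmetic factors $e((a^*m+an)/c)$ over $a$ into the Kloosterman sum $S(m,n;c)$ reduces the matter to the transverse integral
\[\int_{-\infty}^{\infty}(cz)^{-k}\,e\!\left(-\frac{m}{c^2z}+nz\right)dz.\]
By Hankel's integral representation of the Bessel function this evaluates to a constant multiple of $c^{-1}(n/m)^{(k-1)/2}J_{k-1}(4\pi\sqrt{mn}/c)$, the constant being $2\pi i^{-k}=2\pi(-1)^{k/2}$ for even $k$. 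This is precisely the off-diagonal factor in (\ref{pet}).

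The only substantive obstacle is the Bessel integral evaluation, a standard contour calculation via Hankel's formula; everything else amounts to careful bookkeeping of the normalisation constants $\Gamma(k-1)/(4\pi)^{k-1}$, the factors $m^{(k-1)/2}$ and $n^{(k-1)/2}$, and the sign $i^{-k}$, which all cancel neatly to give the clean identity (\ref{pet}).
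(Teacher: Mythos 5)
The paper does not prove this lemma at all: it is quoted as a known result with a citation to \cite[Theorem 3.6]{iwaniec}, and your outline is precisely the standard derivation given there (unfolding against the Poincar\'e series $P_m$, spectral expansion of $P_m$ over $\mathcal B_k$, and direct computation of its $n$-th Fourier coefficient via the coset decomposition and the Hankel/Bessel integral). The sketch is correct, including the bookkeeping of $\omega(f)$, the powers $m^{(k-1)/2}$, $n^{(k-1)/2}$, and the sign $i^{-k}=(-1)^{k/2}$ for even $k$.
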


When the integers \(m\) and \(n\) in (\ref{pet}) are small compared to the weight \(k\), the Bessel functions appearing in (\ref{pet}) are very small. Consequently, the trace formula effectively functions as an orthogonality relation in this case.

\begin{corollary}\label{averages}
Let \(k\) be sufficiently large, and \(\langle \cdot\rangle\) be as given in (\ref{favs}). Let \(m\) and \(n\) be positive integers satisfying \(4\pi \sqrt{mn}\leq 999k/1000\). Then
\[\langle \lambda_f(n)\lambda_f(m)\rangle =\delta_{mn}+\mathcal O(\exp(-k^{3/5})).\]
\end{corollary}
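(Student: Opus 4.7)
By the Petersson trace formula (Lemma \ref{trace}), we have
\[\langle \lambda_f(n)\lambda_f(m)\rangle - \delta_{mn} = 2\pi(-1)^{k/2}\sum_{c\geq 1} c^{-1} S(m,n;c) J_{k-1}\Big(\frac{4\pi\sqrt{mn}}{c}\Big),\]
so the task reduces to bounding this off-diagonal sum by $\exp(-k^{3/5})$. The plan is to exploit the fact that $J_{k-1}(x)$ decays exponentially in $k$ whenever $x$ is bounded strictly below $k-1$. Writing $Y=4\pi\sqrt{mn}$, the hypothesis $Y\leq 999k/1000$ ensures that for every $c\geq 1$ the Bessel argument $Y/c$ lies below $(1-\delta)(k-1)$ for some absolute $\delta>0$, once $k$ is large. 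In this sub-critical regime the saddle-point asymptotic $J_\nu(\nu\sech\beta)\sim (2\pi\nu\tanh\beta)^{-1/2}e^{-\nu(\beta-\tanh\beta)}$, which is recorded in the Bessel appendix, supplies a uniform bound of the form $|J_{k-1}(x)|\ll \exp(-c_0 k)$ with some absolute $c_0>0$.

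I would then split the sum over $c$ at the point $c=Y$. For the head range $1\leq c\leq Y$, combining the above exponential Bessel bound with the trivial Kloosterman estimate $|S(m,n;c)|\leq c$ gives a contribution bounded by $Y\exp(-c_0 k)\ll k\exp(-c_0 k)$. For the tail range $c>Y$ one has $Y/c<1$, and the power series for $J_{k-1}$ then forms an alternating sequence with decreasing terms (since $Y/c<2\sqrt{k}$), so it is dominated by its first term; this yields $|J_{k-1}(Y/c)|\leq (Y/(2c))^{k-1}/\Gamma(k)$, and summing against $|S(m,n;c)|\leq c$ produces a negligible contribution of order $2^{1-k}Y/\Gamma(k)$. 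Adding the two ranges gives a total off-diagonal bound $\ll \exp(-c'k)$ for some $c'>0$, which is comfortably better than the claimed $\exp(-k^{3/5})$.

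The main obstacle is essentially reduced to quoting the correct uniform Bessel bound from the appendix: the hypothesis $Y\leq 999k/1000$ is tailored precisely so as to keep every term of the off-diagonal in the exponentially decaying regime, after which the split into head and tail is routine. The weakened error term $\exp(-k^{3/5})$ in the statement simply provides slack to absorb any polynomial-in-$k$ factors picked up along the way, and it is presumably chosen at this strength for convenient use elsewhere in the paper.
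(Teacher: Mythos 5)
Your proof is correct and follows essentially the same route as the paper: apply the Petersson trace formula, bound the Kloosterman sums trivially by $c$, and use exponential decay of $J_{k-1}$ below its transition point, splitting the $c$-sum into a head and a tail. The only (cosmetic) differences are the split point ($c=Y$ rather than $c=4$) and that you control the tail via the first term of the power series for $J_{k-1}$, where the paper instead uses the small-argument bound $J_\nu(z)\ll z^2 e^{-14\nu/13}$ from Lemma \ref{besselbounds}\ref{besi}.
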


\begin{proof}
We show that the off-diagonal contribution in (\ref{pet}) is \(\mathcal O(\exp(-k^{3/5}))\). Trivially bounding the Kloosterman sums \(|S(m,n;c)|\leq c\), the off-diagonal is seen to be
\begin{equation}\label{offdiagpetersson}
2\pi (-1)^{k/2}\sum_{c\geq1} c^{-1}S(m,n;c)J_{k-1}\Big(\frac{4\pi\sqrt{mn}}{c}\Big)\ll \sum_{c\geq1} \Big|J_{k-1}\Big(\frac{4\pi\sqrt{mn}}{c}\Big)\Big|.
\end{equation}
If \(1\leq c<4\), then for large enough \(k\)
\[\frac{4\pi\sqrt{mn}}c \leq 4\pi\sqrt{mn} \leq \frac{999k}{1000}\leq k-k^{1/3+3/5}\implies J_{k-1}\Big(\frac{4\pi\sqrt{mn}}{c}\Big)\ll \exp(-k^{3/5}),\]
using the Bessel function bound (\ref{bessel1}) of Lemma \ref{besselbounds}. On the other hand, if \(c\geq4\) then 
\[\frac{4\pi \sqrt{mn}}{c}\leq \frac{999k}{4000}\leq \frac {k}4\implies J_{k-1}\Big(\frac{4\pi\sqrt{mn}}{c}\Big)\ll \frac{mn}{c^2}e^{-14k/13},\]
by the Bessel function bound (\ref{bessboundsmallarg}) of Lemma \ref{besselbounds}. Combining the above two bounds with (\ref{offdiagpetersson}) shows the off-diagonal is 
\[\ll \exp(-k^{3/5})+\sum_{c\geq4}\frac{mn}{c^2}e^{-14k/13}\ll \exp(-k^{3/5}),\]
as claimed.
\end{proof}



\subsection{A Vorono\"i Summation Formula}

The second result we need is a standard Vorono\"i-type summation formula for sums of coefficients of cusp forms. In order to apply this to the sharp cut-off sums \(\mathcal S(x,f)\)
we first introduce a smoothing.


\begin{lemma}\label{vorprop}
Let \(\Delta\) be a sufficiently large parameter satisfying \(\Delta\leq x^{1-\epsilon}\) for some \(\epsilon>0\), and let \(w=w_\Delta\) be the associated smooth function given in (\ref{wdef}). Then for a Hecke eigenform \(f\) of weight \(k\) for \(\mathrm{SL}_2(\mathbb{Z})\), normalised so that \(\lambda_f(1)=1\), we have
\begin{equation*}
\mathcal S(x,f)=2\pi (-1)^{k/2} x\sum_{n\geq1} \lambda_f(n)\tilde w\Big(\frac{nx}{k^2+\Delta^2}\Big)+\mathcal O \Big(\frac{x\log x}{\Delta}\Big),
\end{equation*}
where 
\begin{equation}\label{tildew}
\tilde{w}(\xi)=\tilde w_\Delta(\xi)=\int_0^\infty w(t)J_{k-1}(4\pi \sqrt{(k^2+\Delta^2)\xi t})dt.
\end{equation}
\end{lemma}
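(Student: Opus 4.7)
The plan is to smooth the sharp cutoff in \(\mathcal S(x,f)\), apply the classical Voronoi summation formula for holomorphic cusp forms on \(\mathrm{SL}_2(\mathbb{Z})\), and then rescale to match the definition of \(\tilde w\). First, I would replace \(\mathcal S(x,f)\) by the smoothed sum \(\sum_n\lambda_f(n)w(n/x)\). By the properties of \(w\) in Definition \ref{wdef}, the difference is supported on the two transition intervals \([x,x+x/\Delta]\) and \([2x-x/\Delta, 2x]\), on which both \(w(n/x)\) and the indicator of \((x,2x]\) are bounded by \(1\). Deligne's bound \(|\lambda_f(n)|\leq d(n)\) together with \(\sum_{n\leq N}d(n)\ll N\log N\) then yields
\[\Big|\mathcal S(x,f) - \sum_{n\geq1}\lambda_f(n)w(n/x)\Big| \ll \frac{x\log x}{\Delta},\]
matching the claimed error term.

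Second, I would invoke Voronoi summation: for any \(g\in C_c^\infty((0,\infty))\) one has
\[\sum_{n\geq1}\lambda_f(n)g(n) = 2\pi(-1)^{k/2}\sum_{n\geq1}\lambda_f(n)\int_0^\infty g(y) J_{k-1}\bigl(4\pi\sqrt{ny}\bigr)\,dy,\]
which follows from the functional equation of the Hecke \(L\)-function \(L(s,f)\) via Mellin inversion of \(g\) (see e.g.\ \cite{iwaniec}). Since \(w\) is supported in \([1,2]\), the function \(g(y)=w(y/x)\) lies in \(C_c^\infty((0,\infty))\). Substituting \(y=xt\) in the Bessel integral on the right then gives
\[\sum_{n\geq1}\lambda_f(n)w(n/x) = 2\pi(-1)^{k/2} x\sum_{n\geq1}\lambda_f(n)\int_0^\infty w(t) J_{k-1}\bigl(4\pi\sqrt{nxt}\bigr)\,dt.\]

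Finally, observing that \(4\pi\sqrt{nxt}=4\pi\sqrt{(k^2+\Delta^2)\cdot \tfrac{nx}{k^2+\Delta^2}\cdot t}\), the inner integral is precisely \(\tilde w(nx/(k^2+\Delta^2))\) as defined in \eqref{tildew}; combined with the smoothing step this is the lemma. The argument is essentially routine, so there is no serious obstacle; the only points needing care are the absolute convergence of the dual sum (which follows from Deligne combined with the decay \(J_{k-1}(y)\ll y^{-1/2}\) for \(y\gtrsim k\), and, for small arguments, repeated integration by parts in the Bessel integral exploiting \(w\in C_c^\infty\)), and the legitimacy of \(g(y)=w(y/x)\) as a Voronoi test function, which is immediate from its smoothness and compact support in \((0,\infty)\). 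The factor \(k^2+\Delta^2\) inside \(\tilde w\) is just a convenient normalisation placing the Bessel transition (around argument \(\asymp k\)) at \(\xi\asymp 1\), which will streamline later applications.
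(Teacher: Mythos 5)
Your overall strategy (smooth the cutoff, apply the level-one Voronoi summation formula to \(g(y)=w(y/x)\), rescale \(y=xt\) to recognise \(\tilde w\)) is exactly the paper's, and the Voronoi step and the rescaling are fine. The gap is in the smoothing error. You bound the difference between \(\mathcal S(x,f)\) and the smoothed sum by \(\sum d(n)\) taken over the two transition intervals of length \(x/\Delta\), and then assert that \(\sum_{n\leq N}d(n)\ll N\log N\) "yields" the bound \((x/\Delta)\log x\). That deduction is not valid: an asymptotic for the full sum up to \(N\) says nothing about how \(d(n)\) distributes over a short window, and here the window has length \(x/\Delta\), which under the hypothesis \(\Delta\leq x^{1-\epsilon}\) can be as short as \(x^{\epsilon}\). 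The elementary divisor-switching bound \(\sum_{x<n\leq x+H}d(n)\ll H\log x+\sqrt{x}\) only recovers the claimed estimate when \(H\gg\sqrt{x}\), i.e.\ \(\Delta\ll\sqrt{x}\); in the paper's actual applications \(\Delta\) is much larger than \(\sqrt{x}\) (e.g.\ \(\Delta=x^{1/2}k^{1-\epsilon/2}\) with \(x\asymp k^2\)), so this range genuinely matters.

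The correct input is a short-interval mean value theorem for the divisor function, namely Shiu's theorem, which gives \(\sum_{x<n\leq x+H}d(n)\ll H\log x\) for \(H\geq x^{\epsilon}\); this is precisely where the hypothesis \(\Delta\leq x^{1-\epsilon}\) is used, and the fact that your argument never invokes that hypothesis is a sign that something is missing. (A cheaper patch is \(d(n)\ll_\epsilon n^{\epsilon}\), giving an error \(\mathcal O(x^{1+\epsilon}/\Delta)\), but that is weaker than the stated \(\mathcal O(x\log x/\Delta)\) and would force you to re-examine the error bookkeeping downstream.) With Shiu's theorem inserted at this point, the rest of your proof goes through as written.
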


\begin{proof}
We claim
\begin{equation}\label{smoothing}
\mathcal S(x,f)=\sum_{n\geq 1} \lambda_f(n) w\Big(\frac{n}{x}\Big)+\mathcal O \Big(\frac{x\log x}{\Delta}\Big).
\end{equation}
Indeed, by Deligne's bound (\(|\lambda_f(n)|\leq d(n)\)), the error in approximating \(\mathcal S(x,f)\) by the smoothed sums in (\ref{smoothing}) is
\begin{equation}\label{error1}
\ll \sum_{\substack{x\leq n\leq (1+\Delta^{-1})x\\
\text{or }(2-\Delta^{-1})x\leq n\leq 2x}}d(n).
\end{equation}
A result of Shiu \cite[Theorem 1]{shiu} (applicable since \(\Delta\leq x^{1-\epsilon}\)) shows (\ref{error1}) is
\[\ll \frac{x}{\Delta \log x} \exp\Big(\sum_{p\leq 2x}\frac{d(p)}{p}\Big)= \frac{x}{\Delta \log x} \exp\big(2\log\log{2x}+\mathcal O(1)\big)\ll \frac{x\log x}{\Delta}.\]
This proves (\ref{smoothing}). It is a standard matter to transform the smoothed sums -- the lemma now follows at once from (\ref{smoothing}) and the smooth Vorono\"i summation formula for coefficients of cusp forms found in \cite[ex.9, p.83]{iwanieckowalski}.
\end{proof}

The effective length of the transformed sums in Lemma \ref{vorprop} is around \((k^2+\Delta^2)/x\). Indeed, the weights \(\tilde w\) decay rapidly, as is established concretely in the following lemma.

\begin{lemma}\label{ibp}
For \(\xi>0\) and any integer \(A\geq 0\), we have
\[\tilde w(\xi)\ll_A \xi^{-A/2}.\]
\end{lemma}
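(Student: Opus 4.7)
The plan is to iterate integration by parts using the Bessel function identity
\[\frac{d}{dt}\bigl[t^{m/2} J_m(a\sqrt t)\bigr] = \frac{a}{2}\, t^{(m-1)/2} J_{m-1}(a\sqrt t),\]
where I set $a := 4\pi\sqrt{(k^2+\Delta^2)\xi}$, so that $\tilde w(\xi) = \int_0^\infty w(t)\, J_{k-1}(a\sqrt t)\, dt$. Each application of this identity gains a factor of $2/a$, raises the Bessel index by one, and transfers a derivative onto the weight. The inequality $k^2+\Delta^2 \geq (k+\Delta)^2/2$ gives $a \geq 2\sqrt 2\, \pi (k+\Delta)\sqrt\xi$, so each iteration effectively produces $(k+\Delta)^{-1}\xi^{-1/2}$; this matches the desired $\xi^{-A/2}$ after $A$ iterations provided the resulting weight grows at most like $(k+\Delta)^A$.

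Concretely, I would rewrite
\[\tilde w(\xi) = \int_0^\infty h(t)\, t^{(k-1)/2}\, J_{k-1}(a\sqrt t)\, dt, \qquad h(t) := w(t)\, t^{-(k-1)/2},\]
(with $h$ smooth and supported in $[1,2]$), and integrate by parts $A$ times; all boundary terms vanish, yielding
\[\tilde w(\xi) = \Bigl(-\frac{2}{a}\Bigr)^A \int_0^\infty h^{(A)}(t)\, t^{(k-1+A)/2}\, J_{k-1+A}(a\sqrt t)\, dt.\]
The key bookkeeping step is to establish $|h^{(A)}(t)\, t^{(k-1+A)/2}| \ll_A (k+\Delta)^A$ on $[1,2]$. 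By Leibniz's rule, each derivative of $t^{-(k-1)/2}$ contributes a factor $\ll k$ together with an extra $t^{-1}$, while derivatives of $w$ contribute $\Delta$; the algebraic powers of $t$ combine with $t^{(k-1+A)/2}$ to leave only the bounded terms $t^{j-A/2}$ for $0\leq j\leq A$, and the constants assemble into $(k+\Delta)^A$ via the binomial theorem. Using the trivial bound $|J_{k-1+A}|\leq 1$ together with $|\supp w|=1$, this gives $|\tilde w(\xi)| \ll_A ((k+\Delta)/a)^A \ll_A \xi^{-A/2}$, as claimed.

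The main obstacle is purely technical: the Leibniz-rule bookkeeping ensuring that the potentially dangerous growing factors of $k$ (from differentiating $t^{-(k-1)/2}$) are exactly absorbed by the compensating $t^{(k-1+A)/2}$. Structurally the argument is just repeated integration by parts, and pleasingly no analysis of the Bessel function in its transition region $a\sqrt t \approx k$ is required, since the trivial bound $|J_\nu|\leq 1$ suffices throughout.
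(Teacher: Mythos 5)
Your proposal is correct and follows essentially the same route as the paper: both iterate integration by parts via the identity $\frac{d}{dz}(z^\nu J_\nu(z))=z^\nu J_{\nu-1}(z)$ (raising the Bessel index), use $|J_n|\leq 1$, and perform the same Leibniz bookkeeping in which the factors $k^{A-j}\Delta^j$ from differentiating $t^{-(k-1)/2}$ and $w$ are summed and absorbed into $((k^2+\Delta^2)\xi)^{-A/2}$. Your packaging of $w(t)t^{-(k-1)/2}$ into a single function $h$ is only a cosmetic difference from the paper's explicit expansion.
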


\begin{proof}
This is a standard application of integration by parts (see for example \cite[Lemma 2.1]{lesteryesha}, \cite[Lemma 3.1]{lauzhao} or \cite[(9)]{blomer}) so we give only a brief sketch. 

It follows from (\ref{besseldiff1}) that for any \(\nu\geq 0\),
\begin{multline*} 
\frac{d}{dt}\Big\{(16\pi^2(k^2+\Delta^2)\xi t)^{\nu/2}J_\nu(4\pi\sqrt{(k^2+\Delta^2)\xi t})\Big\}\\
=\frac12 (16\pi^2(k^2+\Delta^2)\xi)^{(\nu+1)/2} t^{(\nu-1)/2} J_{\nu-1}(4\pi\sqrt{(k^2+\Delta^2)\xi t}).
\end{multline*}
Integrating by parts, we obtain 
\begin{multline}\label{ibpedfirsttime}
\tilde w(\xi)=\int_0^\infty w(t)J_{k-1}(4\pi\sqrt{(k^2+\Delta^2)\xi t})dt\\
=2(16\pi^2(k^2+\Delta^2)\xi )^{-(k+1)/2} \int_0^\infty w(t) t^{-(k-1)/2} d\big( (16\pi^2(k^2+\Delta^2)\xi t)^{k/2} J_{k}(4\pi\sqrt{(k^2+\Delta^2)\xi t})\big)\\
=\frac{k-1}{4\pi\sqrt{(k^2+\Delta^2)\xi}}\int_0^\infty t^{-1/2} w(t) J_k(4\pi\sqrt{(k^2+\Delta^2)\xi t})dt\\
-\frac1{2\pi\sqrt{(k^2+\Delta^2)\xi}}\int_0^\infty t^{1/2}w'(t)J_k(4\pi \sqrt{(k^2+\Delta^2)\xi t})dt.
\end{multline}
Since \(\supp w=[1,2]\), we apply the bounds \(|J_n(z)|\leq 1\) (valid for integer \(n\) and \(z\geq0\), see (\ref{besselintrep2})) and \(w^{(j)}(t)\ll_j \Delta^j\) (valid for any integer \(j\geq0\)) to conclude
\[\tilde w(\xi)\ll \frac{k+\Delta}{\sqrt{(k^2+\Delta^2)\xi}}\ll \xi^{-1/2}.\]
This proves the case \(A=1\). Integrating (\ref{ibpedfirsttime}) by parts a further \(A-1\) times, similarly we deduce the bound
\begin{multline*}
\tilde w(\xi)\ll \big((k^2+\Delta^2)\xi\big)^{-A/2}\sum_{0\leq j\leq A} k^{A-j}\int_0^\infty t^{-A/2+j}|w^{(j)}(t)J_{k-1+A}(4\pi \sqrt{(k^2+\Delta^2)\xi t})|dt\\
\ll_A \big((k^2+\Delta^2)\xi\big)^{-A/2}\sum_{0\leq j\leq A} k^{A-j}\Delta^j\ll_A \xi^{-A/2}.
\end{multline*}
\end{proof}

\subsection{Oscillatory Integrals}

Finally, we record a lemma which will be used to bound oscillatory integrals. This is essentially a very general formulation of integration by parts, due to Blomer, Khan and Young \cite[Lemma 8.1]{stationaryphase}.

\begin{lemma}[Integration by Parts]\label{bkyibp}
Let \(Q,U,R,X>0\) and \(Y\geq 1\) be some parameters. Let \(\rho\) and \(\phi\) be two smooth functions. Assume \(\rho\) is compactly supported on the interval \([\alpha,\beta]\) and satisfies
\[\rho^{(j)}(t)\ll_j XU^{-j} \text{ for } j=0,1,2\ldots\]
Assume \(\phi\) satisfies
\[\phi'(t)\gg R \text{ and } \phi^{(j)}(t)\ll_j YQ^{-j} \text{ for } j=2,3,\ldots\]
Then for any integer \(B\geq 0\),
\begin{equation}\label{intbound}
\int_\alpha^\beta \rho(t)e^{i\phi(t)}dt\ll_B (\beta-\alpha)X\Big\{\Big(\frac{QR}{Y^{1/2}}\Big)^{-B}+(RU)^{-B}\Big\}.
\end{equation}
\end{lemma}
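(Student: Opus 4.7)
The plan is to iterate integration by parts $B$ times, exploiting the identity $e^{i\phi(t)} = (i\phi'(t))^{-1}(e^{i\phi(t)})'$. Concretely, I define the linear operator
\[(Df)(t) := i\frac{d}{dt}\Big(\frac{f(t)}{\phi'(t)}\Big) = i\frac{f'(t)}{\phi'(t)} - i\frac{f(t)\phi''(t)}{\phi'(t)^2}.\]
Since $\rho$ is compactly supported in the interior of $[\alpha,\beta]$, one integration by parts produces no boundary contribution and yields $\int_\alpha^\beta f(t) e^{i\phi(t)}\,dt = \int_\alpha^\beta (Df)(t) e^{i\phi(t)}\,dt$ for any smooth $f$ with the same support. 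Iterating $B$ times reduces matters to controlling
\[\int_\alpha^\beta \rho(t) e^{i\phi(t)}\,dt = \int_\alpha^\beta (D^B \rho)(t) e^{i\phi(t)}\,dt,\]
whose modulus is at most $(\beta - \alpha)\,\|D^B \rho\|_\infty$.

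A routine induction on $B$, based on the quotient rule, shows that $D^B \rho$ is a finite sum of $O_B(1)$ terms of the form
\[c_{\mathbf a} \cdot \frac{\rho^{(a_0)} \prod_{j=1}^{m} \phi^{(a_j+1)}}{(\phi')^{a_0 + m + \sum_{j=1}^{m} a_j}},\]
with $m \geq 0$, $a_j \geq 1$ for each $1 \leq j \leq m$, and $a_0 + a_1 + \cdots + a_m = B$. Inserting the hypotheses $|\rho^{(a_0)}| \ll XU^{-a_0}$, $|\phi^{(a_j+1)}| \ll YQ^{-(a_j+1)}$ (valid because $a_j + 1 \geq 2$), and $|\phi'|^{-1} \ll R^{-1}$, and using the identity $\sum_j a_j = B - a_0$, each such summand is bounded by
\[\frac{X\,Y^m}{(RU)^{a_0}(QR)^{m + (B - a_0)}}.\]

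Write $P := (RU)^{-1}$ and $T := Y^{1/2}(QR)^{-1}$ for the two quantities appearing in the conclusion. Splitting $(QR)^{m + (B-a_0)} = (QR)^{2m}(QR)^{B-a_0-m}$ (the second exponent is non-negative, since $m \leq \sum_j a_j = B - a_0$), and using $Y \geq 1$ to replace $(QR)^{-1} \leq T$, the above display is at most
\[X\,P^{a_0}\,T^{2m}\,T^{\,B - a_0 - m} = X\,P^{a_0}\,T^{\,m + (B-a_0)}.\]
The total exponent of $P$ and $T$ is $a_0 + m + (B - a_0) = B + m \geq B$. If both $P, T \leq 1$ this is at most $X \max(P,T)^{B} \leq X(P^B + T^B)$; otherwise the conclusion is trivial, since then $X(P^B + T^B) \geq X \geq \|\rho\|_\infty$. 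Summing the $O_B(1)$ terms and inserting the resulting bound for $\|D^B\rho\|_\infty$ completes the proof. The main obstacle is the inductive identification of the structural form of $D^B \rho$, together with the careful exponent bookkeeping that (crucially using $Y \geq 1$) produces the sharp $(QR/Y^{1/2})^{-B}$ dependence rather than the cruder $(YQ^{-2}R^{-2})^{B}$ one would get from iterating the naive bound $|Df| \ll X/(RU) + XY/(QR)^2$ without tracking the combinatorial structure.
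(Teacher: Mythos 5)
The paper does not prove this lemma itself; it is quoted verbatim from Blomer--Khan--Young \cite[Lemma 8.1]{stationaryphase}, whose proof is exactly the iterated integration-by-parts argument you give. Your reconstruction is correct: the inductive structure of \(D^B\rho\), the term-by-term bound \(XY^m(RU)^{-a_0}(QR)^{-(m+B-a_0)}\), and the exponent bookkeeping using \(m\leq B-a_0\) and \(Y\geq1\) to reach \(X(P^B+T^B)\) all check out (and the trivial-bound fallback when \(\max(P,T)>1\) is a legitimate way to finish).
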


\section{The First Moment: Proof of Theorem \ref{thmmean}}\label{secmean}

The first part of Theorem \ref{thmmean} is an essentially trivial consequence of the Petersson trace formula.

\begin{proof}[Proof of Theorem \ref{thmmean}, part \ref{meani}]
Our assumption \(x\leq k^2/(32\pi^2+1)\) implies \(4\pi\sqrt{2x}\leq k(1+1/(32\pi^2))^{-1/2}\leq 999k/1000\). Consequently, we can apply the Petersson trace formula in the form of Corollary \ref{averages}. This shows
\begin{equation*} 
\langle \mathcal S(x,f)\rangle =\sum_{x\leq n\leq 2x}\langle \lambda_f(n)\lambda_f(1)\rangle \ll x\exp(-k^{3/5}) \ll e^{-\sqrt k}.
\end{equation*}
\end{proof}

The proof of part \ref{meanii} is rather more involved. In the above, we had the strong bound \(\mathcal O(\exp(-k^{3/5}))\) (from Corollary \ref{averages}) for the off-diagonal contribution from the Petersson trace formula. The off-diagonal contribution to \(\langle \lambda_f(n)\lambda_f(1)\rangle\) is
\[2\pi(-1)^{k/2}\sum_{c\geq1} c^{-1}S(n,1;c)J_{k-1}\Big(\frac{4\pi\sqrt{n}}{c}\Big).\]
Unfortunately, the above is no longer negligible for \(x\leq n\leq 2x\) once \(x\geq k^2/(32\pi^2)\). This is because some of the Bessel functions \(J_{k-1}(4\pi\sqrt{n}/c)\) are now close to their peak (if \(x\geq k^2/(32\pi^2)\) then \(4\pi\sqrt{n}/c\approx k\) for some \(x\leq n\leq 2x\) and \(c\geq1\)). This produces the large contribution appearing in part \ref{meanii} of Theorem \ref{thmmean}. For this reason, it is convenient to apply the Vorono\"i summation formula given in Lemma \ref{vorprop}, which reveals this off-diagonal contribution. This is done in the following lemma.

\begin{lemma}\label{vorprop2}
Suppose \(k^2/(64\pi^2)\leq x\leq k^4\). Then for any \(\epsilon>0\), one has
\begin{equation*}
\langle \mathcal S(x,f)\rangle =\frac{(-1)^{k/2}}{4\pi}\int_{4\pi\sqrt x}^{4\pi \sqrt{2x}}yJ_{k-1}(y)dy+\mathcal O(x^{1/2}k^{-1+\epsilon}).
\end{equation*}
\end{lemma}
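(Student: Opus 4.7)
The plan is to dualize $\mathcal S(x,f)$ using the Vorono\"i formula (Lemma \ref{vorprop}) and then apply the Petersson trace formula (Lemma \ref{trace}) termwise to the resulting dual sum. Setting $K^2 := k^2+\Delta^2$ with $\Delta$ a parameter to be optimised, Lemma \ref{vorprop} followed by harmonic averaging and Petersson (via $\langle\lambda_f(n)\rangle = \langle\lambda_f(n)\lambda_f(1)\rangle = \delta_{n,1}+(\text{off-diagonal})$) yields
\[
\langle\mathcal S(x,f)\rangle = 2\pi(-1)^{k/2} x\,\tilde w(x/K^2) + 2\pi(-1)^{k/2} x\sum_{n\geq 2}\langle\lambda_f(n)\rangle\,\tilde w(nx/K^2) + O(x\log x/\Delta),
\]
since $\langle\lambda_f(1)\rangle = 1 + O(e^{-k})$ singles out the $n=1$ term as the leading piece.

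For the $n=1$ main term, substituting $y = 4\pi\sqrt{xt}$ in the integral defining $\tilde w$ gives
\[
2\pi(-1)^{k/2} x\,\tilde w(x/K^2) = \frac{(-1)^{k/2}}{4\pi}\int_0^\infty w\bigl(y^2/(16\pi^2 x)\bigr)\, y J_{k-1}(y)\,dy.
\]
Replacing $w$ by the indicator of $[1,2]$ produces the claimed main-term integral; the error is supported on two $y$-intervals of length $O(\sqrt x/\Delta)$ near $y = 4\pi\sqrt x$ and $y = 4\pi\sqrt{2x}$, and I would bound it by case analysis in the three Bessel regimes (exponentially small when $y\leq k-k^{1/3+\epsilon}$; $|yJ_{k-1}(y)|\ll k^{2/3}$ in the transition window $|y-k|\leq k^{1/3+\epsilon}$; and $|yJ_{k-1}(y)|\ll y^{1/2}$ when $y\geq k+k^{1/3+\epsilon}$). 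For the $n\geq 2$ tail, the hypothesis $x\geq k^2/(64\pi^2)$ together with a conservative choice of $\Delta$ ensures that the effective support of $\tilde w(nx/K^2)$ lies within the range $4\pi\sqrt n \leq 999k/1000$ where Corollary \ref{averages} yields $\langle\lambda_f(n)\rangle\ll\exp(-k^{3/5})$; combined with the rapid decay $\tilde w(\xi)\ll\xi^{-A/2}$ of Lemma \ref{ibp}, this contribution is clearly negligible.

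Finally, I would optimise $\Delta$ (roughly on the scale $\Delta = x^{1-\epsilon'}$ for some small $\epsilon'<\epsilon$, perhaps subject to the further constraint $\Delta\ll k\sqrt x$ needed to keep $K^2/x$ inside the Corollary \ref{averages} range) so that the Vorono\"i smoothing error $O(x\log x/\Delta)$ and the error from replacing $w$ by $\mathbf 1_{[1,2]}$ are both dominated by $x^{1/2}k^{-1+\epsilon}$. The chief obstacle I anticipate is ensuring the smoothing error $O(x\log x/\Delta)$ is small enough when $x$ is near the lower end $k^2/(64\pi^2)$: the naive pointwise bound is too weak, and one must exploit a saving from averaging, namely that $\langle\lambda_f(n)\rangle\ll k^{-1/3+\epsilon}$ for $n$ in the boundary smoothing regions (using Petersson with the $k^{-1/3}$ coming from Bessel in the transition), so that the averaged smoothing error is really $\ll k^{-1/3+\epsilon}\cdot x/\Delta$ rather than $x\log x/\Delta$.
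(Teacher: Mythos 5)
Your proposal follows the paper's proof essentially verbatim: Vorono\"i dualisation via Lemma \ref{vorprop}, Petersson (Corollary \ref{averages}) to isolate the \(n=1\) term of the dual sum, and unsmoothing of \(\tilde w(x/(k^2+\Delta^2))\) via the substitution \(y=4\pi\sqrt{xt}\). The one point where you diverge is your anticipated ``chief obstacle,'' which is illusory: the paper simply takes \(\Delta=x^{1/2}k^{1-\epsilon/2}\) (which satisfies \(\Delta\leq x^{1-\epsilon/10}\) for \(x\geq k^2/(64\pi^2)\), and is exactly the largest admissible choice keeping the effective dual length \((k^2+\Delta^2)k^{\epsilon/2}/x\) below the \(4\pi\sqrt n\leq 999k/1000\) threshold of Corollary \ref{averages}), so the smoothing error is \(x\log x/\Delta=x^{1/2}k^{-1+\epsilon/2}\log x\ll x^{1/2}k^{-1+\epsilon}\) uniformly, including at \(x\asymp k^2\), with no averaging saving required; likewise the uniform bound \(J_{k-1}(y)\ll k^{-1/3}\) suffices for the boundary intervals, so your three-regime case analysis is unnecessary.
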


\begin{proof}
Throughout this proof, fix \(\epsilon>0\) and set \(\Delta=x^{1/2}k^{1-\epsilon/2}\). Note (by our assumptions on \(x\)) that \(\Delta\leq 8\pi xk^{-\epsilon/2}\leq x^{1-\epsilon/10}\), say. Thus we may apply Lemma \ref{vorprop} with this choice of \(\Delta\) to see that for \(f\in\mathcal B_k\), 
\begin{equation}\label{transtruncc}
\mathcal S(x,f)=2\pi (-1)^{k/2}x\sum_{n\geq 1} \lambda_f(n)\tilde w\Big(\frac{nx}{k^2+\Delta^2}\Big)+\mathcal O \Big(\frac{x^{1/2}\log x}{k^{1-\epsilon/2}}\Big),
\end{equation}
where
\begin{equation*}
\tilde{w}(\xi)=\tilde{w}_\Delta(\xi)=\int_0^\infty w_\Delta(t)J_{k-1}(4\pi \sqrt{(k^2+\Delta^2)\xi t})dt.
\end{equation*}
Using Lemma \ref{ibp}, we may truncate the sum (\ref{transtruncc}). Indeed, if \(n\geq(k^2+\Delta^2)k^{\epsilon/2}/x\), then Lemma \ref{ibp} shows
\[\tilde w\Big(\frac{nx}{k^2+\Delta^2}\Big) \ll n^{-2} k^{-1000},\]
say. Noting also that (since \(x\geq k^2/(64\pi^2)\)) we have \((k^2+\Delta^2)k^{\epsilon/2}/x=x^{-1}k^{2+\epsilon/2}+k^{2-\epsilon/2}\leq 2k^{2-\epsilon/2},\)
we deduce from (\ref{transtruncc}) that
\begin{equation}\label{transtruncc2}
\mathcal S(x,f)=2\pi (-1)^{k/2}x\sum_{n\leq 2k^{2-\epsilon/2}} \lambda_f(n)\tilde w\Big(\frac{nx}{k^2+\Delta^2}\Big)+\mathcal O (x^{1/2}k^{-1+\epsilon}).
\end{equation}
We now apply the Petersson trace formula in the form of Corollary \ref{averages}, which yields
\begin{equation}\label{petapplied201}
\langle \mathcal S(x,f)\rangle=2\pi (-1)^{k/2} x\tilde w\Big(\frac{x}{k^2+\Delta^2}\Big)+ \mathcal O (x^{1/2}k^{-1+\epsilon}).
\end{equation}
(The off-diagonal terms contribute \(\ll e^{-\sqrt k}\), as in the proof of part \ref{meani}.) Compute
\begin{multline}\label{bessint}
\tilde w\Big(\frac{x}{k^2+\Delta^2}\Big)
=\int_0^\infty w(t)J_{k-1}(4\pi\sqrt{xt})dt\\
=\int_1^2J_{k-1}(4\pi\sqrt{xt})dt+\mathcal O\Big(\Big\{\int_1^{1+\Delta^{-1}}+\int_{2-\Delta^{-1}}^2\Big\}|J_{k-1}(4\pi\sqrt{xt})|dt\Big)\\
=\frac{1}{8\pi^2x}\int_{4\pi\sqrt{x}}^{4\pi\sqrt{2x}} yJ_{k-1}(y)dy+\mathcal O(k^{-1/3}\Delta^{-1}).
\end{multline}
In the last line, we substituted \(y=4\pi\sqrt{xt}\) and used the bound \(J_{k-1}(z)\ll k^{-1/3}\) given in part \ref{besiii} of Lemma \ref{besselbounds}. Combining (\ref{petapplied201}) and (\ref{bessint}) now proves the lemma.
\end{proof}

In the range of part \ref{meani} of Theorem \ref{thmmean}, the Bessel function appearing in Lemma \ref{vorprop2} is negligible. In the range considered in part \ref{meanii}, the transition regime and part of the oscillatory regime of the Bessel function are contained in the region of integration (which is \([4\pi\sqrt x, 4\pi\sqrt{2x}]\)). The following two lemmas estimate \(\int yJ_{k-1}(y)dy\) in these two different regimes of the Bessel function. We first compute the contribution of the transition regime. 

\begin{lemma}\label{transrangeint}
Let \(\nu\) be sufficiently large. Then
\[\int_{(\nu+1)-(\nu+1)^{1/2}}^{(\nu+1)+(\nu+1)^{1/2}}yJ_{\nu}(y)dy=\nu+\mathcal O(\nu^{7/8}).\]
\end{lemma}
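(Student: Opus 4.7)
The plan is to use a Bessel antiderivative identity to trade $yJ_\nu(y)$ for $J_{\nu+1}(y)$, then extract the main term from the classical Weber--Schafheitlin identity $\int_0^\infty J_{\nu+1}(y)\,dy = 1$. Concretely, the standard recurrence $\frac{d}{dy}[y^{\nu+1}J_{\nu+1}(y)] = y^{\nu+1}J_\nu(y)$ simplifies, after dividing by $y^\nu$ and rearranging, to
\[
y J_\nu(y) \;=\; \frac{d}{dy}\bigl[y J_{\nu+1}(y)\bigr] + \nu J_{\nu+1}(y).
\]
Writing $a = (\nu+1) - (\nu+1)^{1/2}$ and $b = (\nu+1) + (\nu+1)^{1/2}$ and integrating yields
\[
\int_a^b y J_\nu(y)\,dy \;=\; b J_{\nu+1}(b) - a J_{\nu+1}(a) + \nu \int_a^b J_{\nu+1}(y)\,dy.
\]

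The two boundary terms are dispatched quickly. Since $(\nu+1) - a = (\nu+1)^{1/2} \gg (\nu+1)^{1/3}$, the point $a$ lies well inside the exponentially small regime of $J_{\nu+1}$, and the Bessel bound (\ref{bessel1}) from Lemma \ref{besselbounds} gives $a J_{\nu+1}(a) = O(e^{-\nu^c})$. At the other end $b$ sits in the oscillatory region of $J_{\nu+1}$, so the standard $|J_{\nu+1}(b)|\ll b^{-1/2}$ applies and $b J_{\nu+1}(b) \ll \nu^{1/2}$, comfortably within the claimed $O(\nu^{7/8})$ error.

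For the main term I would invoke $\int_0^\infty J_{\nu+1}(y)\,dy = 1$ and write
\[
\int_a^b J_{\nu+1}(y)\,dy \;=\; 1 - \int_0^a J_{\nu+1}(y)\,dy - \int_b^\infty J_{\nu+1}(y)\,dy.
\]
The first tail is exponentially small by the same Bessel decay bound. For the second tail I would feed the oscillatory asymptotic $J_{\nu+1}(y) \sim \bigl(2/(\pi\sqrt{y^2-(\nu+1)^2})\bigr)^{1/2}\cos\Phi(y)$, with $\Phi'(y) = \sqrt{y^2-(\nu+1)^2}/y$, into Lemma \ref{bkyibp} applied on the dyadic intervals $[(\nu+1)+2^j(\nu+1)^{1/2},\,(\nu+1)+2^{j+1}(\nu+1)^{1/2}]$ for $j\geq 0$. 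On each such piece a single integration by parts produces a contribution of order (amplitude)/(phase derivative); summing the geometric series over $j$ yields $\int_b^\infty J_{\nu+1}(y)\,dy \ll \nu^{-1/8}$, with the dominant contribution coming from the piece $j=0$ where the amplitude is of order $\nu^{-3/8}$ and $\Phi'\asymp\nu^{-1/4}$. Consequently $\nu \int_a^b J_{\nu+1}(y)\,dy = \nu + O(\nu^{7/8})$, and inserting into the identity above completes the proof.

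The main technical obstacle is this tail estimate $\int_b^\infty J_{\nu+1}(y)\,dy \ll \nu^{-1/8}$: because $b$ is only $(\nu+1)^{1/2}$ above the turning point of $J_{\nu+1}$, the uniform oscillatory asymptotic degrades and the phase derivative collapses to $\nu^{-1/4}$ at $y=b$, so neither the amplitude nor the phase is on a single fixed scale across $[b,\infty)$. The dyadic decomposition, with parameters in Lemma \ref{bkyibp} chosen piece-by-piece, is what prevents these scale degradations from ruining the bound, and the $-1/8$ exponent arising at $y=b$ is precisely what drives the stated $\nu^{7/8}$ error term.
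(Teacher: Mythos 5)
Your argument is correct in substance but takes a genuinely different route from the paper. The paper first trims the interval to \([\nu-\nu^{1/2},\nu+\nu^{1/2}]\) and replaces the factor \(y\) by \(\nu\) (each at a cost of \(O(\nu^{2/3})\) via the uniform bound (\ref{bessel2})), then rescales \(y=\nu+\nu^{1/3}z\) and applies the Airy asymptotic (\ref{krasikovgood}); the main term becomes \(\nu\int\Ai\), evaluated by \(\int_{-\infty}^{\infty}\Ai=1\) together with the Airy tail bound \(\int_{\pm\xi}^{\pm\infty}\Ai\ll\xi^{-3/4}\) at \(\xi\asymp\nu^{1/6}\). You instead use the recurrence (\ref{besseldiff1}) to write \(yJ_\nu=\frac{d}{dy}[yJ_{\nu+1}]+\nu J_{\nu+1}\), reducing everything to \(\nu\int_a^b J_{\nu+1}\), and then exploit \(\int_0^\infty J_{\nu+1}=1\) (which is available in the paper from (\ref{mellinbessel}) at \(s=1\)) plus tail estimates. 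The two proofs are structurally parallel --- in both, the \(\nu^{7/8}\) error is \(\nu\) times a \(\nu^{-1/8}\) tail --- but yours avoids the Airy machinery entirely at the price of a more delicate Bessel tail bound \(\int_b^\infty J_{\nu+1}\ll\nu^{-1/8}\), where the amplitude and phase scales degenerate toward the turning point; your dyadic decomposition handles this correctly. Two small repairs: the claim \(|J_{\nu+1}(b)|\ll b^{-1/2}\) is false so close to the turning point (the correct bounds are \(\nu^{-1/3}\) from (\ref{bessel2}) or \(\nu^{-3/8}\) from (\ref{bessel3trunc}), giving \(bJ_{\nu+1}(b)\ll\nu^{2/3}\), still well within \(O(\nu^{7/8})\)); and Lemma \ref{bkyibp} requires smooth compactly supported weights and \(B\geq2\) here, so on sharp dyadic intervals it is cleaner to use the first-derivative test (\cite[Lemma 4.3]{titchmarsh}) exactly as the paper does in Lemma \ref{oscillrangeint}, not forgetting the \(O(y^2(y^2-(\nu+1)^2)^{-7/4})\) error of (\ref{bessel3trunc}), which also integrates to \(O(\nu^{-1/8})\).
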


\begin{proof}
First note that the bound (\ref{bessel2}) of Lemma \ref{besselbounds} shows 
\begin{align}\label{lem321} 
\int_{(\nu+1)-(\nu+1)^{1/2}}^{(\nu+1)+(\nu+1)^{1/2}}&yJ_\nu(y)dy=\int_{\nu-\nu^{1/2}}^{\nu+\nu^{1/2}}yJ_{\nu}(y)dy\\
&\hspace{4em}+\mathcal O\Big(\Big\{\int_{\nu+\nu^{1/2}}^{(\nu+1)+(\nu+1)^{1/2}}+\int_{\nu-\nu^{1/2}}^{(\nu+1)-(\nu+1)^{1/2}}\Big\}|yJ_{\nu}(y)|dy\Big)\nonumber\\
&=\nu\int_{\nu-\nu^{1/2}}^{\nu+\nu^{1/2}}  J_{\nu}(y)dy+\mathcal O\Big(\int_{\nu-\nu^{1/2}}^{\nu+\nu^{1/2}}|\nu-y||J_\nu(y)|dy\Big)+\mathcal O(\nu^{2/3})\nonumber\\
&=\nu^{4/3}\int_{-\nu^{1/6}}^{\nu^{1/6}}J_\nu(\nu+\nu^{1/3}z)dz+\mathcal O(\nu^{2/3}).\nonumber
\end{align}
We now apply the asymptotic (\ref{krasikovgood}) of Lemma \ref{besselasymptotics}, which shows 
\begin{multline}\label{lem322}
\nu^{4/3}\int_{-\nu^{1/6}}^{\nu^{1/6}}J_\nu(\nu+\nu^{1/3}z)dz=2^{1/3}\nu\int_{-\nu^{1/6}}^{\nu^{1/6}}\Ai(-2^{1/3}z)dz+\mathcal O\Big(\nu^{1/3}\int_{-\nu^{1/6}}^{\nu^{1/6}}(z^{9/4}+1)dz\Big)\\
=\nu\int_{-2^{1/3}\nu^{1/6}}^{2^{1/3}\nu^{1/6}}\Ai(z)dz+\mathcal O(\nu^{7/8}).
\end{multline}
The remaining Airy integral can be evaluated by extending the range of integration to infinity. Using the asymptotic (\ref{airyneg}) for the Airy function, one has for any sufficiently large parameter \(\xi\)
\begin{align*}
\int_{-\infty}^{-\xi}\Ai(z)dz&=\frac{1}{\sqrt \pi}\int_\xi^\infty z^{-1/4}\cos\Big(\frac{2}{3}z^{3/2}-\frac{\pi}{4}\Big)dz+\mathcal O\Big(\int_{\xi}^\infty z^{-7/4}dz\Big)\\
&=\frac{1}{\sqrt\pi}\int_{\xi}^\infty z^{-3/4}d\Big(\sin\Big(\frac{2}{3}z^{3/2}-\frac{\pi}{4}\Big)\Big)+\mathcal O(\xi^{-3/4})\\
&=-\frac{1}{\sqrt\pi}\xi^{-3/4}\sin\Big(\frac{2}{3}\xi^{3/2}-\frac{\pi}{4}\Big)+\frac{3}{4\sqrt\pi}\int_{\xi}^\infty z^{-7/4}\sin\Big(\frac{2}{3}z^{3/2}-\frac{\pi}{4}\Big)dz+\mathcal O(\xi^{-3/4})\\
&\ll \xi^{-3/4}.
\end{align*}
Similarly (using (\ref{airypos})) one can show
\[\int_\xi^\infty \Ai(z)dz\ll \xi^{-3/4}.\]
Consequently, using (\ref{airyint}) we have
\begin{equation}\label{lem323} 
\nu\int_{-2^{1/3}\nu^{1/6}}^{2^{1/3}\nu^{1/6}}\Ai(z)dz=\nu\int_{-\infty}^\infty \Ai(z)dz+\mathcal O(\nu^{7/8})=\nu+\mathcal O(\nu^{7/8}).
\end{equation}
Combining (\ref{lem321}), (\ref{lem322}) and (\ref{lem323}) completes the proof. 
\end{proof}

Next, we bound the contribution of the oscillatory regime.

\begin{lemma}\label{oscillrangeint}
Let \(\nu\) be sufficiently large and \(\nu+\nu^{1/3+\epsilon}\leq \alpha\leq \beta\leq 2\alpha\). Then
\begin{equation*}
\int_{\alpha}^\beta yJ_{\nu}(y)dy\ll \alpha^2 (\alpha^2-\nu^2)^{-3/4}.
\end{equation*}
\end{lemma}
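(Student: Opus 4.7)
My plan is to rewrite $y J_\nu(y)$ as a perfect derivative via Bessel's ODE and then integrate by parts once, exploiting the oscillation of $J_\nu$ in the range $y \geq \nu + \nu^{1/3+\epsilon}$. Starting from
\[y^2 J_\nu''(y) + y J_\nu'(y) + (y^2-\nu^2)J_\nu(y) = 0,\]
division by $y$ rearranges to $(y J_\nu'(y))' = -(y^2-\nu^2)J_\nu(y)/y$, so that
\[y J_\nu(y) = -\frac{y^2}{y^2-\nu^2}\bigl(y J_\nu'(y)\bigr)'.\]

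I then integrate by parts on $[\alpha,\beta]$, using $\frac{d}{dy}\bigl(y^2/(y^2-\nu^2)\bigr) = -2\nu^2 y/(y^2-\nu^2)^2$, to obtain
\[\int_\alpha^\beta y J_\nu(y)\,dy = -\left[\frac{y^3 J_\nu'(y)}{y^2-\nu^2}\right]_\alpha^\beta - \int_\alpha^\beta \frac{2\nu^2 y^2}{(y^2-\nu^2)^2} J_\nu'(y)\,dy.\]
Both pieces will be estimated using the oscillatory-regime bound $|J_\nu'(y)| \ll (y^2-\nu^2)^{1/4}/y$, which falls out by differentiating the standard asymptotic $J_\nu(y) \sim A(y)\cos\phi(y)$ with $\phi'(y) = \sqrt{y^2-\nu^2}/y$ and noting that the contribution from the phase derivative dominates. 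With this bound, the boundary term is majorised by $y^2/(y^2-\nu^2)^{3/4}$ at each endpoint, which is $\ll \alpha^2/(\alpha^2-\nu^2)^{3/4}$ under the hypothesis $\beta \leq 2\alpha$. The remaining integral, upon the substitution $u = y^2-\nu^2$, collapses to $\ll \nu^2/(\alpha^2-\nu^2)^{3/4}$, which lies within the target since $\nu \leq \alpha$.

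The only real technical input is thus the derivative estimate on $J_\nu$; I expect a suitable form to appear in Appendix \ref{besselappendix}, or at worst to follow in a few lines from the oscillatory asymptotic recorded there. Should an off-the-shelf derivative bound prove inconvenient, one can instead perform a second integration by parts on the remaining integral to convert $J_\nu'$ back into $J_\nu$ and invoke the amplitude bound $|J_\nu(y)| \ll (y^2-\nu^2)^{-1/4}$; the hypothesis $\alpha \geq \nu + \nu^{1/3+\epsilon}$ (which forces $\alpha^2-\nu^2 \geq \nu^{4/3+\epsilon}$) keeps the resulting boundary and integral terms safely below the target. The essential trick is the identification of $y J_\nu(y)$ as a total derivative via Bessel's ODE; once that is in hand, the rest is routine calculation.
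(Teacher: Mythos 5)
Your route is genuinely different from the paper's. The paper substitutes the asymptotic (\ref{bessel3trunc}), $J_\nu(y)=\sqrt{2/\pi}\,(y^2-\nu^2)^{-1/4}\cos\omega(y)+\mathcal O(y^3(y^2-\nu^2)^{-7/4}/y)$, integrates the error term directly, and kills the main term with the first-derivative test (\cite[Lemma 4.3]{titchmarsh}) using $\omega'(y)=(y^2-\nu^2)^{1/2}/y$; the bound $\alpha^2(\alpha^2-\nu^2)^{-3/4}$ is exactly $\max_y |y(y^2-\nu^2)^{-1/4}/\omega'(y)|$. Your identity $yJ_\nu=-\tfrac{y^2}{y^2-\nu^2}(yJ_\nu')'$ from Bessel's ODE is correct, the integration by parts is computed correctly, and the estimation of both resulting pieces is arithmetically right \emph{granted} the bound $|J_\nu'(y)|\ll(y^2-\nu^2)^{1/4}/y$.

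That derivative bound is, however, a genuine gap, and it is load-bearing. It does not ``fall out by differentiating the standard asymptotic'': an asymptotic relation with an $\mathcal O$-error cannot be differentiated term by term, and nothing in Appendix \ref{besselappendix} controls $J_\nu'$ in the oscillatory regime — the only derivative information recorded is (\ref{besseldiff1}), which via $J_\nu'=J_{\nu-1}-\tfrac{\nu}{y}J_\nu$ yields only the weaker amplitude bound $|J_\nu'(y)|\ll(y^2-\nu^2)^{-1/4}$ (the two leading oscillatory terms do not visibly cancel without a delicate comparison of the phases $\omega_{\nu-1}$ and $\omega_\nu$). With that weaker bound your boundary term at $y=\alpha$ is $\alpha^3(\alpha^2-\nu^2)^{-5/4}$, which exceeds the target by the factor $\alpha(\alpha^2-\nu^2)^{-1/2}$; near the lower end of the allowed range, $\alpha=\nu+\nu^{1/3+\epsilon}$, this factor is about $\nu^{1/3-\epsilon/2}$, so the estimate fails. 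Your fallback — a second integration by parts to trade $J_\nu'$ for $J_\nu$ — repairs the remaining \emph{integral} (I checked: the new boundary and integral terms are then within the target), but it cannot touch the boundary terms $\bigl[y^3J_\nu'(y)/(y^2-\nu^2)\bigr]_\alpha^\beta$ already produced by the first integration by parts, which still require the strong bound on $J_\nu'(\alpha)$. The bound you want is true — it is the leading term of the Debye/Langer expansion of $J_\nu'(\nu\sec\beta)$, whose amplitude is $(2/(\pi\nu)\cdot\sin2\beta)^{1/2}=(2/\pi)^{1/2}(z^2-\nu^2)^{1/4}/z$ (see \cite[\S 8.41]{watson}), or alternatively Olver's uniform expansion of $J_\nu'$ in terms of $\Ai'$ — so your proof is completable by importing that classical fact; but as written, the one step you identify as ``the only real technical input'' is neither available in the paper's toolkit nor derivable in the way you suggest. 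The paper's approach avoids the issue entirely because the first-derivative test needs only the phase $\omega'$ and the amplitude of $J_\nu$ itself, never $J_\nu'$.
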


\begin{proof}
Lemma \ref{besselasymptotics} gives an asymptotic expansion of \(J_\nu(y)\) valid for all \(y\in[\alpha,\beta]\). Indeed, since \(\alpha\geq \nu+\nu^{1/3+\epsilon}\), (\ref{bessel3trunc}) shows
\begin{multline}\label{bessasympreplaced}
\int_\alpha^\beta yJ_\nu(y)dy=\sqrt{\frac2\pi}\int_\alpha^\beta y(y^2-\nu^2)^{-1/4}\cos\omega(y)dy+\mathcal O\Big(\int_\alpha^\beta y^3(y^2-\nu^2)^{-7/4}dy\Big)\\
=\frac{1}{\sqrt{2\pi}}\int_\alpha^\beta y (y^2-\nu^2)^{-1/4}(e^{i\omega(y)}+e^{-i\omega(y)})dy +\mathcal O(\alpha^2(\alpha^2-\nu^2)^{-3/4}).
\end{multline}
It is a standard matter to bound the oscillatory integrals above. Recall (\ref{omega'}), which states \(\omega'(y)=(y^2-\nu^2)^{1/2}/y\). Applying \cite[Lemma 4.3]{titchmarsh} (possibly after partitioning the interval of integration into two subintervals on which \(y^2(y^2-\nu^2)^{-3/4}\) is monotonic) produces the bound
\[\int_\alpha^\beta y(y^2-\nu^2)^{-1/4}e^{\pm i\omega(y)} dy\ll \alpha^2(\alpha^2-\nu^2)^{-3/4}.\]
The lemma follows immediately upon replacing this estimate in (\ref{bessasympreplaced}).
\end{proof}

\begin{proof}[Proof of Theorem \ref{thmmean}, part \ref{meanii}]
Our assumption \(k^2/(32\pi^2-1)\leq x\leq k^2/(16\pi^2+1)\) implies \([k-k^{1/2},k+k^{1/2}]\subset [4\pi\sqrt{x},4\pi\sqrt{2x}]\). We thus deduce from Lemma \ref{vorprop2} that
\begin{equation}\label{splitting402}
\langle \mathcal S(x,f)\rangle =\frac{(-1)^{k/2}}{4\pi}\Big\{\int_{4\pi\sqrt{x}}^{k-k^{1/2}}+\int_{k-k^{1/2}}^{k+k^{1/2}}+\int_{k+k^{1/2}}^{4\pi\sqrt{2x}}\Big\}yJ_{k-1}(y)dy+O(x^{1/2}k^{-1+\epsilon}).
\end{equation}
The first of these integrals is easily bounded using (\ref{bessel1}) of Lemma \ref{besselbounds}, which shows
\[y\leq k-k^{1/2}\implies J_{k-1}(y)\ll e^{-k^{1/6}}\implies \int_{4\pi\sqrt{x}}^{k-k^{1/2}}yJ_{k-1}(y)dy\ll e^{-k^{1/6}/2}.\]
So (\ref{splitting402}) shows
\begin{equation*}
\langle \mathcal S(x,f)\rangle =\frac{(-1)^{k/2}}{4\pi}\int_{k-k^{1/2}}^{k+k^{1/2}}yJ_{k-1}(y)dy+\frac{(-1)^{k/2}}{4\pi}\int_{k+k^{1/2}}^{4\pi\sqrt{2x}}yJ_{k-1}(y)dy+O(x^{1/2}k^{-1+\epsilon}).
\end{equation*}
The former integral is equal to \(k+\mathcal O(k^{7/8})\) by Lemma \ref{transrangeint}. By Lemma \ref{oscillrangeint}, the latter integral is
\begin{equation*}
\int_{k+k^{1/2}}^{4\pi\sqrt{2x}}yJ_{k-1}(y)dy\ll k^2((k+k^{1/2})^2-k^2)^{-3/4}\ll k^{7/8}.
\end{equation*}
(Note our assumption \(x\leq k^2/(16\pi^2+1)\) ensures \(4\pi\sqrt{2x}\leq 2(k+k^{1/2})\)).
\end{proof}

\section{The Second Moment: Proof of Theorem \ref{thmvarsimple}}\label{secvar}

Our next goal is to evaluate the second moment of the sums. This is similar but more involved than the calculations for the first moment in the previous section. We will prove the following, more precise version of Theorem \ref{thmvarsimple}.

\begin{theorem}\label{thmvar}
Let \(\epsilon>0\). We have the following asymptotics for the second moment of the sums \(\mathcal S(x,f)\).
\begin{enumerate}[label=(\roman*)]
    \item \label{vari} If \(x\leq k/(32\pi)\), then 
    \[\langle \mathcal S(x,f)^2\rangle =x+\mathcal O(1).\]
    \item \label{varii}
    If \(k/(32\pi)\leq x\leq k^{1+\epsilon}\), then
    \begin{equation*}
    \langle \mathcal S(x,f)^2\rangle=x+\frac{(-1)^{k/2}}{2\pi}L\Big(\frac{k}{4\pi x}\Big)k(1+\mathcal O(k^{-1/8}))+\mathcal O_\epsilon(k^{2/3+4\epsilon}),
    \end{equation*}
    where \(L\) is the function defined in (\ref{ldef}). 
    \item \label{variii} If \(k^{1+\epsilon}\leq x\leq k^2\), then\footnote{This result only provides an asymptotic if \(x=o( k^2/\log^6k)\).}
    \[\langle \mathcal S(x,f)^2\rangle =x+\mathcal O_\epsilon\Big(\frac{x^{3/2}\log^3 k}{k}\Big).\]
\end{enumerate}
\end{theorem}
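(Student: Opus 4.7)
The plan is to apply the Petersson trace formula (Lemma \ref{trace}) to
\[
\langle \mathcal{S}(x,f)^2\rangle = \sum_{x<m,n\leq 2x}\langle \lambda_f(m)\lambda_f(n)\rangle,
\]
yielding a diagonal $\#\{x<n\leq 2x\}=x+O(1)$ (which supplies the leading $x$ in every part) plus an off-diagonal
\[
\mathrm{OD} = 2\pi(-1)^{k/2}\sum_{c\geq 1}c^{-1}\sum_{x<m,n\leq 2x} S(m,n;c)\, J_{k-1}\Bigl(\frac{4\pi\sqrt{mn}}{c}\Bigr).
\]
Since the Bessel argument lies in $(4\pi x/c, 8\pi x/c]$ and $J_{k-1}$ is effectively concentrated near its transition at $k$, only $c\leq 8\pi x/k$ contribute non-negligibly. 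Part \ref{vari} then follows immediately: when $x\leq k/(32\pi)$ the Bessel argument is at most $k/4$ for every $c$, so Corollary \ref{averages} shows that each off-diagonal pairing is $\ll \exp(-k^{3/5})$, and summing over the $O(x^2)$ pairs gives $\mathrm{OD}\ll 1$.

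For parts \ref{varii} and \ref{variii} I would first replace the sharp cutoff in one factor of $\mathcal{S}$ by $w_\Delta(n/x)$ from Definition \ref{wdef} (with $\Delta$ a suitable small power of $k$, respectively of $x/k$), incurring a Shiu-type error from (\ref{smoothing}). For each relevant $c$ I then open the Kloosterman sum and apply Poisson summation in the smoothed variable. The Bessel differentiation identity together with the integration-by-parts Lemma \ref{bkyibp} will confine the relevant Poisson dual frequencies to those close to $a^\ast/c$, and the resulting ``zero-frequency'' integral, after the change of variable $u=4\pi\sqrt{tn}/c$, reduces to
\[
\int_{4\pi\sqrt{xn}/c}^{4\pi\sqrt{2xn}/c} u\, J_{k-1}(u)\, du,
\]
exactly of the form already analysed in Lemmas \ref{transrangeint} and \ref{oscillrangeint}. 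The off-diagonal is thereby converted into a double sum over $c$ and $n$ of such Bessel integrals, weighted roughly by $c/n$.

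For part \ref{varii} only $c=1$ can yield a secondary main term: the Bessel integral equals $k+O(k^{7/8})$ precisely when $n\in[k^2/(32\pi^2 x), k^2/(16\pi^2 x)]\cap (x,2x]$, and summing $k/(4\pi n)$ over this window (splitting at $x=k/(4\pi\sqrt{2})$) produces $\bigl((-1)^{k/2}/(2\pi)\bigr)\,L(k/(4\pi x))\,k\,(1+O(k^{-1/8}))$. Partial-transition boundary $n$, oscillatory-regime $n$, and $c\geq 2$ contributions are each bounded by Lemma \ref{oscillrangeint} and together give the error $O(k^{2/3+4\epsilon})$. For part \ref{variii}, where up to $O(x/k)$ values of $c$ are relevant, only an upper bound is needed; summing the transition estimate $\ll k/(cn)$ for $n$ in the transition window together with the oscillatory bound of Lemma \ref{oscillrangeint} over all admissible $(c,n)$ should produce the claimed $O(x^{3/2}\log^3 k/k)$.

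The main obstacles will be (a) identifying the correct constant $1/(2\pi)$ in front of $L$, which requires careful bookkeeping of the two sub-cases $x\in[k/(8\pi), k/(4\pi\sqrt{2})]$ and $x\in[k/(4\pi\sqrt{2}), k/(4\pi)]$ as well as the partial-transition boundary $n$ at the endpoints of the $n$-window, and (b) uniform control in part \ref{variii} over the many relevant values of $c$ with only a $\log^3 k$ loss, which forces use of the oscillatory-regime bound of Lemma \ref{oscillrangeint} rather than trivial triangle-inequality estimates.
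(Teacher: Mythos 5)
Part \ref{vari} matches the paper exactly, and your derivation of the secondary main term in part \ref{varii} is a legitimate variant: the paper applies Poisson summation in \emph{both} variables and obtains the function $L$ from the inner integral $\int_{\max(1,y^2/2)}^{\min(2,y^2)}dz/z$, whereas you keep one variable as a sharp sum and recover $L$ from $\sum_{n\in W}1/n$ over the window $W=[k^2/(32\pi^2x),k^2/(16\pi^2x)]\cap(x,2x]$; both give the constant $1/(2\pi)$, and your bookkeeping of the partial-transition boundary $n$ (contributing $O(k^{2/3})$) is consistent. The genuine gap in part \ref{varii} is your claim that the $c\geq2$ contributions ``are bounded by Lemma \ref{oscillrangeint}.'' For $c\geq2$ there is no zero dual frequency (the congruence $\tilde m\equiv -a^*\Mod c$ forces $\tilde m=\pm1$), so the surviving integrals carry an extra linear phase: after the change of variables one faces integrals of the shape $\int y^{1/2}J_{k-1}(y)e^{\pm iy}\,dy$ rather than $\int yJ_{k-1}(y)\,dy$. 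Lemma \ref{oscillrangeint} does not apply, because for one choice of sign the combined phase derivative is $1-\omega'(y)=1-(1-\kappa^2/y^2)^{1/2}\approx \kappa^2/(2y^2)$, which is nearly stationary for large $y$; one needs the separate first-derivative analysis of the paper's Lemma \ref{errortermlem} (and the stationary-phase reduction of Lemma \ref{lemunsmooth}) to get the bound $\ll k^{2/3}$ per $c$. The same issue affects the $\tilde m=\pm1$ frequencies at $c=1$, and the retained sharp $n$-sum additionally carries phases $e(\pm n/c)$ that you do not address.

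Part \ref{variii} is where the proposal breaks down. The paper explicitly abandons the Poisson route there because its errors become overwhelming: already the unsmoothing and stationary-phase errors are of size $\asymp x^2k^{-4/3}$ per modulus $c$, and summing over the $\asymp x/k$ relevant moduli gives $\asymp x^3k^{-7/3}$, which exceeds the main term $x$ once $x\gg k^{7/6}$ and certainly exceeds the target $O(x^{3/2}\log^3k/k)$ throughout most of the range $k^{1+\epsilon}\leq x\leq k^2$. Moreover, for each of the $\asymp x/k$ moduli with $c\asymp 4\pi x/k$ the Bessel transition window meets $(x,2x]$, and the putative transition contributions sum to $\asymp k$; showing they are in fact absent requires the extra-phase suppression above, not Lemma \ref{oscillrangeint}. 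The paper's actual argument for part \ref{variii} is entirely different: it first applies the Vorono\"i formula (Lemma \ref{vorprop}) to shorten the sums to effective length $k^2/x\leq k^{1-\epsilon/2}$, so that Corollary \ref{averages} makes the off-diagonal negligible, and then evaluates the resulting diagonal $4\pi^2x^2\sum_n\tilde w(nx/(k^2+\Delta^2))^2$ via Mellin inversion, a Plancherel/Hankel-inversion identity, and the second moment of $\zeta$ (Lemmas \ref{diagterms} and \ref{zetasecmom}) --- this is the source of the $\log^3k$ in the error term, which your approach has no mechanism to produce. You would need to adopt this (or some comparably different) strategy for part \ref{variii}.
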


The basic idea is to expand the sums 
\begin{equation}\label{expand}
\mathcal S(x,f)^2 = \sum_{x<m,n\leq 2x} \lambda_f(n)\lambda_f(m),
\end{equation}
and then handle the resulting averages \(\langle \lambda_f(n)\lambda_f(m)\rangle\) using the Petersson trace formula. This is trivial when \(x\leq k/(32\pi)\):
\begin{proof}[Proof of Theorem \ref{thmvar}, part \ref{vari}]
Given \(x\leq k/(32\pi)\), one has \(4\pi\sqrt{mn}\leq k/4\) for all integers \(n,m\leq 2x\). Consequently, one may apply the Petersson trace formula in the form of Corollary \ref{averages} to see 
\[\langle \mathcal S(x,f)^2\rangle=\sum_{x<m,n\leq 2x}\langle \lambda_f(n)\lambda_f(m)\rangle = \sum_{x<m,n\leq 2x}\delta_{mn}+\mathcal O(x^2\exp(-k^{3/5}))=x+\mathcal O(1).\]
\end{proof}

The off-diagonal contribution to the second moment is
\begin{equation}\label{offdiagexplainer}
2\pi (-1)^{k/2} \sum_{x<m,n\leq 2x}\sum_{c\geq1} c^{-1}S(m,n;c)J_{k-1}\Big(\frac{4\pi\sqrt{mn}}{c}\Big).
\end{equation}
Roughly speaking, the only terms contributing to the sum (\ref{offdiagexplainer}) are those with \(c\leq 8\pi x/k\), since the Bessel functions \(J_{k-1}(4\pi \sqrt{mn}/c)\) are negligibly small if \(c> 8\pi x/k\).
(Above, it was proved that the off-diagonal contributes a negligible amount to the second moment under the condition \(x\leq k/(32\pi)\).)
When \(k/(8\pi)\leq x\leq k/(4\pi)\), we expect the off-diagonal contribution to be large, since in this range of \(x\) only the \(c=1\) term contributes to (\ref{offdiagexplainer}), and this term can be large.
This is because one has \(4\pi \sqrt{mn}\approx k\) for some \(x<m, n\leq 2x\). 
For these \(n, m\), the Bessel function \(J_{k-1}(4\pi \sqrt{mn})\) is close to its peak, and since also \(S(m,n;1)=1\) it is reasonable to expect the \(c=1\) term to be large. 
However, once \(x\geq k/(4\pi)\), many different terms in the \(c\)-sum contribute to (\ref{offdiagexplainer}). 
In this range, many of the Bessel functions \(J_{k-1}(4\pi\sqrt{mn}/c)\) are in their oscillatory regime, and we also expect cancellation from the Kloosterman sums in the \(c>1\) terms.
Consequently, when \(x>k/(4\pi)\) we expect the off-diagonal to be relatively small. 

This heuristic explains parts \ref{varii} and \ref{variii} of Theorem \ref{thmvar}.
The idea behind the proof of part \ref{varii} is to apply Poisson summation to the sums over \(n\) and \(m\) in (\ref{offdiagexplainer}), and treat the resulting integrals of Bessel functions in the same way as in Section \ref{secmean}. This will be done in Section \ref{thmonepart2}.
In the range of part \ref{variii}, where \(x\geq k^{1+\epsilon}\), the errors introduced by this method quickly become overwhelming. 
Part \ref{variii} is proved in Section \ref{subsecpart3}. 
The key is to apply the Vorono\"i type summation formula given in Lemma \ref{vorprop} before the Petersson trace formula. 
This transforms the sums (\ref{expand}) to have effective length \(k^2/x\), which is now considerably smaller than \(x\).

\subsection{Proof of Theorem \ref{thmvar}, part \ref{varii}}\label{thmonepart2}

In the range \(k/(32\pi)\leq x\leq k^{1+\epsilon}\), we compute the second moment using the following simple application of the Petersson trace formula. With a view to applying the Poisson summation formula later on, it is first convenient to smooth the sums.

\begin{lemma}\label{insplit4lem}
Let \(0<\epsilon\leq 1/100\), and assume \(k/(32\pi)\leq x\leq k^{1+\epsilon}\). Set \(\Delta=k^{1-\epsilon}\), and let \(w=w_\Delta\) be the associated smooth function given in (\ref{wdef}). Then 
\begin{equation*}
\langle \mathcal S(x,f)^2\rangle =x+(\mathrm{OD})+\mathcal O((x+(\mathrm{OD}))^{1/2}k^{3\epsilon})+\mathcal O(k^{6\epsilon}),
\end{equation*}
where \((\mathrm{OD})\) denotes the off-diagonal contribution given by
\begin{equation}\label{offdiag100}
(\mathrm{OD})=2\pi (-1)^{k/2}\sum_{n,m\geq1}w\Big(\frac{n}{x}\Big)w\Big(\frac{m}{x}\Big) \sum_{c\geq 1}c^{-1} S(m,n;c) J_{k-1}\Big(\frac{4\pi\sqrt{mn}}{c}\Big).
\end{equation}
\end{lemma}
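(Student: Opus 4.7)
The plan is to smooth the sharp-cutoff sums, apply the Petersson trace formula, and carefully track the errors introduced by the smoothing. The computation is essentially routine once the parameters are chosen correctly.

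First, I would approximate $\mathcal S(x,f)$ by the smoothed sum $S_w(x,f)\coloneqq \sum_{n\geq 1} \lambda_f(n) w(n/x)$, exactly as in the proof of Lemma \ref{vorprop}. Writing $E(f)\coloneqq \mathcal S(x,f) - S_w(x,f)$, Deligne's bound together with Shiu's estimate on sums of the divisor function (applicable since $\Delta=k^{1-\epsilon}\leq x^{1-\epsilon/10}$ in our range) gives the pointwise bound $|E(f)| \ll x\log x / \Delta$, uniformly in $f$. With our choice $\Delta = k^{1-\epsilon}$ and the assumption $x \leq k^{1+\epsilon}$, this becomes $|E(f)| \ll k^{2\epsilon}\log k \ll k^{3\epsilon}$.

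Next, I would expand
\[
\mathcal S(x,f)^2 = S_w(x,f)^2 + 2\,S_w(x,f)\,E(f) + E(f)^2
\]
and take the harmonic average. The $E^2$ term contributes $O(k^{6\epsilon})$ after using $\langle 1 \rangle = 1 + O(e^{-k})$. The cross term is bounded by $\max_f |E(f)| \cdot \langle |S_w(x,f)|\rangle$; an application of Cauchy--Schwarz against $\langle 1\rangle$ then yields $O\bigl(k^{3\epsilon} \langle S_w(x,f)^2\rangle^{1/2}\bigr)$.

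It remains to compute $\langle S_w(x,f)^2\rangle$. I would expand the square and apply the Petersson trace formula (Lemma \ref{trace}) to each $\langle \lambda_f(n)\lambda_f(m)\rangle$. The diagonal contribution is $\sum_{n\geq 1} w(n/x)^2$; using that $w$ is supported on $[1,2]$, is bounded by $1$, and equals $1$ except on two boundary intervals of length $\Delta^{-1}$, standard Riemann-sum comparison gives
\[
\sum_{n\geq 1} w(n/x)^2 = x\int_1^2 w(t)^2\,dt + O(1) = x + O(x/\Delta) + O(1) = x + O(k^{2\epsilon}).
\]
The off-diagonal contribution is, by construction, exactly the quantity $(\mathrm{OD})$ appearing in (\ref{offdiag100}). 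Hence $\langle S_w(x,f)^2\rangle = x + (\mathrm{OD}) + O(k^{2\epsilon})$, and substituting this into the decomposition above gives the claimed estimate after absorbing the smaller errors.

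There is no real obstacle here; the only point that requires minor care is the final book-keeping. To replace $\langle S_w^2\rangle^{1/2}$ by $(x+(\mathrm{OD}))^{1/2}$ in the cross-term bound, one uses that $\langle S_w^2\rangle$ is nonnegative, so $x+(\mathrm{OD}) \geq -O(k^{2\epsilon})$, and then the bound $\langle S_w^2\rangle^{1/2} \ll (x+(\mathrm{OD}))^{1/2} + k^\epsilon$ suffices, with the extra $k^{3\epsilon}\cdot k^\epsilon = k^{4\epsilon}$ being absorbed into $O(k^{6\epsilon})$.
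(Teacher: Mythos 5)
Your proposal is correct and follows essentially the same route as the paper: smooth with $w_\Delta$, expand the square, apply the Petersson trace formula so that the diagonal gives $x+\mathcal O(k^{2\epsilon})$ and the off-diagonal is exactly $(\mathrm{OD})$, and control the cross term by Cauchy--Schwarz. Your explicit handling of the replacement of $\langle S_w^2\rangle^{1/2}$ by $(x+(\mathrm{OD}))^{1/2}$ is a minor bookkeeping point the paper leaves implicit, and it is handled correctly.
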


\begin{remark}
We fix \(0<\epsilon\leq 1/100\), assume \(k/(32\pi)\leq x\leq k^{1+\epsilon}\), and take \(\Delta=k^{1-\epsilon}\) and \(w=w_\Delta\) (as above) throughout the rest of this section (§\ref{thmonepart2}). All the following lemmas (in §\ref{thmonepart2}) hold under these implicit assumptions.
\end{remark}

\begin{proof}
Smoothing the sums with the function \(w=w_\Delta\) (see (\ref{smoothing})), for \(f\in\mathcal B_k\) we obtain
\[\mathcal S(x,f)=\sum_{n\geq1}\lambda_f(n)w\Big(\frac{n}{x}\Big)+\mathcal O\Big(\frac{x\log x}{\Delta}\Big)=\sum_{n\geq1}\lambda_f(n)w\Big(\frac{n}{x}\Big)+\mathcal O(k^{3\epsilon}).\]
(Since \(x\leq k^{1+\epsilon}\).) We now compute
\begin{equation}\label{initialsplitting4v0}
\langle \mathcal S(x,f)^2\rangle =\sum_{n,m\geq1}\langle \lambda_f(n)\lambda_f(m)\rangle w\Big(\frac{n}{x}\Big)w\Big(\frac{m}{x}\Big) 
+\mathcal O\Big(\Big\langle \Big|\sum_{n\geq1}\lambda_f(n)w\Big(\frac{n}{x}\Big)\Big|\cdot k^{3\epsilon}\Big\rangle\Big)+\mathcal O(k^{6\epsilon}).
\end{equation}
The Petersson trace formula (Lemma \ref{trace}) shows
\begin{equation*}
\sum_{n,m\geq1}\langle \lambda_f(n)\lambda_f(m)\rangle w\Big(\frac{n}{x}\Big)w\Big(\frac{m}{x}\Big)=\sum_{n\geq1}w\Big(\frac{n}{x}\Big)^2+(\mathrm{OD}).
\end{equation*}
Moreover, (from the properties of \(w\) given in (\ref{wdef})) one easily has
\[\sum_{n\geq1}w\Big(\frac nx\Big)^2=x+\mathcal O\Big(\frac x\Delta\Big)=x+\mathcal O(k^{2\epsilon}).\]
The lemma now follows from (\ref{initialsplitting4v0}), upon applying the Cauchy-Schwarz inequality to bound the first error term appearing there.
\end{proof} 

The remainder of this section is devoted to evaluating the off-diagonal terms \((\mathrm{OD})\). The key idea is to interchange the order of summation, and then apply Poisson summation to the \(n\) and \(m\) sums in (\ref{offdiag100}). It turns out that the only significant contribution comes from the zero frequency associated to the \(c=1\) term on the dual side, which contributes roughly 
\[\int_x^{2x}\int_x^{2x}J_{k-1}(4\pi\sqrt{uv})dudv.\]
Since the Bessel function \(J_{k-1}(z)\) is negligible when \(z\leq k-k^{1/3+\epsilon}\), and oscillatory when \(z\geq k+k^{1/3+\epsilon}\) (which leads to a lot of cancellation), the above integral is small unless the transition regime of the Bessel function (which is roughly \([k-k^{1/3},k+k^{1/3}]\)) coincides with the region of integration. Since the argument of the Bessel function is \(4\pi\sqrt{uv}\in[4\pi x, 8\pi x]\), the secondary main term in part \ref{varii} of Theorem \ref{thmvar} appears only if \(k/(8\pi)\leq x\leq k/(4\pi)\).

This strategy is very similar to work of Hough \cite{hough}, in which an analogous application of Poisson summation is used to prove a second moment estimate for the \(L\)-functions \(L(s,f)=\sum_{n\geq1}\lambda_f(n)n^{-s}\).

\subsubsection{Poisson Summation} 

An application of Poisson summation yields the following. 
\begin{lemma} \label{initialpoisson}
The off-diagonal terms (\ref{offdiag100}) are given by 
\begin{multline*}
(\mathrm{OD})=2\pi (-1)^{k/2}\sum_{c\leq 32\pi x/k}c^{-1}\sum_{(\tilde n,\tilde m)\in\Gamma_c}\\
\int_0^\infty\int_0^\infty w\Big(\frac{u}{x}\Big)w\Big(\frac{v}{x}\Big)J_{k-1}\Big(\frac{4\pi\sqrt{uv}}{c}\Big)e\Big(\frac{\tilde nu+\tilde mv}{c}\Big)dudv
+\mathcal O(k^{-1000}).
\end{multline*}
The indexing sets \(\Gamma_c\) are given by:
\begin{itemize}
\item \(\Gamma_1=\{(-1,-1), (-1,0), (-1,1), (0,-1), (0,0), (0,1), (1,-1), (1,0), (1,1)\}.\)
\item \(\Gamma_2=\{(-1,-1), (-1,1), (1,-1), (1,1)\}.\)
\item For \(c\geq3\), \(\Gamma_c=\{(-1,-1), (1,1)\}\).
\end{itemize}
\end{lemma}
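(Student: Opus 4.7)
The plan is to apply Poisson summation to the inner $n$- and $m$-sums in \eqref{offdiag100}, then truncate both the $c$-sum and the resulting dual frequency sum.

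First I would truncate the $c$-sum to $c\leq 32\pi x/k$. For $c$ above this threshold and $n,m\in\supp w(\cdot/x)$, the Bessel argument satisfies $4\pi\sqrt{mn}/c\leq 8\pi x/c<k/4$, so the small-argument bound \eqref{bessboundsmallarg} gives $J_{k-1}(4\pi\sqrt{mn}/c)\ll(mn/c^2)e^{-14k/13}$. Combined with the trivial estimate $|S(m,n;c)|\leq c$, this tail contributes $\ll x^4 e^{-k}\ll k^{-1000}$ since $x\leq k^{1+\epsilon}$.

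Next, for each remaining $c$ I would expand $S(m,n;c)=\sum_{(a,c)=1}e((a^*m+an)/c)$ and apply Poisson summation modulo $c$ to both inner sums. The standard identity $\sum_{n\in\mathbb{Z}}F(n)e(an/c)=\sum_{\tilde n\equiv -a\Mod c}\hat F(\tilde n/c)$, where $\hat F(\xi)=\int F(u)e(-\xi u)du$, applied twice and combined with the outer sum over $(a,c)=1$, converts the double sum into a sum over dual variables $(\tilde n,\tilde m)$ subject to $(\tilde n,c)=1$ and (after relabelling $\tilde n,\tilde m\mapsto -\tilde n,-\tilde m$ to match the sign convention in the statement) $\tilde n\tilde m\equiv 1\Mod c$, each weighted by the integral $I_c(\tilde n,\tilde m)$ appearing in the lemma.

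The third step is to show $I_c(\tilde n,\tilde m)\ll k^{-1000}$ unless $|\tilde n|,|\tilde m|\leq 1$. Assuming $|\tilde n|\geq 2$, I would integrate by parts $A$ times in the $u$-variable, using the phase $e(\tilde n u/c)$. Each IBP gains a factor $c/(2\pi|\tilde n|)$, while derivatives of $w(u/x)$ cost $\Delta/x$ per application and derivatives of $J_{k-1}(4\pi\sqrt{uv}/c)$ cost $\ll 1/c$ per application (using $|J_n(y)|\leq 1$ together with the chain-rule factor $\sqrt{v/u}/c$). Since $x\leq k^{1+\epsilon}$ and $\Delta=k^{1-\epsilon}$ give $32\pi x/k\leq x/\Delta$ for $k$ large, the $1/c$ term dominates, yielding $|I_c(\tilde n,\tilde m)|\ll_A x^2|\tilde n|^{-A}\ll k^{-1000}$ for $A$ sufficiently large; the case $|\tilde m|\geq 2$ is symmetric by IBP in $v$. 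Enumerating the surviving pairs then matches $\Gamma_c$: for $c=1$ all $9$ pairs in $\{-1,0,1\}^2$ qualify (the constraints being trivial); for $c=2$ coprimality forces $\tilde n,\tilde m$ odd, leaving $\{\pm 1\}^2$; for $c\geq 3$ the congruence $\tilde n\tilde m\equiv 1\Mod c$ with $|\tilde n\tilde m|\leq 1$ forces $\tilde n\tilde m=1$, yielding $\{(1,1),(-1,-1)\}$.

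The main obstacle is the integration-by-parts bookkeeping: one must verify that derivatives of the Bessel function (via the chain rule) do not offset the gain $c/|\tilde n|$ per IBP. This requires the uniform bound $c\leq x/\Delta$ throughout the truncation range $c\leq 32\pi x/k$, which is exactly what the parameter choices $\Delta=k^{1-\epsilon}$ and $x\leq k^{1+\epsilon}$ deliver.
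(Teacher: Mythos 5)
Your overall strategy (truncate the $c$-sum via the small-argument Bessel bound, open the Kloosterman sums, apply Poisson summation in $n$ and $m$, truncate the dual variables, and enumerate the surviving congruence classes) is the same as the paper's, and your first, second and fourth steps are fine. However, the third step --- the truncation of the dual sum --- has a genuine gap. You treat $J_{k-1}(4\pi\sqrt{uv}/c)$ as a slowly varying amplitude whose $u$-derivatives cost $O(1/c)$ each, against the phase $e(\tilde nu/c)$ whose derivative is $2\pi|\tilde n|/c$. The net gain per integration by parts is then only $\asymp 1/|\tilde n|$, so $A$ steps give $|I_c(\tilde n,\tilde m)|\ll_A x^2|\tilde n|^{-A}$. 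For bounded $|\tilde n|\geq 2$ (say $\tilde n=2$) and fixed $A$ this is $\gg k^2 2^{-A}$, which is nowhere near $O(k^{-1000})$; you cannot take $A\asymp\log k$ because the implied constant depends on $A$. The underlying problem is that in the relevant range the Bessel function is itself oscillating with frequency $\asymp 1/c$ in $u$ (its argument varies by $\asymp x/c\asymp k$ over the support), exactly matching the frequency of $e(\tilde nu/c)$ up to the factor $|\tilde n|$, so it cannot be treated as a non-oscillatory weight.

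The paper avoids this by first inserting the integral representation $J_{k-1}(z)=\frac{1}{2\pi}\int_{-\pi}^{\pi}e^{i(k-1)\theta}e^{-iz\sin\theta}\,d\theta$, so that the full $u$-phase becomes $-F(u)/c$ with $F(u)=\tilde nu+2\sqrt{uv}\sin\theta$ and $|\partial F/\partial u|\geq|\tilde n|-\sqrt2$. Applying Lemma \ref{bkyibp} with $R=|\tilde n|/c$, $U=xk^{-1+\epsilon}$, $Y=|\tilde n|x/c$, $Q=x$ then yields the bound $x\{(|\tilde n|x/c)^{-B/2}+(|\tilde n|x/(ck^{1-\epsilon}))^{-B}\}$, and since $x/c\gg k$ and $x/(ck^{1-\epsilon})\gg k^{\epsilon}$ this is a genuine negative power of $k$ for every $|\tilde n|\geq2$, uniformly in $c$. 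To repair your argument you must exhibit the Bessel oscillation in the phase (either via this integral representation, or via the asymptotic expansion (\ref{bessel3trunc}) plus a separate treatment of the transition range); the purely ``amplitude'' bookkeeping you propose does not suffice.
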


\begin{proof}
We begin with the expression (\ref{offdiag100}) for \((\mathrm{OD})\). First, we must truncate the summation over \(c\). Note that if \(c\geq 32\pi x/k\), then we have \(4\pi\sqrt{mn}/c\leq k/4\) for \(m,n\leq 2x\) in the range of summation in (\ref{offdiag100}). The bound (\ref{bessboundsmallarg}) of Lemma \ref{besselbounds} now shows \(J_{k-1}(4\pi\sqrt{mn}/c)\ll mn e^{-14k/13}/c^2\). So (using the trivial bound \(|S(m,n;c)|\leq c\) for the Kloosterman sums) the contribution of \(c\geq 32\pi x/k\) to \((\mathrm{OD})\) is seen to be \(\mathcal O(e^{-k})\). We thus obtain from (\ref{offdiag100})
\begin{multline}\label{offdiag101}
(\mathrm{OD})=2\pi (-1)^{k/2}\sum_{n,m\geq1}w\Big(\frac{n}{x}\Big)w\Big(\frac{m}{x}\Big) \sum_{c\leq 32\pi x/k}c^{-1} S(m,n;c) J_{k-1}\Big(\frac{4\pi\sqrt{mn}}{c}\Big)+\mathcal O(e^{-k})\\
=2\pi (-1)^{k/2}\sum_{c\leq 32\pi x/k} c^{-1}\sum_{\substack{1\leq a\leq c\\(a,c)=1}}\sum_{n,m\geq1} w\Big(\frac{n}{x}\Big)w\Big(\frac{m}{x}\Big)J_{k-1}\Big(\frac{4\pi\sqrt{mn}}{c}\Big)e\Big(\frac{an+a^*m}{c}\Big)+\mathcal O(e^{-k}).
\end{multline}
(In the last line, we opened the Kloosterman sums and interchanged the order of summation.) 
Now splitting the inner sums over \(n,m\) in (\ref{offdiag101}) into congruence classes modulo \(c\), we obtain
\begin{multline}\label{offdiag101v2}
(\mathrm{OD})=2\pi (-1)^{k/2}\sum_{c\leq 32\pi x/k} c^{-1}\sum_{\substack{1\leq a\leq c\\(a,c)=1}}\sum_{\gamma,\delta\Mod c}e\Big(\frac{a\gamma+a^*\delta}{c}\Big)\\
\hspace{4em}\sum_{n',m'\in\mathbb{Z}}w\Big(\frac{\gamma+n'c}{x}\Big)w\Big(\frac{\delta+m'c}{x}\Big)J_{k-1}\Big(\frac{4\pi\sqrt{(\gamma+n'c)(\delta+m'c)}}{c}\Big)+\mathcal O(e^{-k}).
\end{multline}
Poisson summation shows that the inner sum over \(n'\) and \(m'\) is equal to
\begin{multline*}
\sum_{\tilde n,\tilde m\in\mathbb{Z}}\int_{-\infty}^\infty \int_{-\infty}^\infty w\Big(\frac{\gamma+\tilde u c}{x}\Big)w\Big(\frac{\delta+\tilde v c}{x}\Big)J_{k-1}\Big(\frac{4\pi \sqrt{(\gamma+\tilde uc)(\delta+\tilde vc)}}{c}\Big) e(-\tilde n\tilde u-\tilde m\tilde v)d\tilde ud\tilde v\\
=\frac{1}{c^2}\sum_{\tilde n,\tilde m\in\mathbb{Z}}e\Big(\frac{\tilde n\gamma+\tilde m\delta}{c}\Big)\int_0^\infty\int_0^\infty w\Big(\frac{u}{x}\Big)w\Big(\frac{v}{x}\Big)J_{k-1}\Big(\frac{4\pi \sqrt{uv}}{c}\Big)e\Big(-\frac{\tilde nu+\tilde mv}{c}\Big)dudv.
\end{multline*}
In last line of the above, we made the substitutions \(u=\gamma+\tilde u c\) and \(v=\delta+\tilde v c\). Recall \(\supp w=[1,2]\), and therefore the effective region of integration is \(x\leq u,v\leq 2x\). Replacing this in (\ref{offdiag101v2}), we obtain
\begin{align}\label{offdiag101v3}
(\mathrm{OD})&=2\pi (-1)^{k/2}\sum_{c\leq 32\pi x/k} c^{-3}\sum_{\substack{1\leq a\leq c\\(a,c)=1}}\sum_{\tilde n,\tilde m \in\mathbb{Z}}\Big\{\sum_{\gamma\Mod c}e\Big(\frac{\gamma(a+\tilde n)}{c}\Big)\Big\}\Big\{\sum_{\delta\Mod c}e\Big(\frac{\delta(a^*+\tilde m)}{c}\Big)\Big\}\\
&\hspace{7em}\int_0^\infty\int_0^\infty w\Big(\frac{u}{x}\Big)w\Big(\frac{v}{x}\Big)J_{k-1}\Big(\frac{4\pi \sqrt{uv}}{c}\Big)e\Big(-\frac{\tilde nu+\tilde mv}{c}\Big)dudv+\mathcal O(e^{-k})\nonumber\\
&=2\pi (-1)^{k/2}\sum_{c\leq 32\pi x/k} c^{-1}\sum_{\substack{1\leq a\leq c\\(a,c)=1}}\sum_{\substack{\tilde n,\tilde m\in\mathbb{Z}\\\tilde n\equiv -a\Mod c\\\tilde m\equiv -a^*\Mod c}}\nonumber\\
&\hspace{7em}\int_0^\infty \int_0^\infty w\Big(\frac{u}{x}\Big)w\Big(\frac{v}{x}\Big)J_{k-1}\Big(\frac{4\pi\sqrt{uv}}{c}\Big)e\Big(-\frac{\tilde nu+\tilde mv}{c}\Big)dudv+\mathcal O(e^{-k})\nonumber.
\end{align}

It remains to truncate the dual sums over \(\tilde n,\tilde m\) in (\ref{offdiag101v3}). To do so, we use the integral representation (\ref{besselintrep2}) for the Bessel function. This shows that for any integers \(\tilde n\) and \(\tilde m\),
\begin{multline*}
\int_0^\infty\int_0^\infty w\Big(\frac{u}{x}\Big)w\Big(\frac{v}{x}\Big)J_{k-1}\Big(\frac{4\pi\sqrt{uv}}{c}\Big)e\Big(-\frac{\tilde nu+\tilde mv}{c}\Big)dudv\\
=\frac{1}{2\pi}\int_{-\pi}^\pi e^{i(k-1)\theta}\int_0^\infty\int_0^\infty w\Big(\frac{u}{x}\Big)w\Big(\frac{v}{x}\Big)e\Big(-\frac{\tilde nu+\tilde mv+2\sqrt{uv}\sin\theta}{c}\Big) dudvd\theta.
\end{multline*}
Consider the innermost integral over \(u\). Writing \(F(u)=F(u,v,\theta)=\tilde nu+2\sqrt{uv}\sin\theta\), this is
\[\int_0^\infty w\Big(\frac{u}{x}\Big) e\Big(-\frac{F(u)}{c}\Big)du.\]
Note that for any \(\theta\) and \(x\leq u,v\leq 2x\) in the range of integration, one has 
\[\frac{\partial F}{\partial u}=\tilde n+\sqrt{\frac{v}{u}}\sin\theta \implies \Big|\frac{\partial F}{\partial u}\Big|\geq |\tilde n|-\sqrt{2}.\]
Moreover, one observes that for \(u,v\) in this range
\[\frac{\partial^jF}{\partial u^j}\ll_j (|\tilde n|+1) x^{1-j}, \text{ and } \frac{\partial^j}{\partial u^j}w\Big(\frac{u}{x}\Big)\ll_j \Big(\frac{k^{1-\epsilon}}{x}\Big)^{j} \text{ for } j=0,1,2,\ldots\]
(Recall \(w^{(j)}\ll_j \Delta^{j}=k^{(1-\epsilon)j}\) by our construction.) Consequently if \(|\tilde n|\geq2\), we apply Lemma \ref{bkyibp} (repeated integration by parts) with \(X=1\), \(R=|\tilde n|/c\), \(U=xk^{-1+\epsilon}\), \(Y=|\tilde n|x/c\) and \(Q=x\) to obtain the bound 
\begin{equation}\label{removey} 
\int_x^{2x} w\Big(\frac{u}{x}\Big) e\Big(-\frac{F(u)}{c}\Big)du\ll_B x\Big\{\Big(\frac{|\tilde n|x}{c}\Big)^{-B/2}+\Big(\frac{|\tilde n|x}{ck^{1-\epsilon}}\Big)^{-B}\Big\},
\end{equation}
valid for any integer \(B\geq 0\). 
Note that for \(c\leq 32\pi x/k\) (in the range of summation), \(x/c\geq x/(ck^{1-\epsilon})\geq k^\epsilon/(32\pi)\). 
So taking \(B\) large enough in (\ref{removey}), provided \(|\tilde n|\geq 2\) we have
\[\int_0^\infty w\Big(\frac{u}{x}\Big) e\Big(-\frac{F(u)}{c}\Big)du\ll \tilde n^{-2}k^{-1100}, \text{ say}.\]
As a result, the contribution to (\ref{offdiag101v3}) of all terms with \(|\tilde n|\geq2\) or (by symmetry) \(|\tilde m|\geq2\) is seen to be \(\ll k^{-1000}\). So we may rewrite (\ref{offdiag101v3}) as 
\begin{multline}\label{offdiag101v4}
(\mathrm{OD})=2\pi (-1)^{k/2}\sum_{c\leq 32\pi x/k} c^{-1}\sum_{\substack{1\leq a\leq c\\(a,c)=1}}
\sum_{\substack{\tilde n,\tilde m\in\{-1,0,1\}\\\tilde n\equiv -a\Mod c\\\tilde m\equiv -a^*\Mod c}}\\
\int_0^\infty \int_0^\infty w\Big(\frac{u}{x}\Big)w\Big(\frac{v}{x}\Big)J_{k-1}\Big(\frac{4\pi\sqrt{uv}}{c}\Big)e\Big(-\frac{\tilde nu+\tilde mv}{c}\Big)dudv+\mathcal O(k^{-1000}).
\end{multline}

Finally, we check the ways in which the conditions \(\tilde n,\tilde m\in\{-1,0,1\}\), \(\tilde n\equiv -a\Mod c\) and \(\tilde m \equiv -a^*\Mod c\) can be met, given any \(c\geq1\) and \(1\leq a\leq c\) with \((a,c)=1\). If \(c=1\) then necessarily \(a=a^*=1\). So we must consider the contribution from all possibilities
\[(\tilde n,\tilde m)\in \Gamma_1=\{(-1,-1),(-1,0),(-1,1),(0,-1),(0,0),(0,1),(1,-1),(1,0),(1,1)\}.\]
If \(c=2\), then once again \(a=a^*=1\) is forced, so we require \(\tilde n,\tilde m\equiv 1\Mod 2\). Hence there is a possible contribution from all
\[(\tilde n,\tilde m)\in\Gamma_2=\{(-1,-1),(-1,1),(1,-1),(1,1)\}.\]
If \(c\geq 3\), then \(\tilde n\equiv -a\Mod c\) and \(\tilde n\in\{-1,0,1\}\) implies \(a\equiv a^*\equiv \pm1\Mod c\). Therefore \(\tilde m\equiv -a^*\equiv \tilde n\Mod c\), and hence \(\tilde n=\tilde m\) (as \(|\tilde n|,|\tilde m|\leq 1\) and \(c\geq 3\)). So when \(c\geq 3\), the only possibilities to consider are 
\[(\tilde n,\tilde m)\in\Gamma_c=\{(-1,-1),(1,1)\}.\]
Restricting the summation in (\ref{offdiag101v4}) to only these values of \((\tilde n,\tilde m)\) gives the lemma.
\end{proof}

In fact, with a little more work it is possible to further truncate the sum in Lemma \ref{initialpoisson}. 

\begin{lemma}\label{initalpoissonsimplified}
We have
\begin{multline*}
(\mathrm{OD})=4\pi (-1)^{k/2}x^2\int_0^\infty yJ_{k-1}(4\pi xy)\int_0^\infty w\Big(\frac{y^2}{z}\Big)\frac{w(z)}{z}dzdy\\
+4\pi (-1)^{k/2} x^2\sum_{c\leq 32\pi x/k}c^{-1}\sum_{\varepsilon=\pm1}\int_0^\infty yJ_{k-1}\Big(\frac{4\pi xy}{c}\Big) \int_0^\infty w\Big(\frac{y^2}{z}\Big)\frac{w(z)}{z} e\Big(\frac{\varepsilon x}{c}\Big(z+\frac{y^2}{z}\Big)\Big)dzdy\\
+\mathcal O(k^{-1000}).
\end{multline*}
\end{lemma}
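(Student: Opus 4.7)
The plan is to perform a change of variables that decouples the Bessel function's argument from the additive phase, then identify which pairs $(\tilde n,\tilde m)\in\Gamma_c$ give the main contributions and show that the rest are negligible.

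First, in each integral appearing in Lemma \ref{initialpoisson}, I would substitute $u=xz$ and $v=xy^2/z$; this is a bijection of $(0,\infty)^2$ onto itself with Jacobian $\partial(u,v)/\partial(z,y)=2x^2y/z$. The Bessel argument $4\pi\sqrt{uv}/c$ becomes $4\pi xy/c$, the amplitude $w(u/x)w(v/x)$ factorises as $w(z)w(y^2/z)$, and the additive phase $(\tilde nu+\tilde mv)/c$ becomes $x(\tilde nz+\tilde my^2/z)/c$. Each integral thus equals
\[2x^2\int_0^\infty yJ_{k-1}\Big(\frac{4\pi xy}{c}\Big)\int_0^\infty \frac{w(z)}{z}w\Big(\frac{y^2}{z}\Big)e\Big(\frac{x}{c}\Big(\tilde nz+\tilde m\frac{y^2}{z}\Big)\Big)dz\,dy.\]

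Two families of pairs yield the main contributions. The pair $(0,0)\in\Gamma_1$ produces the first line of the target, with matching coefficient $2\pi(-1)^{k/2}\cdot 2x^2=4\pi(-1)^{k/2}x^2$. The pairs $(\varepsilon,\varepsilon)$ with $\varepsilon=\pm1$, present in every $\Gamma_c$ for $c\leq 32\pi x/k$, have phase $\varepsilon x(z+y^2/z)/c$, and summing over $c$ and $\varepsilon$ reproduces the second line.

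All remaining pairs---namely $(\pm 1,0)$, $(0,\pm1)$, $(\pm 1,\mp 1)$ in $\Gamma_1$ and $(\pm 1,\mp 1)$ in $\Gamma_2$---must be shown to contribute $O(k^{-1000})$. For each such pair the integrand is supported on $(y,z)\in[1,2]^2$, and the phase $\phi(z)=2\pi x(\tilde nz+\tilde my^2/z)/c$ satisfies $|\phi'(z)|\gg x/c$ throughout this region: for example $\phi'(z)=2\pi x(1+y^2/z^2)/c\geq 2\pi x/c$ when $(\tilde n,\tilde m)=(1,-1)$, $|\phi'(z)|=2\pi x y^2/(cz^2)\geq \pi x/(2c)$ when $(\tilde n,\tilde m)=(0,1)$, and $|\phi'(z)|=2\pi x/c$ when $(\tilde n,\tilde m)=(\pm1,0)$. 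Higher derivatives satisfy $\phi^{(j)}(z)\ll_j x/c$, while the amplitude $w(z)w(y^2/z)/z$ has $j$-th derivative $\ll_j\Delta^j=k^{(1-\epsilon)j}$ on $[1,2]$. Applying Lemma \ref{bkyibp} with $R,Y\asymp x/c$, $Q=1$, $X=1$, $U=\Delta^{-1}$ gives an inner $z$-integral of size $O((x/c)^{-B/2}+(\Delta c/x)^B)$. For $c\in\{1,2\}$ and $x\geq k/(32\pi)$ both summands are $O(k^{-B\epsilon})$; taking $B$ sufficiently large and absorbing the outer factors $x^2|J_{k-1}(\cdot)|\ll k^{2+2\epsilon}$, each bad pair contributes $O(k^{-1000})$.

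The step I expect to require the most care is the case analysis in the third paragraph: verifying uniformly that $|\phi'(z)|\gg x/c$ on $[1,2]^2$ for each of the bad pairs, and confirming that no stationary point lurks inside. Once this is established, the estimate follows directly from Lemma \ref{bkyibp}, and assembling the main-term contributions yields the claimed formula.
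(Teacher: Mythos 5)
Your proposal is correct and follows essentially the same route as the paper: the same change of variables $u=xz$, $v=xy^2/z$ (the paper takes $z=v/x$, $y=\sqrt{uv}/x$, which merely swaps the roles of $\tilde n$ and $\tilde m$ in the phase), the same identification of $(0,0)$ for $c=1$ and $(\pm1,\pm1)$ for all $c$ as the surviving terms, and the same application of Lemma \ref{bkyibp} with $X=1$, $U=\Delta^{-1}$, $R\asymp Y\asymp x/c$, $Q=1$ after checking $|F'(z)|\gg1$ on the support for the remaining pairs. The paper packages your case analysis into the single observation $|F'(z)|=|\tilde m-\tilde n y^2/z^2|\geq 1/2$ for those pairs, but the content is identical.
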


\begin{proof}
We consider the integrals
\[\int_0^\infty\int_0^\infty w\Big(\frac{u}{x}\Big)w\Big(\frac{v}{x}\Big)J_{k-1}\Big(\frac{4\pi\sqrt{uv}}{c}\Big)e\Big(\frac{\tilde nu+\tilde mv}{c}\Big)dudv\]
appearing in Lemma \ref{initialpoisson}. It is convenient to apply the change of variables \(y=\sqrt{uv}/x\) and \(z=v/x\), which shows that the above is equal to 
\[2x^2\int_0^\infty yJ_{k-1}\Big(\frac{4\pi xy}{c}\Big) \int_0^\infty w\Big(\frac{y^2}{z}\Big)\frac{w(z)}{z} e\Big(\frac{x}{c}\Big(\tilde mz+\frac{\tilde ny^2}{z}\Big)\Big)dzdy.\]
Write \(G(z)=G_y(z)=w(y^2/z)w(z)/z\) and \(F(z)=F_y(z;\tilde m, \tilde n)=\tilde mz+\tilde ny^2/z\), and consider the innermost oscillatory integral
\begin{equation*}
\int_0^\infty w\Big(\frac{y^2}{z}\Big)\frac{w(z)}{z} e\Big(\frac{x}{c}\Big(\tilde mz+\frac{\tilde ny^2}{z}\Big)\Big)dz=\int_{\max(1,y^2/2)}^{\min(2,y^2)} G(z)e\Big(\frac{xF(z)}{c}\Big)dz.
\end{equation*}
For \(z\) in the region of integration (i.e. \(\max(1,y^2/2)\leq z\leq \min(2, y^2)\)), we have
\[F'(z)=\tilde m-\tilde ny^2/z^2 \text{ and } G^{(j)}(z)\ll_j k^{(1-\epsilon)j} \text{ for any integer } j\geq0.\]
(Recall \(w^{(j)}\ll_j \Delta^{j}=k^{(1-\epsilon)j}\) by our construction.) Consequently, if 
\begin{equation}\label{bconds}
(\tilde n,\tilde m)\in\{(-1,0),(1,0),(0,-1),(0,1),(-1,1),(1,-1)\},
\end{equation}
and \(z, y^2/z \in\supp w=[1,2]\) (as in the region of integration), then \(|F'(z)|=|\tilde m -\tilde ny^2/z^2|\geq 1/2\). It is also clear that \(F^{(j)}(z)\ll_j 1\) for all integers \(j\geq 0\) (and \(z, y^2/z\in[1,2]\) and \(|\tilde n|, |\tilde m|\leq 1\) as above). We may thus apply Lemma \ref{bkyibp} (with \(X=1\), \(U=k^{-1+\epsilon}\), \(R=x/c\), \(Y=x/c\) and \(Q=1\)) to obtain
\begin{equation}\label{oscillint2}
\int_{\max(1,y^2/2)}^{\min(2,y^2)} G(z)e\Big(\frac{xF(z)}{c}\Big)dz\ll_B \Big(\frac{x}{c}\Big)^{-B/2}+\Big(\frac{x}{ck^{1-\epsilon}}\Big)^{-B} \text{ valid for any integer } B\geq0.
\end{equation}
Since \(c\leq 32\pi x/k\) (in the range of summation in Lemma \ref{initialpoisson}), \(x/c\geq x/(ck^{1-\epsilon})\geq k^\epsilon/(32\pi)\). So upon taking \(B\) large enough in (\ref{oscillint2}), provided \(\tilde n\) and \(\tilde m\) satisfy (\ref{bconds}) we have the bound
\begin{multline*} 
\int_0^\infty\int_0^\infty w\Big(\frac{u}{x}\Big)w\Big(\frac{v}{x}\Big)J_{k-1}\Big(\frac{4\pi\sqrt{uv}}{c}\Big)e\Big(\frac{\tilde nu+\tilde mv}{c}\Big)dudv\\
=2x^2\int_0^\infty yJ_{k-1}\Big(\frac{4\pi xy}{c}\Big) \int_{\max(1,y^2/2)}^{\min(2,y^2)} G(z)e\Big(\frac{xF(z)}{c}\Big)dzdy\ll k^{-1000}.
\end{multline*}
It follows (see Lemma \ref{initialpoisson}) that the contribution to \((\mathrm{OD})\) of those \(\tilde n, \tilde m\) satisfying (\ref{bconds}) is \(\ll k^{-1000}\). So the only terms in Lemma \ref{initialpoisson} that contribute a (possibly) non-negligible amount to \((\mathrm{OD})\) are the \(c=1\) term with \((\tilde n,\tilde m)=(0,0)\) and all the \(c\geq1\) terms with \((\tilde n,\tilde m)=\pm(1,1)\). Restricting the summation in Lemma \ref{initialpoisson} to only these values of \(c, \tilde n\) and \(\tilde m \), and applying the change of variables \(y=\sqrt{uv}/x\), \(z=v/x\) as above completes the proof.
\end{proof}

\subsubsection{Evaluating the Oscillatory Integrals}

Equipped with Lemma \ref{initalpoissonsimplified}, we are now in a position to explicitly evaluate the off-diagonal contribution. The first integral in Lemma \ref{initalpoissonsimplified} (corresponding to \(c=1\) and \((\tilde n, \tilde m)=(0,0)\)) will contribute the secondary main term arising in part \ref{varii} of Theorem \ref{thmvar}. The other terms (corresponding to \((\tilde n,\tilde m)=\pm(1,1)\)) will exhibit some cancellation (owing to the extra exponential terms), and contribute only an error term.

In order to evaluate and bound the integrals appearing in Lemma \ref{initalpoissonsimplified}, it is convenient to unsmooth the integrals -- recall \(w=w_\Delta\) is as given in (\ref{wdef}) with \(\Delta=k^{1-\epsilon}\). The unsmoothing is done in the following lemma.

\begin{lemma}\label{lemunsmooth}
We have
\begin{multline*}
(\mathrm{OD})=\frac{(-1)^{k/2}}{2\pi}\int_0^\infty yL\Big(\frac{y}{4\pi x}\Big)J_{k-1}(y)dy\\
+\frac{(-1)^{k/2}}{2\sqrt{2\pi}}\sum_{c\leq 32\pi x/k}c\sum_{\varepsilon=\pm1} e(\varepsilon/8)\int_{4\pi x/c}^{8\pi x/c} y^{1/2}J_{k-1}(y)e^{i\varepsilon y}dy+\mathcal O(k^{2/3+4\epsilon}).
\end{multline*}
\end{lemma}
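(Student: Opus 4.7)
The plan is to derive the two pieces of the claim by treating separately the non-oscillatory term (from $c=1$, $(\tilde n,\tilde m)=(0,0)$) and the oscillatory terms (from $(\tilde n,\tilde m)=\pm(1,1)$) appearing in Lemma \ref{initalpoissonsimplified}. The non-oscillatory term is handled by directly unsmoothing the inner integral, and each oscillatory term by stationary phase in the $z$-variable.

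For the non-oscillatory term, I would replace $w(y^2/z)w(z)/z$ in the inner integral by the indicator analogue $\mathbf{1}_{[1,2]}(y^2/z)\mathbf{1}_{[1,2]}(z)/z$, reducing it to $\int_{\max(1,y^2/2)}^{\min(2,y^2)}dz/z$. A direct case analysis matching the four pieces of (\ref{ldef}) shows this equals $2L(y)$. The error from this substitution is supported on a set of measure $O(\Delta^{-1})$ and is $O(\Delta^{-1})$ pointwise; combined with the outer factor $4\pi x^2 y|J_{k-1}(4\pi xy)|$, the bound $|J_{k-1}|\ll k^{-1/3}$ from Lemma \ref{besselbounds}, and the assumption $x\leq k^{1+\epsilon}$, this is comfortably absorbed in the $O(k^{2/3+4\epsilon})$ budget. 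The substitution $u=4\pi xy$ then collapses the prefactor to $\frac{(-1)^{k/2}}{2\pi}$, matching the first claimed term exactly.

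For each oscillatory term, the phase $\phi(z)=z+y^2/z$ of the inner $z$-integral has a unique stationary point at $z=y$, which lies in the effective support $[\max(1,y^2/2),\min(2,y^2)]$ precisely for $y\in[1,2]$. I would split the $z$-integral into a small fixed-radius neighbourhood of $z=y$ (on which $w(z)w(y^2/z)\equiv 1$, provided $y$ is interior to $[1,2]$) and the complement. On the complement, repeated integration by parts via Lemma \ref{bkyibp} gives a negligible contribution, since the large parameter $x/c\geq k/(32\pi)$ beats the $\Delta^j=k^{(1-\epsilon)j}$ growth of the $w$-derivatives. On the neighbourhood, where $G(z)=1/z$ is smooth, stationary phase with $\phi''(y)=2/y$ and $\phi(y)=2y$ produces the leading term
\[\frac{w(y)^2}{y}\Bigl(\frac{cy}{2x}\Bigr)^{1/2}e(\varepsilon/8)\,e(2\varepsilon xy/c),\]
with remainder of order $(x/c)^{-3/2}$. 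After substituting this back into the outer $y$-integral, unsmoothing $w(y)^2$ to $\mathbf{1}_{[1,2]}(y)$, and changing variable to $u=4\pi xy/c$, one obtains $(c/(4\pi x))^{3/2}\int_{4\pi x/c}^{8\pi x/c}u^{1/2}J_{k-1}(u)e^{i\varepsilon u}\,du$, and the outer constants collapse via $4\pi x^2(2x)^{-1/2}(4\pi x)^{-3/2}=1/(2\sqrt{2\pi})$.

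The main obstacle will be to keep every error within the $O(k^{2/3+4\epsilon})$ budget once summed over $c\leq 32\pi x/k$. Two contributions require care: the stationary-phase remainder of size $(x/c)^{-3/2}$, which after the outer $y$-integral and the sum $\sum_c c^{1/2}\ll(x/k)^{3/2}$ gives an error $\ll x^2/k^{3/2}\leq k^{1/2+2\epsilon}$; and the unsmoothing error for $w(y)^2$ in the outer integral, localised near $y\in\{1,2\}$ in a region of length $\Delta^{-1}$, contributing $\ll x^2 k^{-1/2}\Delta^{-1}\leq k^{1/2+3\epsilon}$ after using $\sum_c c^{-1/2}\ll (x/k)^{1/2}$. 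Both fall comfortably within the budget.
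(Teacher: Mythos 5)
Your treatment of the $c=1$, $(\tilde n,\tilde m)=(0,0)$ term matches the paper's (unsmooth on a set of measure $O(\Delta^{-1})$, identify the inner integral with $2L(y)$, rescale), and your bookkeeping of constants for the oscillatory terms is correct. However, your stationary phase argument for the $(\tilde n,\tilde m)=\pm(1,1)$ terms has a genuine gap at the endpoints. Your decomposition — a fixed-radius neighbourhood of the stationary point $z=y$ on which $w(z)w(y^2/z)\equiv 1$, plus a complement handled by Lemma \ref{bkyibp} — only works for $y$ at distance $\gg\delta_0$ from $\{1,2\}$, where $\delta_0$ is your fixed radius. For $y$ within $\delta_0$ of an endpoint, the stationary point lies within $\delta_0$ of the boundary of the effective $z$-support $[\max(1,y^2/2),\min(2,y^2)]$, so no fixed neighbourhood of it avoids the ramps of $w$; there the amplitude has derivatives of size $\Delta^j=k^{(1-\epsilon)j}$, and a smooth stationary phase expansion carries an error like $(x/c)^{-3/2}\Delta^2$, which after multiplying by $4\pi x^2c^{-1}$, the Bessel bound $k^{-1/3}$, and summing over $c$ is far outside the $O(k^{2/3+4\epsilon})$ budget. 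This exceptional set of $y$ has measure $\asymp\delta_0$, a constant — it is not the $O(\Delta^{-1})$-neighbourhood you account for when unsmoothing $w(y)^2$, and trivial bounds on the inner integral there are also insufficient.

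This endpoint region is in fact where the real work of the proof lies. The paper proceeds differently: it first unsmooths the inner $z$-integral (error $O(k^{-4/3+\epsilon})$, exactly as in your first term), and then applies a stationary phase theorem for sharp cut-offs (\cite[Theorem 2.2]{ivic}), which produces, in addition to the leading term and the $(x/c)^{-3/2}$ remainder, explicit boundary terms of size $(x/c)^{-1/2}\min\{1,(x/c)^{-1/2}|y-1|^{-1}\}$ and the analogue at $y=2$. Integrating these over $y\in[1,2]$ gives $k^{-1/3}(x/c)^{-1}\log(x/c)\ll k^{-4/3+\epsilon}$, and after the outer factors and the $c$-sum this contribution is of size $k^{2/3+O(\epsilon)}$ — right at the edge of the stated error term, not comfortably inside it. To repair your argument you would either need to adopt this sharp-cutoff stationary phase with first-derivative-test endpoint bounds, or carry out an equivalent analysis of the transition region where the stationary point collides with the support boundary; as written, your proposal omits the dominant error contribution.
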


\begin{proof}
Write \(\mathcal A=\{(y,z)\in\mathbb{R}^2: z, y^2/z\in[1,2]\}=\{(z,y)\in[1,2]^2: \max(1,y^2/2)\leq z\leq \min(2, y^2)\}\) and \(\mathcal A_\Delta=\{(y,z)\in\mathcal A:z \text{ or } y^2/z\in[1, 1+\Delta^{-1}]\cup[2-\Delta^{-1}, 2]\}\) (with \(\Delta=k^{1-\epsilon}\)). Considering the first integral in Lemma \ref{initalpoissonsimplified}, one has (using the properties of \(w\) given in (\ref{wdef}))
\begin{multline*}
\int_0^\infty yJ_{k-1}(4\pi xy)\int_0^\infty w\Big(\frac{y^2}{z}\Big)\frac{w(z)}{z}dzdy\\
=\iint_{\mathcal A} \frac yz J_{k-1}(4\pi xy) dzdy+\mathcal O\Big(\iint_{\mathcal A_\Delta}\Big|\frac yz J_{k-1}(4\pi xy)\Big|dzdy\Big).
\end{multline*}
Using the bound (\ref{bessel2}) of Lemma \ref{besselbounds} and the fact \(\mathrm{vol}(\mathcal A_\Delta)\ll \Delta^{-1}\), we bound the above error term by
\[\iint_{\mathcal A_\Delta}\Big|\frac yz J_{k-1}(4\pi xy)\Big|dzdy\ll k^{-1/3}\mathrm{vol}(\mathcal A_\Delta)\ll k^{-1/3}\Delta^{-1}=k^{-4/3+\epsilon}.\]
Therefore 
\begin{multline}\label{mainterm}
\int_0^\infty yJ_{k-1}(4\pi xy)\int_0^\infty w\Big(\frac{y^2}{z}\Big)\frac{w(z)}{z}dzdy=\int_1^2y J_{k-1}(4\pi xy) \int_{\max(1,y^2/2)}^{\min(2,y^2)}\frac{1}{z}dzdy +\mathcal O (k^{-4/3+\epsilon})\\
=2\int_1^{\sqrt{2}} y\log y J_{k-1}(4\pi xy)dy +2\int_{\sqrt2}^2 y\log(2/y) J_{k-1}(4\pi xy)dy+\mathcal O (k^{-4/3+\epsilon})\\
=2\int_0^\infty yL(y)J_{k-1}(4\pi xy) dy +\mathcal O (k^{-4/3+\epsilon}).
\end{multline}

Similarly, the second integral in Lemma \ref{initalpoissonsimplified} is
\begin{multline}\label{errortermscgeq1}
\int_0^\infty yJ_{k-1}\Big(\frac{4\pi xy}{c}\Big) \int_0^\infty w\Big(\frac{y^2}{z}\Big)\frac{w(z)}{z} e\Big(\frac{\varepsilon x}{c}\Big(z+\frac{y^2}{z}\Big)\Big)dzdy\\
=\int_1^2y J_{k-1}\Big(\frac{4\pi xy}{c}\Big)\int_{\max(1,y^2/2)}^{\min(2,y^2)} \frac{1}{z}e\Big(\frac{\varepsilon x}{c}\Big(z+\frac{y^2}{z}\Big)\Big)dzdy +\mathcal O (k^{-4/3+\epsilon}).
\end{multline}
Let \(F(z)=z+y^2/z\). Observe \(z=y\) is the only zero of \(F'(z)=1-y^2/z^2\). Thus the standard stationary phase expansion given in e.g. \cite[Theorem 2.2]{ivic} shows
\begin{multline*}
\int_{\max(1,y^2/2)}^{\min(2,y^2)} \frac{1}{z}e\Big(\frac{\varepsilon x}{c}F(z)\Big)dz
=\frac{e(\varepsilon/8)}{\sqrt 2} \Big(\frac{x}{c}\Big)^{-1/2} y^{-1/2}e\Big(\frac{2\varepsilon xy}{c}\Big)+\mathcal O\Big(\Big(\frac{x}{c}\Big)^{-3/2}\Big)\\
+\mathcal O\Big(\Big(\frac{x}{c}\Big)^{-1/2}\min\Big\{1, \Big(\frac{x}{c}\Big)^{-1/2}\frac{1}{|\max(1,y^2/2)-y|}\Big\}\Big)\\
+\mathcal O\Big(\Big(\frac{x}{c}\Big)^{-1/2} \min\Big\{1, \Big(\frac{x}{c}\Big)^{-1/2}\frac{1}{|\min(2,y^2)-y|}\Big\}\Big).
\end{multline*}
Applying this to the expression in (\ref{errortermscgeq1}) and using the bound \(J_{k-1}(4\pi xy/c)\ll k^{-1/3}\) (which is (\ref{bessel2}) of Lemma \ref{besselbounds}), we have
\begin{multline}\label{errortermscgeq1v2}
\int_1^2y J_{k-1}\Big(\frac{4\pi xy}{c}\Big)\int_{\max(1,y^2/2)}^{\min(2,y^2)} \frac{1}{z}e\Big(\frac{\varepsilon x}{c}\Big(z+\frac{y^2}{z}\Big)\Big)dzdy\\
=\frac{e(\varepsilon/8)}{\sqrt2} \Big(\frac{x}{c}\Big)^{-1/2}\int_1^{2} y^{1/2}J_{k-1}\Big(\frac{4\pi xy}{c}\Big)e\Big(\frac{2\varepsilon xy}{c}\Big)dy+\mathcal O\Big(k^{-1/3}\Big(\frac xc\Big)^{-3/2}\Big)\\
+\mathcal O\Big(k^{-1/3}\Big(\frac{x}{c}\Big)^{-1/2}\int_1^{2}\Big(\min\Big\{1, \Big(\frac{x}{c}\Big)^{-1/2}\frac{1}{|\max(1,y^2/2)-y|}\Big\}\\
+\min\Big\{1, \Big(\frac{x}{c}\Big)^{-1/2}\frac{1}{|\min(2,y^2)-y|}\Big\}\Big)dy\Big).
\end{multline}
The final error term above is
\begin{align*}
&\ll k^{-1/3}\Big(\frac xc\Big)^{-1/2}\Big(\int_1^{\sqrt2} \min\Big\{1, \Big(\frac xc\Big)^{-1/2} \frac{1}{ y-1}\Big\}dy+\int_{\sqrt2}^2 \min\Big\{1, \Big(\frac xc\Big)^{-1/2} \frac{1}{2-y}\Big\}dy\Big)\\
&\ll  k^{-1/3}\Big(\frac xc\Big)^{-1/2}\Big(\int_1^{1+(x/c)^{-1/2}}dy+\Big(\frac xc\Big)^{-1/2}\int_{1+(x/c)^{-1/2}}^{\sqrt2} \frac {dy}{y-1}\\
&\hspace{4em}+\Big(\frac xc\Big)^{-1/2}\int_{\sqrt2}^{2-(x/c)^{-1/2}}\frac{dy}{2-y}+\int_{2-(x/c)^{-1/2}}^2dy\Big) \ll k^{-1/3}\Big(\frac xc\Big)^{-1}\log \Big(\frac xc\Big)\ll k^{-4/3+\epsilon}.
\end{align*}
(In the last step, we used that \(c\leq 32\pi x/k\).) We conclude from (\ref{errortermscgeq1}) and (\ref{errortermscgeq1v2}) that 
\begin{multline}\label{secondintexpression}
\int_0^\infty yJ_{k-1}\Big(\frac{4\pi xy}{c}\Big) \int_0^\infty w\Big(\frac{y^2}{z}\Big)\frac{w(z)}{z} e\Big(\frac{\varepsilon x}{c}\Big(z+\frac{y^2}{z}\Big)\Big)dzdy\\
=\frac{e(\varepsilon/8)}{\sqrt 2} \Big(\frac{x}{c}\Big)^{-1/2}\int_1^2 y^{1/2}J_{k-1}\Big(\frac{4\pi xy}{c}\Big)e\Big(\frac{2\varepsilon xy}{c}\Big)dy+\mathcal O(k^{-4/3+\epsilon}).
\end{multline}

Replacing the estimates (\ref{mainterm}) and (\ref{secondintexpression}) in Lemma \ref{initalpoissonsimplified}, we obtain
\begin{multline*}\label{offdiag106}
(\mathrm{OD})=8\pi (-1)^{k/2} x^2\int_0^\infty yL(y)J_{k-1}(4\pi xy)dy+\mathcal O(x^2k^{-4/3+\epsilon})\\
+2\sqrt{2}\pi (-1)^{k/2} x^{3/2}\sum_{c\leq 32\pi x/k}c^{-1/2}\sum_{\varepsilon=\pm1}e(\varepsilon/8)\int_1^2 y^{1/2}J_{k-1}\Big(\frac{4\pi xy}{c}\Big)e\Big(\frac{2\varepsilon xy}{c}\Big)dy\\
+\mathcal O\Big(x^2k^{-4/3+\epsilon}\log\Big(\frac {32\pi x}{k}+1\Big)\Big).
\end{multline*}
Using the standing assumption \(x\leq k^{1+\epsilon}\), it follows that both error terms above are \(\mathcal O(k^{2/3+4\epsilon})\). The lemma now follows upon making the change of variables \(y\mapsto y/(4\pi x)\) in the first integral above, and \(y\mapsto cy/(4\pi x)\) in the remaining integrals.
\end{proof}

The remainder of the proof of part \ref{varii} of Theorem \ref{thmvar} is split into two lemmas. The first of these gives an asymptotic for the main term (the first integral in Lemma \ref{lemunsmooth}), and the second gives a bound for the error terms. For the rest of this section, we will denote \(\kappa=k-1\) where it is convenient. 

\begin{lemma}[Main Term]\label{maintermlem}
We have
\begin{equation}\label{mt1}
\int_0^\infty yL\Big(\frac{y}{4\pi x}\Big)J_{k-1}(y)dy=L\Big(\frac{k}{4\pi x}\Big)k(1+\mathcal O(k^{-1/8}))+\mathcal O(k^{2/3}).
\end{equation}
\end{lemma}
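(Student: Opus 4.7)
The plan is to exploit the localization of $J_{k-1}(y)$ around $y=k$. Since $L$ is supported on $[1,2]$, the integrand is supported on $y\in [4\pi x, 8\pi x]$. I would split this range at $y=k\pm k^{1/2}$ (intersected with the support) to separate the three regimes of the Bessel function: the small-argument range $y\leq k-k^{1/2}$, the transition range $[k-k^{1/2}, k+k^{1/2}]$, and the oscillatory range $y\geq k+k^{1/2}$. The small-argument part contributes $\mathcal O(e^{-k^{1/6}})$ by the bound (\ref{bessel1}) of Lemma \ref{besselbounds}, which is absorbed into the error.

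For the transition range, I would use the Lipschitz property $|L'|\leq 1$ (valid wherever $L$ is differentiable, as can be read off from (\ref{ldef})) to write $L(y/(4\pi x))=L(k/(4\pi x))+\mathcal O(|y-k|/x)$. Provided $[k-k^{1/2}, k+k^{1/2}]\subseteq [4\pi x, 8\pi x]$, the constant term together with Lemma \ref{transrangeint} (applied with $\nu=k-1$) produces the main contribution $L(k/(4\pi x))(k+\mathcal O(k^{7/8}))=L(k/(4\pi x))k(1+\mathcal O(k^{-1/8}))$. The Lipschitz correction contributes $\ll k^{5/3}/x\ll k^{2/3}$, using $|J_{k-1}(y)|\ll k^{-1/3}$ from (\ref{bessel2}) in the transition regime together with the assumption $x\geq k/(32\pi)$.

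For the oscillatory tail $y\in [\max(k+k^{1/2}, 4\pi x), 8\pi x]$, I would substitute the asymptotic (\ref{bessel3trunc}) and integrate by parts as in the proof of Lemma \ref{oscillrangeint}, reducing matters to oscillatory integrals of the form $\int y L(y/(4\pi x))(y^2-\nu^2)^{-1/4} e^{\pm i\omega(y)}\,dy$ with $\omega'(y)=(y^2-\nu^2)^{1/2}/y$. The key simplification is that $L(y/(4\pi x))$ vanishes at $y=8\pi x$ (since $L(2)=0$), so the right boundary term disappears. After splitting at $y=4\pi x\sqrt{2}$ into subintervals on which $L(y/(4\pi x))$ is smooth and applying \cite[Lemma 4.3]{titchmarsh}, the resulting bound reduces to $\max_y y^2 L(y/(4\pi x))/(y^2-k^2)^{3/4}$, which is $\ll L(k/(4\pi x))k^{7/8}+k^{2/3}$ (using $(y^2-k^2)\geq 2k^{3/2}$ on the oscillatory range and the vanishing of $L$ at the endpoints of $[1,2]$).

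The main technical obstacle is controlling this maximum uniformly when $L(k/(4\pi x))$ vanishes or is small, which happens precisely when $x$ is close to the boundaries $k/(8\pi)$ or $k/(4\pi)$ and only part of the transition regime lies in the support. In these edge regimes one must verify directly that the integral is $\mathcal O(k^{2/3})$: a short computation using the Lipschitz property shows $L(y/(4\pi x))\ll k^{-1/2}$ on the partial transition range and remains controlled throughout the oscillatory range, yielding the bound $\mathcal O(k^{2/3})$ and matching the vanishing of the main term. Assembling the three regime estimates then gives the claimed asymptotic.
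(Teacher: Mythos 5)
Your proposal follows essentially the same route as the paper: split at $y=k\pm k^{1/2}$, discard the small-argument range via (\ref{bessel1}), use the Lipschitz property of $L$ together with Lemma \ref{transrangeint} to extract $L(k/(4\pi x))k(1+\mathcal O(k^{-1/8}))$ from the transition range, and bound the oscillatory tail by substituting (\ref{bessel3trunc}) and applying a first-derivative-test argument while tracking the $L$-weight (which is exactly how the paper absorbs the edge cases where $L(k/(4\pi x))$ is small, via the estimate $L(y/(4\pi x))=L(k/(4\pi x))+\mathcal O(|y-k|/k)$). The argument is correct and matches the paper's proof in all essentials.
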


\begin{proof}
Using (\ref{bessel1}) to bound the contribution of \(y<k-k^{1/2}\) to the integral (\ref{mt1}), we write
\begin{equation}\label{maintermsplit}
\int_0^\infty yL\Big(\frac{y}{4\pi x}\Big)J_{k-1}(y)dy=\Big\{\int_{k-k^{1/2}}^{k+k^{1/2}}+\int_{k+k^{1/2}}^\infty \Big\}yL\Big(\frac{y}{4\pi x}\Big)J_{k-1}(y)dy+\mathcal O(k^2e^{-k^{1/6}}).
\end{equation}
Since \(|L'(\xi)|\leq 1/\xi\) for all \(\xi\) and \(\supp L=[1,2]\), the mean value theorem shows
\begin{equation}\label{Lapprox} 
L\Big(\frac{y}{4\pi x}\Big)=L\Big(\frac{k}{4\pi x}\Big)+\mathcal O\Big(\frac{|y-k|}{k}\Big).
\end{equation}
Consequently,
\begin{equation*} 
\int_{k-k^{1/2}}^{k+k^{1/2}}yL\Big(\frac{y}{4\pi x}\Big)J_{k-1}(y)dy
=L\Big(\frac{k}{4\pi x}\Big)\int_{k-k^{1/2}}^{k+k^{1/2}}yJ_{k-1}(y)dy+\mathcal O\Big(k^{-1/2}\int_{k-k^{1/2}}^{k+k^{1/2}}|yJ_{k-1}(y)|dy\Big).
\end{equation*}
Using the bound (\ref{bessel2}) of Lemma \ref{besselbounds}, the error term above is seen to be \(\mathcal O(k^{2/3})\). We now apply Lemma \ref{transrangeint} to the main term above, which yields
\begin{equation}\label{transitionrangey}
\int_{k-k^{1/2}}^{k+k^{1/2}}yL\Big(\frac{y}{4\pi x}\Big)J_{k-1}(y)dy=L\Big(\frac{k}{4\pi x}\Big)k(1+\mathcal O(k^{-1/8}))+\mathcal O(k^{2/3}).
\end{equation}
Replacing (\ref{transitionrangey}) in (\ref{maintermsplit}), we have shown
\begin{multline}\label{maintermsplitv2}
\int_0^\infty yL\Big(\frac{y}{4\pi x}\Big)J_{k-1}(y)dy\\
=L\Big(\frac{k}{4\pi x}\Big)k(1+\mathcal O(k^{-1/8}))+\int_{k+k^{1/2}}^\infty yL\Big(\frac{y}{4\pi x}\Big) J_{k-1}(y)dy+\mathcal O(k^{2/3}).
\end{multline}

We now bound the remaining integral in (\ref{maintermsplitv2}), which is taken over the oscillatory regime of the Bessel function. Note this integral appears only if \(8\pi x\geq k+k^{1/2}\). We first replace the Bessel function by the asymptotic (\ref{bessel3trunc}) of Lemma \ref{besselasymptotics}, which shows (recall \(\kappa=k-1\))
\begin{multline}\label{asympreplacedagain}
\int_{k+k^{1/2}}^\infty yL\Big(\frac{y}{4\pi x}\Big)J_{k-1}(y)dy=\sqrt{\frac 2\pi} \int_{k+k^{1/2}}^\infty y(y^2-\kappa^2)^{-1/4}L\Big(\frac{y}{4\pi x}\Big)\cos\omega(y)dy\\
+\mathcal O\Big(\int_{k+k^{1/2}}^\infty y^3 (y^2-\kappa^2)^{-7/4}L\Big(\frac{y}{4\pi x}\Big)dy\Big).
\end{multline}
Here \(\omega(y)=\omega_\kappa(y)\) is as given in (\ref{omega}). Using (\ref{omega'}), which states \(\omega'(y)=(y^2-\kappa^2)^{1/2}/y\), we integrate by parts to see that the first integral above is
\begin{multline*}
\int_{k+k^{1/2}}^\infty y(y^2-\kappa^2)^{-1/4}L\Big(\frac{y}{4\pi x}\Big)\cos\omega(y)dy=\int_{k+k^{1/2}}^\infty y^2 (y^2-\kappa^2)^{-3/4} L\Big(\frac{y}{4\pi x}\Big)d(\sin\omega(y))\\
=-(k+k^{1/2})^2((k+k^{1/2})^2-\kappa^2)^{-3/4}L\Big(\frac{k+k^{1/2}}{4\pi x}\Big)\sin\omega(k+k^{1/2})\\
-\int_{k+k^{1/2}}^\infty \frac{d}{dy}\Big\{y^2(y^2-\kappa^2)^{-3/4}L\Big(\frac{y}{4\pi x}\Big)\Big\}\sin\omega(y)dy.
\end{multline*}
Using \(|L'(\xi)|\leq 1/\xi\), \(|L(\xi)|\leq \log 2/2\), (\ref{Lapprox}), and the fact that \(\supp L=[1,2]\), the above is seen to be
\begin{multline*}
\ll k^{7/8}\Big(L\Big(\frac{k}{4\pi x}\Big)+\mathcal O(k^{-1/2})\Big)+\int_{\max(k+k^{1/2}, 4\pi x)}^{8\pi x} y (y^2-k^2)^{-3/4}dy\\
+\int_{k+k^{1/2}}^\infty y^3(y^2-k^2)^{-7/4}L\Big(\frac{y}{4\pi x}\Big)dy.
\end{multline*}
Combining this with (\ref{asympreplacedagain}), we have shown
\begin{multline}\label{asympreplacedbutnowibp}
\int_{k+k^{1/2}}^\infty yL\Big(\frac{y}{4\pi x}\Big)J_{k-1}(y)dy\ll L\Big(\frac{k}{4\pi x}\Big) k^{7/8}+k^{3/8}\\
+\int_{\max(k+k^{1/2}, 4\pi x)}^{8\pi x} y (y^2-k^2)^{-3/4}dy
+\int_{k+k^{1/2}}^\infty y^3(y^2-k^2)^{-7/4}L\Big(\frac{y}{4\pi x}\Big)dy.
\end{multline}
The former integral in (\ref{asympreplacedbutnowibp}) is
\begin{equation}\label{eqint1} 
\int_{\max(k+k^{1/2}, 4\pi x)}^{8\pi x} y (y^2-k^2)^{-3/4}dy=2(y^2-k^2)^{1/4}\Big|_{\max(k+k^{1/2}, 4\pi x)}^{8\pi x}\ll x^{1/2}.
\end{equation}
Using (\ref{Lapprox}) we write the latter integral in (\ref{asympreplacedbutnowibp}) as
\begin{equation*}
\int_{k+k^{1/2}}^\infty y^3(y^2-k^2)^{-7/4}L\Big(\frac{y}{4\pi x}\Big)dy=\int_{\max(k+k^{1/2},4\pi x)}^{8\pi x} y^3(y^2-k^2)^{-7/4}\Big(L\Big(\frac{k}{4\pi x}\Big)+\mathcal O\Big(\frac{y-k}{k}\Big)\Big)dy.
\end{equation*}
Now
\begin{multline*}
L\Big(\frac{k}{4\pi x}\Big)\int_{\max(k+k^{1/2},4\pi x)}^{8\pi x} y^3(y^2-k^2)^{-7/4}dy\\
=L\Big(\frac{k}{4\pi x}\Big)\Big[2(y^2-k^2)^{1/4}-\frac23 k^2(y^2-k^2)^{-3/4}\Big]_{\max(k+k^{1/2}, 4\pi x)}^{8\pi x}\ll L\Big(\frac{k}{4\pi x}\Big) k^{7/8}+x^{1/2},
\end{multline*}
and 
\begin{equation*}
k^{-1}\int_{\max(k+k^{1/2},4\pi x)}^{8\pi x} y^3(y-k)(y^2-k^2)^{-7/4}dy\ll k^{-1}\int_{\max(k+k^{1/2},4\pi x)}^{8\pi x}y^{5/4}(y-k)^{-3/4}dy\ll k^{-1}x^{3/2}.
\end{equation*}
So overall, the latter integral in (\ref{asympreplacedbutnowibp}) is 
\begin{equation}\label{eqint2}
\int_{k+k^{1/2}}^\infty y^3(y^2-k^2)^{-7/4}L\Big(\frac{y}{4\pi x}\Big)dy\ll L\Big(\frac{k}{4\pi x}\Big)k^{7/8}+x^{1/2}+k^{-1}x^{3/2}.
\end{equation}
Replacing the two estimates (\ref{eqint1}) and (\ref{eqint2}) in (\ref{asympreplacedbutnowibp}), we obtain (using the standing assumption \(x\leq k^{1+\epsilon}\))
\[\int_{k+k^{1/2}}^\infty yL\Big(\frac{y}{4\pi x}\Big)J_{k-1}(y)dy\ll L\Big(\frac{k}{4\pi x}\Big)k^{7/8}+x^{1/2}+k^{-1}x^{3/2}\ll L\Big(\frac{k}{4\pi x}\Big)k^{7/8}+k^{1/2+3\epsilon/2}. \]
The lemma now follows from (\ref{maintermsplitv2}).
\end{proof}

\begin{lemma}[Error Terms]\label{errortermlem}
For \(c\leq 32\pi x/k
\) and \(\varepsilon=\pm1\), we have
\begin{equation*}
\int_{4\pi x/c}^{8\pi x/c}y^{1/2}J_{k-1}(y)e^{i\varepsilon y}dy\ll k^{2/3}.
\end{equation*}
\end{lemma}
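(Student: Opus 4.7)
The plan is to split the interval $[4\pi x/c, 8\pi x/c]$ according to the three regimes of $J_{k-1}(y)$. Since $c \leq 32\pi x/k$, the lower endpoint satisfies $4\pi x/c \geq k/8$. Wherever present in our range, the portion $y \leq k - k^{1/2}$ is negligible by the bound (\ref{bessel1}) of Lemma \ref{besselbounds}, which gives $J_{k-1}(y) \ll e^{-k^{1/6}}$. For $y \in [k - k^{1/2}, k + k^{1/2}]$ (the transition regime, of length $\ll k^{1/2}$), the uniform estimate $|J_{k-1}(y)| \ll k^{-1/3}$ from part \ref{besiii} of Lemma \ref{besselbounds}, combined with $y^{1/2} \ll k^{1/2}$ and $|e^{i\varepsilon y}| = 1$, gives a contribution of size $\ll k^{2/3}$, already matching the target bound.

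The main work is in the oscillatory regime $y \geq k + k^{1/2}$, where we apply the asymptotic (\ref{bessel3trunc}) of Lemma \ref{besselasymptotics}: $J_{k-1}(y) = \sqrt{2/\pi}(y^2 - \kappa^2)^{-1/4}\cos\omega(y) + O(y^2(y^2 - \kappa^2)^{-7/4})$, where $\kappa = k - 1$. The error term integrates to $\ll \int y^{5/2}(y^2 - \kappa^2)^{-7/4} dy \ll k^{3/8}$, the integral being dominated by the region near $y = k + k^{1/2}$. The main oscillatory term then reduces to bounding the four integrals
\[\int y^{1/2}(y^2 - \kappa^2)^{-1/4} e^{i(\varepsilon y \pm \omega(y))} dy,\]
whose phase derivative is $\varepsilon \pm \omega'(y) = \varepsilon \pm (y^2 - \kappa^2)^{1/2}/y$.

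The hard part is the two ``slow-phase'' cases where the signs of $\varepsilon$ and $\pm\omega'(y)$ oppose. Since $\omega'(y) \in (0, 1)$, the two ``fast-phase'' cases have $|\varepsilon \pm \omega'(y)| \geq 1$, and a single integration by parts yields an $O(1)$ bound. For each slow-phase case, the elementary inequality $1 - \sqrt{1 - u} \geq u/2$ (applied with $u = \kappa^2/y^2$) provides the lower bound $|\varepsilon \pm \omega'(y)| \geq \kappa^2/(2y^2)$, and integration by parts then produces boundary terms of the form $y^{1/2}(y^2 - \kappa^2)^{-1/4}/(\varepsilon \pm \omega'(y))$. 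At $y = k + k^{1/2}$ this is $\ll k^{1/8}$, and at the upper endpoint it is $\ll (x/c)^2/k^2 \leq k^{2\epsilon}$, using the standing assumptions $x \leq k^{1+\epsilon}$ and $c \geq 1$. The corresponding integrated derivative term is controlled by the same boundary values, since the ratio $y^{1/2}(y^2 - \kappa^2)^{-1/4}/(1 - (y^2 - \kappa^2)^{1/2}/y)$ has only a single interior minimum and thus well-behaved total variation on the interval. Combining all pieces yields the required $O(k^{2/3})$ bound.
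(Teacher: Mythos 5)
Your proof is correct and follows essentially the same route as the paper: the same three-regime split, the same lower bound $|\varepsilon\pm\omega'(y)|\geq 1-(1-\kappa^2/y^2)^{1/2}\geq \kappa^2/(2y^2)$, and the same first-derivative-test/integration-by-parts bound with boundary terms of size $k^{1/8}$ (the paper simply cites \cite[Lemma 4.4]{titchmarsh} for all four sign choices rather than separating fast and slow phases). The only nitpick is that in the ``fast-phase'' cases the amplitude $y^{1/2}(y^2-\kappa^2)^{-1/4}$ is of size $k^{1/8}$ at the lower endpoint, so a single integration by parts gives $O(k^{1/8})$ rather than $O(1)$ --- still far within the target $O(k^{2/3})$.
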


\begin{proof}
Using (\ref{bessel1}) of Lemma \ref{besselbounds}, we see that the contribution of \(y\in[4\pi x/c,8\pi x/c]\cap[0,k-k^{1/2}]\) is negligible, since \(J_{k-1}(y)\ll e^{-k^{1/6}}\) in this range. Moreover, using (\ref{bessel2}) we can bound the contribution of \(y\in[4\pi x/c,8\pi x/c]\cap [k-k^{1/2},k+k^{1/2}]\) by
\[\int_{k-k^{1/2}}^{k+k^{1/2}}|y^{1/2}J_{k-1}(y)|dy\ll \int_{k-k^{1/2}}^{k+k^{1/2}}k^{1/2}\cdot k^{-1/3}dy\ll k^{2/3},\]
which is acceptable.

To prove the lemma, it therefore suffices to show that the contribution from  \(y\in[k+k^{1/2},\infty)\cap [4\pi x/c,8\pi x/c]\) (which appears only if \(8\pi x/c\geq k+k^{1/2}\)) is \(\mathcal O(k^{2/3})\). Applying the asymptotic (\ref{bessel3trunc}) for the Bessel functions (valid in this range of of \(y\)), we find that this contribution is 
\begin{align}\label{oscillcontrib}
&\int_{\max(k+k^{1/2}, 4\pi x/c)}^{8\pi x/c} y^{1/2}J_{k-1}(y)e^{i\varepsilon y}dy\\
=&\sqrt{\frac2\pi}\int_{\max(k+k^{1/2}, 4\pi x/c)}^{8\pi x/c} y^{1/2}(y^2-\kappa^2)^{-1/4}\cos\omega(y)e^{i\varepsilon y}dy\nonumber\\
&\hspace{10em}+\mathcal O\Big(\int_{\max(k+k^{1/2}, 4\pi x/c)}^{8\pi x/c}y^{5/2}(y^2-k^2)^{-7/4}dy\Big)\nonumber\\
\ll& \Big|\int_{\max(k+k^{1/2}, 4\pi x/c)}^{8\pi x/c} y^{1/2}(y^2-\kappa^2)^{-1/4}(e^{i(\varepsilon y-\omega(y))}+e^{i(\varepsilon y+\omega(y))})dy\Big|+\Big(\frac xc\Big)^{3/2}k^{-9/8}.\nonumber
\end{align}
It is a standard matter to bound the oscillatory integrals above. First (recalling (\ref{omega'})) note
\begin{equation*}
\frac{d}{dy}\{\varepsilon y\pm \omega(y)\}=\varepsilon \pm\Big(1-\frac{\kappa^2}{y^2}\Big)^{1/2}.
\end{equation*}
So we obtain 
\begin{equation*}
\Big|\frac{d}{dy}\{\varepsilon y\pm\omega(y)\}\Big|\geq 1-\Big(1-\frac{\kappa^2}{y^2}\Big)^{1/2}\geq \frac{\kappa^2}{2y^2}\gg\frac{k^2}{y^2}.
\end{equation*}
We now apply \cite[Lemma 4.4]{titchmarsh} (possibly after partitioning the interval of integration into two subintervals on which \((\varepsilon \pm\omega'(y))(y^2-\kappa^2)^{1/4}y^{-1/2}\) is monotonic), which gives the bound
\begin{multline*}
\int_{\max(k+k^{1/2}, 4\pi x/c)}^{8\pi x/c} y^{1/2}(y^2-\kappa^2)^{-1/4}e^{i(\varepsilon y\pm \omega(y))}dy\\
\ll y^{5/2}k^{-2}(y^2-\kappa^2)^{-1/4}\Big|_{y=\max(k+k^{1/2}, 4\pi x/c)}\ll k^{1/8}.
\end{multline*}
We now conclude from (\ref{oscillcontrib}) that the contribution of \(y\in[k+k^{1/2},\infty)\cap [4\pi x/c,8\pi x/c]\) is \(\ll k^{1/8}+x^{3/2}k^{-9/8}\ll k^{3/8+3\epsilon/2}\) (by the standing assumption \(x\leq k^{1+\epsilon}\)), which is acceptable. This proves the lemma.
\end{proof}

\begin{proof}[Proof of part \ref{varii} of Theorem \ref{thmvar}]
Applying Lemmas \ref{maintermlem} \& \ref{errortermlem} to Lemma \ref{lemunsmooth}, we obtain
\begin{multline*}
(\mathrm{OD})=\frac{(-1)^{k/2}}{2\pi} L\Big(\frac{k}{4\pi x}\Big)k(1+\mathcal O(k^{-1/8})) +\mathcal O(k^{2/3}) + \mathcal O\Big(k^{2/3}\sum_{c\leq 32\pi x/k}c\Big) +\mathcal O(k^{2/3+4\epsilon})\\
=\frac{(-1)^{k/2}}{2\pi} L\Big(\frac{k}{4\pi x}\Big)k(1+\mathcal O(k^{-1/8})) +\mathcal O(k^{2/3+4\epsilon}).
\end{multline*}
In the last line, we used the standing assumption \(x\leq k^{1+\epsilon}\) (to bound \(\sum_c c\ll k^{2\epsilon}\)). Equipped with this estimate for the off-diagonal terms (which also implies \((\mathrm{OD})\ll x\)), Lemma \ref{insplit4lem} shows
\begin{equation*} 
\langle \mathcal S(x,f)^2\rangle =x+\frac{(-1)^{k/2}}{2\pi} L\Big(\frac{k}{4\pi x}\Big)k(1+\mathcal O(k^{-1/8})) +\mathcal O(k^{2/3+4\epsilon})+\mathcal O(x^{1/2}k^{3\epsilon}),
\end{equation*}
completing the proof.
\end{proof}

\subsection{Proof of Theorem \ref{thmvar}, part \ref{variii}} \label{subsecpart3}

Throughout this section (§\ref{subsecpart3}), we fix \(\epsilon\) and assume \( k^{1+\epsilon}\leq x\leq k^2\). The key idea to handle \(\langle \mathcal S(x,f)^2\rangle\) in this range is to transform \(\mathcal S(x,f)\) using Lemma \ref{vorprop}. This shortens the effective length of the sums, after which the off-diagonal terms (arising from the trace formula) will be seen to contribute only a negligible quantity (as in the proof of part \ref{vari} of Theorem \ref{thmvar}). 

Lemma \ref{vorprop} introduces a smoothing function \(w=w_\Delta\) (given in (\ref{wdef})), and the transformed sums contain a Hankel-type transform of \(w\) given by 
\begin{equation}\label{tildewdefrepeated}
\tilde w(\xi)=\tilde w_\Delta(\xi)=\int_0^\infty w_\Delta(t)J_{k-1}(4\pi \sqrt{(k^2+\Delta^2)\xi t})dt.
\end{equation}
Evaluating the diagonal terms arising from the Petersson formula now requires some analysis of these transforms -- the key input will be the following lemma. 

\begin{lemma}\label{diagterms}
Let \(\Delta\) be a sufficiently large parameter satisfying \(\Delta\leq k\), and let \(\tilde w=\tilde w_\Delta\) as given in (\ref{tildewdefrepeated}). Then
\begin{equation*}
4\pi^2 x^2 \sum_{n\geq 1} \tilde w\Big(\frac{nx}{k^2+\Delta^2}\Big)^2=x+\mathcal O\Big(\frac x\Delta\Big)+\mathcal O\Big(\frac{x^{3/2}\log^3 k}{k}\Big).
\end{equation*}
\end{lemma}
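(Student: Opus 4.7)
The plan is to combine Plancherel for the Hankel transform of order $k-1$ (providing the main term) with Poisson summation (handling the passage from sum to integral and supplying the error).

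\emph{Main term via Plancherel.} The substitution $r=\sqrt t$ in (\ref{tildewdefrepeated}) identifies $\tilde w(\xi)$ with the Hankel transform $(\mathcal H_{k-1}g)(a)$ of $g(r)=2w(r^2)$, evaluated at $a=4\pi\sqrt{(k^2+\Delta^2)\xi}$. Unitarity of $\mathcal H_{k-1}$ on $L^2((0,\infty),r\,dr)$, followed by the change of variable $\xi\leftrightarrow a$ (so $a\,da = 8\pi^2(k^2+\Delta^2)\,d\xi$), gives
\[\int_0^\infty \tilde w(\xi)^2\,d\xi = \frac{1}{4\pi^2(k^2+\Delta^2)}\int_0^\infty w(t)^2\,dt = \frac{1 + \mathcal O(\Delta^{-1})}{4\pi^2(k^2+\Delta^2)}.\]
Setting $M=(k^2+\Delta^2)/x$ and extending $\tilde w^2$ to $\mathbb R$ by evenness (with $\tilde w(0) = 0$ since $J_{k-1}(0) = 0$), Poisson summation yields
\[\sum_{n\geq 1}\tilde w(n/M)^2 = M\int_0^\infty\tilde w(\xi)^2\,d\xi + \sum_{m\geq 1}\hat F(m), \qquad \hat F(m) = 2M\int_0^\infty \tilde w(\xi)^2 \cos(2\pi m M\xi)\,d\xi.\]
Multiplying by $4\pi^2 x^2$, the zero-frequency contribution is exactly $x\int w^2 = x + \mathcal O(x/\Delta)$, which is the desired main term.

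\emph{Error from non-zero frequencies.} Expanding $\tilde w(\xi)^2$ as a double integral, interchanging orders, and using Weber's second exponential integral $\int_0^\infty J_\nu(\alpha u)J_\nu(\beta u)e^{-pu^2}u\,du = (2p)^{-1}e^{-(\alpha^2+\beta^2)/(4p)}I_\nu(\alpha\beta/(2p))$ continued analytically to purely imaginary $p=-2\pi i mM$ (with $I_{k-1}(-iz)=e^{-i(k-1)\pi/2}J_{k-1}(z)$), then isolating the real part and exploiting that $k$ is even, one obtains
\[\hat F(m) = \frac{c(-1)^{k/2}}{m}\iint w(s)w(t) J_{k-1}\!\left(\frac{4\pi x\sqrt{st}}{m}\right)\cos\!\left(\frac{2\pi x(s+t)}{m}\right) ds\,dt\]
for some absolute constant $c$. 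For $m\gg x/k$ the argument of the Bessel function stays well below $k$ and Lemma \ref{besselbounds}(i) gives exponential decay. For $m\lesssim x/k$ one inserts the asymptotic expansion (\ref{bessel3trunc}); the Bessel phase $\omega(y)\approx y - k\pi/2 -\pi/4$ combines with the cosine factor into phases $(2\pi x/m)(\sqrt s\pm\sqrt t)^2$. The $(+)$ combination has no stationary point in $[1,2]^2$, yielding rapid decay by repeated integration by parts; the $(-)$ combination is stationary along the ridge $s=t$, and two-dimensional stationary phase gives a contribution of size $\ll m/x^{3/2}$ per such $m$.

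The main obstacle is executing this oscillatory integral analysis carefully enough to prove $4\pi^2 x^2 \sum_{m\geq 1}|\hat F(m)|\ll x^{3/2}\log^3 k/k$. The harmonic sum $\sum_{m\leq x/k}1/m$ supplies one factor of $\log k$; the remaining logarithmic losses arise from the transition region of $J_{k-1}$ (where its asymptotic expansion degenerates and cruder bounds from Lemma \ref{besselbounds} must be invoked) and from boundary contributions where the stationary ridge $s=t$ meets $\partial(\supp w\times\supp w)$.
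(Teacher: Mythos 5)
Your main-term computation is sound and coincides with the paper's: both evaluate \(\int_0^\infty\tilde w(\xi)^2\,d\xi\) via Plancherel/inversion for the Hankel transform of order \(k-1\), obtaining \((1+\mathcal O(\Delta^{-1}))/(4\pi^2(k^2+\Delta^2))\). Where you genuinely diverge is the passage from sum to integral: the paper uses Mellin inversion, which produces \(\zeta(z+w)\), shifts contours past the pole at \(z+w=1\), and controls the remaining double integral with the second-moment bound for \(\zeta\) on the critical line (Lemma \ref{zetasecmom}) --- that is where the \(\log^3k\) comes from. You instead use Poisson summation and must bound the nonzero frequencies \(\hat F(m)\) as oscillatory integrals. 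Observe that, after rescaling \(u=xs\), \(v=xt\), your \(\hat F(m)\) is precisely the \((\tilde n,\tilde m)=\pm(1,1)\), \(c=m\) term of the off-diagonal treated in Lemmas \ref{initalpoissonsimplified}--\ref{errortermlem}; your route effectively undoes the Vorono\"i transformation and returns to the direct analysis that the paper abandons for \(x\geq k^{1+\epsilon}\).

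The gap is the claimed per-frequency bound \(\ll m/x^{3/2}\). With your approximation \(\omega(y)\approx y-k\pi/2-\pi/4\), the phase of the \((-)\) combination is \((2\pi x/m)(\sqrt s-\sqrt t)^2\), which is \emph{constant} along the ridge \(s=t\); the critical set is one-dimensional and degenerate, so stationary phase yields only one factor \((x/m)^{-1/2}\) (transverse direction) against the amplitude \((x/m)^{-1/2}\) from \((y^2-\kappa^2)^{-1/4}\), i.e. a bound \(\ll m/x\) per double integral, not \(m/x^{3/2}\). Feeding \(m/x\) into your sum gives \(4\pi^2x^2\sum_{m\leq x/k}m^{-1}\cdot(m/x)\asymp x^2/k\), which exceeds the main term \(x\) throughout the range \(x>k\), so the argument as stated fails. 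The missing saving is the next term in the Bessel phase, \(\omega(y)=y+\kappa^2/(2y)-\kappa\pi/2-\pi/4+\cdots\): along the ridge the residual phase \(\asymp\kappa^2 m/(8\pi xs)\) still oscillates, with derivative \(\asymp k^2m/x\), and the first-derivative test along the ridge supplies the extra factor \(\min(1,x/(k^2m))\) that brings the total down to \(\ll x^2\log k/k^2\leq x^{3/2}\log k/k\) (using \(x\leq k^2\)). This is exactly the mechanism of Lemma \ref{errortermlem}, where \(|\tfrac{d}{dy}(\varepsilon y\pm\omega(y))|\geq\kappa^2/(2y^2)\) is the key inequality. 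You would also need to treat separately those \(m\) near \(x/k\) for which the Bessel argument lies in the transition regime where (\ref{bessel3trunc}) is unavailable, and to justify the analytic continuation of Weber's integral to purely imaginary parameter. With these repairs your approach can be made to work, but as written the central estimate does not hold.
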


\begin{proof}[Proof of part \ref{variii} of Theorem \ref{thmvar}, assuming Lemma \ref{diagterms}]
For any sufficiently large parameter \(\Delta\leq k\leq x^{1-\epsilon/2}\), Lemma \ref{vorprop} gives that for \(f\in\mathcal B_k\), 
\begin{equation}\label{vorpropexpl}
\mathcal S(x,f)=2\pi (-1)^{k/2}x\sum_{n\geq1} \lambda_f(n)\tilde w\Big(\frac{nx}{k^2+\Delta^2}\Big)+\mathcal O \Big(\frac{x\log x}{\Delta}\Big),
\end{equation}
with \(\tilde w\) as given in (\ref{tildewdefrepeated}). Lemma \ref{ibp} also gives the bound \(\tilde w(nx/(k^2+\Delta^2))\ll n^{-2}k^{-1100}\), valid for \(n\geq k^{2+\epsilon/2}/x\geq (k^2+\Delta^2)k^{\epsilon/2}/(2x)\). Consequently, we deduce from (\ref{vorpropexpl}) that
\begin{equation}\label{readytosquare}
\mathcal S(x,f)=2\pi (-1)^{k/2}x\sum_{n\leq k^{2+\epsilon/2}/x} \lambda_f(n)\tilde w\Big(\frac{nx}{k^2+\Delta^2}\Big)+\mathcal O \Big(\frac{x\log x}{\Delta}\Big).
\end{equation}
Set
\begin{equation}\label{vardef}
\sigma^2=4\pi^2 x^2 \sum_{m,n\leq k^{2+\epsilon/2}/x} \langle \lambda_f(n)\lambda_f(m)\rangle \tilde w\Big(\frac{nx}{k^2+\Delta^2}\Big)\tilde w\Big(\frac{mx}{k^2+\Delta^2}\Big).
\end{equation}
Squaring out (\ref{readytosquare}) and applying Cauchy-Schwarz to the cross-term, we now obtain
\begin{equation}\label{var2}
\langle \mathcal S(x,f)^2\rangle = \sigma^2+\mathcal O \Big(\frac{\sigma x\log x}{\Delta}\Big)+\mathcal O \Big(\frac{x^2\log^2 x}{\Delta^2}\Big).
\end{equation}

Since \(x\geq k^{1+\epsilon}\implies k^{2+\epsilon/2}/x\leq k^{1-\epsilon/2}\), we have \(4\pi\sqrt{mn}\leq 999k/1000\) for all integers \(m,n\leq k^{2+\epsilon/2}/x\). 
This means we can apply the Petersson trace formula in the form of Corollary \ref{averages}.
This shows
\[\sigma^2=4\pi^2 x^2\sum_{n\leq k^{2+\epsilon/2}/x}\tilde w\Big(\frac{nx}{k^2+\Delta^2}\Big)^2+\mathcal O\Big(x^2\sum_{m,n\leq k^{2+\epsilon/2}/x}\tilde w\Big(\frac{nx}{k^2+\Delta^2}\Big)\tilde w\Big(\frac{mx}{k^2+\Delta^2}\Big)\exp(-k^{3/5})\Big).\]
Since \(\tilde w(\xi)\ll 1\) for all \(\xi\) (see Lemma \ref{ibp}), the off-diagonal contribution above is \(\mathcal O(e^{-\sqrt k})\). Moreover, using Lemma \ref{diagterms} and the bound \(\tilde w(nx/(k^2+\Delta^2))\ll n^{-2}k^{-1100}\) for \(n\geq k^{2+\epsilon/2}/x\) (as above), one computes
\begin{multline}\label{var2sigmaexp}
4\pi^2 x^2\sum_{n\leq k^{2+\epsilon/2}/x}\tilde w\Big(\frac{nx}{k^2+\Delta^2}\Big)^2=4\pi^2 x^2\sum_{n\geq1}\tilde w\Big(\frac{nx}{k^2+\Delta^2}\Big)^2+\mathcal O(k^{-1000})\\
=x+\mathcal O\Big(\frac x\Delta\Big)+\mathcal O\Big(\frac{x^{3/2}\log^3k}{k}\Big)
\implies \sigma^2=x+\mathcal O\Big(\frac x\Delta\Big)+\mathcal O\Big(\frac{x^{3/2}\log^3k}{k}\Big).
\end{multline}
Choose \(\Delta=k\). Part \ref{variii} of Theorem \ref{thmvar} now follows from (\ref{var2}) and the estimate (\ref{var2sigmaexp}).
\end{proof}

The remainder of this section is devoted to the proof of Lemma \ref{diagterms}. The main idea is to apply the Mellin inversion theorem. To this end, let \(\phi\) denote the Mellin transform of \(\tilde w\):
\begin{equation}\label{phifirstdef}
\phi(s)=\int_0^\infty \xi^{s-1}\tilde w(\xi)d\xi.
\end{equation}
We require the following lemma to control the behaviour of \(\phi\).

\begin{lemma}\label{phibound}
Let \(s=\sigma+it\), and suppose \(\sigma_0\leq\sigma\leq 1-\sigma_0\) for some \(\sigma_0> 0\). In this region, \(\phi\) is holomorphic and satisfies
\[\phi(s)\ll_{A,\sigma_0} (|t|+1)^{-1} (k^2+\Delta^2)^{-\sigma} (k^2+t^2)^{\sigma-1/2}\Big(\frac{\Delta}{|t|+1}\Big)^A,\]
for any integer \(A\geq0\). In particular, if \(|t|\geq \Delta k^\epsilon\), then \(|\phi(s)|\ll |t|^{-2}k^{-10000}\).
\end{lemma}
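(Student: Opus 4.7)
The plan is to compute $\phi(s)$ in closed form and then bound each factor separately. Interchanging the order of integration in
\[\phi(s) = \int_0^\infty \xi^{s-1} \int_0^\infty w(t) J_{k-1}(4\pi\sqrt{(k^2+\Delta^2)\xi t})\,dt\,d\xi\]
(justified by Fubini in the substrip where the double integral converges absolutely, e.g.\ $0<\sigma<1/4$), substituting $u = 4\pi\sqrt{(k^2+\Delta^2)\xi t}$ in the inner $\xi$-integral, and invoking the standard Mellin transform identity
\[\int_0^\infty u^{2s-1} J_{k-1}(u)\,du = 2^{2s-1}\,\frac{\Gamma((k-1)/2+s)}{\Gamma((k+1)/2-s)}\]
(valid initially for $-(k-1)/2<\sigma<3/4$) produces the explicit formula
\[\phi(s) = \frac{1}{4\,(4\pi^2(k^2+\Delta^2))^{s}}\cdot\frac{\Gamma((k-1)/2+s)}{\Gamma((k+1)/2-s)}\int_0^\infty w(t)\,t^{-s}\,dt.\]
The right-hand side is holomorphic in $\sigma_0\leq\sigma\leq 1-\sigma_0$ (and well beyond): the prefactor is entire, the Gamma ratio has poles only at $s=-(k-1)/2-m$ for integers $m\geq 0$, and the Mellin transform of $w$ is entire since $w$ has compact support in $(0,\infty)$. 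Analytic continuation extends the identity to the entire strip.

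Next I bound each factor. The prefactor contributes $(4\pi^2(k^2+\Delta^2))^{-\sigma}$ in modulus. For the Gamma ratio, using $\Gamma(z+1)=z\Gamma(z)$ I rewrite
\[\frac{\Gamma((k-1)/2+s)}{\Gamma((k+1)/2-s)} = \frac{1}{(k-1)/2-s}\cdot\frac{\Gamma((k-1)/2+s)}{\Gamma((k-1)/2-s)},\]
and a standard Stirling estimate gives $|\Gamma((k-1)/2+\sigma+it)/\Gamma((k-1)/2-\sigma+it)| \asymp ((k-1)^2/4+t^2)^\sigma$ together with $|(k-1)/2-s|\asymp \sqrt{k^2+t^2}$, uniformly for $\sigma\in[\sigma_0,1-\sigma_0]$; these combine to yield $|\Gamma((k-1)/2+s)/\Gamma((k+1)/2-s)| \ll (k^2+t^2)^{\sigma-1/2}$. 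For the Mellin transform of $w$, integration by parts $A\geq 1$ times yields
\[\int_0^\infty w(t)\,t^{-s}\,dt = \frac{(-1)^A}{(1-s)(2-s)\cdots(A-s)}\int_0^\infty w^{(A)}(t)\,t^{A-s}\,dt;\]
since $w^{(A)}$ is supported on a set of total measure $\ll \Delta^{-1}$ on which $|w^{(A)}|\ll \Delta^A$, the remaining integral is $\ll_A \Delta^{A-1}$, so the Mellin transform itself is $\ll_A \Delta^{A-1}/(|t|+1)^A$.

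Multiplying the three estimates and relabelling $A\to A+1$ produces precisely
\[|\phi(s)| \ll_A \frac{(k^2+\Delta^2)^{-\sigma}(k^2+t^2)^{\sigma-1/2}}{|t|+1}\left(\frac{\Delta}{|t|+1}\right)^A,\]
as claimed. For the final assertion, if $|t|\geq \Delta k^\epsilon$ then $\Delta/(|t|+1)\leq k^{-\epsilon}$, so taking $A$ to be a sufficiently large multiple of $\epsilon^{-1}$ makes $k^{-A\epsilon}$ swamp the polynomial factors in $k$ and $|t|$ coming from the prefactor, the Gamma ratio, and the compensating $|t|^2$, yielding $|\phi(s)|\ll |t|^{-2}k^{-10000}$. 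The main obstacle I anticipate is the uniform Stirling bound for the Gamma ratio: although standard, it requires careful treatment of both regimes $|t|\lesssim k$ and $|t|\gg k$ to verify that the error terms in Stirling remain controlled and that the imaginary contributions from $\log z$ cancel when the two Gamma factors are treated together via $\Gamma(z+1)=z\Gamma(z)$.
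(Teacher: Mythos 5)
Your proof is correct and follows essentially the same route as the paper: interchange the order of integration in the substrip $0<\sigma<1/4$, evaluate the inner integral via the Mellin transform of $J_{k-1}$ to get the explicit Gamma-ratio formula, extend by analytic continuation, then bound the Gamma ratio by Stirling and the Mellin transform of $w$ by repeated integration by parts. The only blemish is the spurious constant $\tfrac14$ in your closed form (the substitution actually yields $\phi(s)=(4\pi^2(k^2+\Delta^2))^{-s}\,\Gamma(\tfrac{k-1}{2}+s)\Gamma(\tfrac{k+1}{2}-s)^{-1}\int_0^\infty y^{-s}w(y)\,dy$ with constant $1$), which is immaterial for the upper bound.
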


\begin{proof}
If \(\sigma\geq \sigma_0>0\), then the integral (\ref{phifirstdef}) is absolutely and uniformly convergent (due to the rapid decay of \(\tilde w\), see Lemma \ref{ibp}). Hence \(\phi\) is holomorphic in this region. If additionally \(\sigma\leq \sigma_1<1/4\), we rewrite
\begin{multline*}
\phi(s)=\int_0^\infty \xi^{s-1}\int_0^\infty w(y)J_{k-1}(4\pi\sqrt{(k^2+\Delta^2)y\xi})dy d\xi\\
=\int_0^\infty w(y)\int_0^\infty \xi^{s-1}J_{k-1}(4\pi\sqrt{(k^2+\Delta^2)y\xi})d\xi dy,
\end{multline*}
where interchanging the order of integration is justified by absolute convergence -- the asymptotic (\ref{bessel3trunc}) of Lemma \ref{besselasymptotics} shows \(J_{k-1}(\xi)\ll \xi^{-1/2}\) for \(\xi\geq 2k\). Now making the substitution \(\xi\mapsto (16\pi^2(k^2+\Delta^2)y)^{-1}\xi^2\), we obtain
\begin{multline}\label{phidef}
\phi(s)=2\int_0^\infty w(y)(16\pi^2(k^2+\Delta^2)y)^{-s}\int_0^\infty \xi^{2s-1}J_{k-1}(\xi)d\xi dy\\
=(4\pi^2(k^2+\Delta^2))^{-s}\frac{\Gamma\Big(\frac{k-1}{2}+s\Big)}{\Gamma\Big(\frac{k+1}{2}-s\Big)}\int_0^\infty y^{-s}w(y)dy.
\end{multline}
In the last line, we applied (\ref{mellinbessel}). We have proved (\ref{phidef}) in the region \(0<\sigma_0\leq \sigma\leq \sigma_1<1/4\). But by the identity theorem, (\ref{phidef}) must hold in the larger half-plane \(\sigma\geq \sigma_0\).

We now deduce the result from the expression (\ref{phidef}). First, we bound the gamma factors by an application of Stirling's formula. This shows (see e.g. \cite[proof of Proposition 5.4]{iwanieckowalski}) that
\begin{equation*}
\frac{\Gamma\Big(\frac{k-1}{2}+s\Big)}{\Gamma\Big(\frac{k+1}{2}-s\Big)}\ll (k^2+t^2)^{\sigma-1/2}.
\end{equation*}
So (\ref{phidef}) implies
\begin{equation}\label{gammatermshandled}
\phi(s)\ll (k^2+\Delta^2)^{-\sigma} (k^2+t^2)^{\sigma-1/2}\Big|\int_0^\infty y^{-s}w(y)dy\Big|.
\end{equation}

Integrating by parts \(A\) times, we obtain
\[\int_0^\infty y^{-s}w(y)dy=\frac{1}{s-1}\int_0^\infty y^{-s+1}w'(y)dy=\cdots=\frac{1}{(s-1)\cdots(s-A)}\int_0^\infty y^{-s+A} w^{(A)}(y)dy.\]
Note \(\sigma\leq 1-\sigma_0\implies |s-1|, \ldots, |s-A|\geq \sqrt{t^2+\sigma_0^2}\gg_{\sigma_0} |t|+1\). Using the bound \(w^{(A)}\ll_A \Delta^A\), and noting that provided \(A\geq1\), we are integrating over a set of measure \(2\Delta^{-1}\), we obtain the bound
\[\int_0^\infty y^{-s}w(y)dy\ll_{A,\sigma_0} \Delta^{-1}\Big(\frac{\Delta}{|t|+1}\Big)^A=(|t|+1)^{-1}\Big(\frac{\Delta}{|t|+1}\Big)^{A'}, \text{ for any integer } A'(= A-1)\geq 0.\]
Combining this with (\ref{gammatermshandled}), the lemma is proved.
\end{proof}

To handle some error terms arising in the later computation, we will also require the following standard second moment estimate.

\begin{lemma}\label{zetasecmom}
For \(T\geq 100\), we have the bound
\begin{equation*}
\int_{-T}^T \int_{-T}^{T} \Big|\zeta\Big(\frac12+i(u+v)\Big)\Big|^2\frac{du}{|u|+1}\frac{dv}{|v|+1}\ll \log^3T.
\end{equation*}
\end{lemma}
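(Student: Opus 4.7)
The plan is to reduce the double integral to a one-dimensional integral of \(|\zeta(1/2+iw)|^2\) against a slowly varying weight, and then invoke the standard second moment bound \(\int_0^T|\zeta(1/2+it)|^2\, dt\ll T\log T\) via a dyadic decomposition.

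First, I would set \(w=u+v\) (keeping \(u\)) and swap the order of integration, transforming the double integral into
\[
\int_{-2T}^{2T}\Big|\zeta\Big(\frac12+iw\Big)\Big|^2 K(w)\, dw, \qquad K(w)=\int_{\substack{|u|\leq T\\ |w-u|\leq T}}\frac{du}{(|u|+1)(|w-u|+1)}.
\]

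Second, I would prove the kernel estimate \(K(w)\ll \log(|w|+2)/(|w|+1)\) uniformly in \(w\in[-2T,2T]\). For \(|w|\leq 1\) one trivially has \(K(w)\ll 1\). For larger \(|w|\), one splits the \(u\)-range into the regions \(|u|\leq |w|/2\), \(|w-u|\leq |w|/2\), and the complement: in the first two regions one factor in the integrand is \(\gg |w|\) while the other integrates to \(\ll \log(|w|+2)\); the complement can be handled by a further case split around \(|u|\asymp |w|\) versus \(|u|\gg |w|\), and contributes only \(\ll 1/(|w|+1)\).

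Finally, I would apply a dyadic decomposition in \(|w|\). For \(|w|\leq 1\), both \(|\zeta(1/2+iw)|\) and \(K(w)\) are bounded, so that range contributes \(O(1)\). For \(2^j\leq |w|\leq 2^{j+1}\) with \(0\leq j\ll \log T\), the kernel bound gives \(K(w)\ll (j+1)/2^j\), while the standard second moment bound yields \(\int_0^{2^{j+1}}|\zeta(1/2+it)|^2\, dt\ll 2^j(j+1)\). Hence each dyadic range contributes \(O((j+1)^2)\), and summing over the \(O(\log T)\) such ranges gives \(\ll \log^3 T\), as required. The only real obstacle is the convolution-type kernel bound; once that is in hand, the desired estimate is an immediate consequence of the classical mean-square of \(\zeta\) on the critical line.
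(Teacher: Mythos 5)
Your proposal is correct and follows essentially the same route as the paper: the same substitution \(w=u+v\), the same convolution-kernel bound \(K(w)\ll \log/(|w|+1)\) (yours is even marginally sharper, with \(\log(|w|+2)\) in place of \(\log T\)), and then the classical mean square of \(\zeta\) on the critical line. The only difference is cosmetic: you finish with a dyadic decomposition in \(|w|\), whereas the paper integrates by parts against \(\mathcal I(\xi)\sim\xi\log\xi\); both yield \(\int_2^{2T}|\zeta(\tfrac12+iw)|^2\,dw/w\ll\log^2 T\) and hence the stated \(\log^3T\).
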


\begin{proof}
Denote the integral in question by \(\mathcal J\). The change of variables \(w=u+v\) shows
\begin{multline}\label{zetanewvariables}
\mathcal J\coloneqq \int_{-T}^T \int_{-T}^{T} \Big|\zeta\Big(\frac12+i(u+v)\Big)\Big|^2\frac{du}{|u|+1}\frac{dv}{|v|+1}\\
=\int_{-2T}^{2T} \Big|\zeta\Big(\frac12+iw\Big)\Big|^2\int_{\max(-T, w-T)}^{\min(T, w+T)}\Big(\frac{1}{|u|+1}\cdot \frac{1}{|w-u|+1}\Big) dudw.
\end{multline}
To prove the lemma, we shall bound the above integral in the various different regions arising. First splitting the \(w\) integral in (\ref{zetanewvariables}), we obtain
\begin{equation}\label{zetaintwsplit}
\mathcal J=\int_2^{2T}\int_{w-T}^{T}+\int_{-2}^2\int_{\max(-T, w-T)}^{\min(T, w+T)}+\int_{-2T}^{-2}\int_{-T}^{w+T}.
\end{equation}
The middle integral, taken over \(-2\leq w\leq 2\) is 
\[\int_{-2}^2 \Big|\zeta\Big(\frac12+iw\Big)\Big|^2\int_{\max(-T, w-T)}^{\min(T, w+T)}\Big(\frac{1}{|u|+1}\cdot \frac{1}{|w-u|+1}\Big) dudw\ll \int_{-2}^2\int_{-T}^T\Big(\frac{1}{|u|+1}\Big)^2dudw\ll 1.\]
(We used that \(|w|\leq 2\implies |w-u|+1\asymp |u|+1\).) Next consider the last integral (over \(-2T\leq w\leq -2\)) in (\ref{zetaintwsplit}). Making the substitutions \(w\mapsto -w\) and \(u\mapsto -u\), and using that \(|\zeta(s)|=|\zeta(\overline{s})|\), we obtain
\begin{multline*}
\int_{-2T}^{-2} \Big|\zeta\Big(\frac12+iw\Big)\Big|^2\int_{-T}^{w+T}\Big(\frac{1}{|u|+1}\cdot \frac{1}{|w-u|+1}\Big)dudw\\
=\int_2^{2T}\Big|\zeta\Big(\frac12-iw\Big)\Big|^2\int_{-T}^{-w+T}\Big(\frac{1}{|u|+1}\cdot \frac{1}{|w+u|+1}\Big)dudw\\
=\int_2^{2T}\Big|\zeta\Big(\frac12+iw\Big)\Big|^2\int_{w-T}^{T}\Big(\frac{1}{|u|+1}\cdot \frac{1}{|w-u|+1}\Big)dudw, 
\end{multline*}
which is equal to the first integral in (\ref{zetaintwsplit}). It follows from (\ref{zetaintwsplit}) and the two calculations above that
\begin{equation}\label{zetaint3}
\mathcal J=2\int_2^{2T}\Big|\zeta\Big(\frac12+iw\Big)\Big|^2\int_{w-T}^{T}\Big(\frac{1}{|u|+1}\cdot \frac{1}{|w-u|+1}\Big)dudw+\mathcal O(1).
\end{equation}

We next split the inner integral over \(u\) in (\ref{zetaint3}). Since the integrand is non-negative and \(2\leq w\leq 2T\), we use that
\[\int_{w-T}^T\leq \int_{-T}^{2T+1}=\int_{-T}^{-1}+\int_{-1}^1+\int_1^{w-1}+\int_{w-1}^{w+1}+\int_{w+1}^{2T+1}.\]
This implies that the inner integral in (\ref{zetaint3}) (with \(2\leq w\leq 2T\)) is
\begin{align*}
&\int_{w-T}^{T}\Big(\frac{1}{|u|+1}\cdot \frac{1}{|w-u|+1}\Big)du\\
\ll& \int_{-T}^{-1}\frac{1}{-u}\cdot\frac{1}{w-u}du+\int_{-1}^1\frac{1}{w}du+\int_1^{w-1}\frac{1}{u}\cdot\frac1{w-u}du+\int_{w-1}^{w+1}\frac{1}{u}du+\int_{w+1}^{2T+1}\frac{1}{u}\cdot\frac{1}{u-w}du\\
\ll&\frac{1}{w}\Big\{\int_1^T\frac{du}{u}+1+\int_{1}^{w-1}\Big(\frac1u+\frac1{w-u}\Big)du+1+\int_{w+1}^{2T+1}\Big(\frac1{u-w}-\frac1u\Big)du\Big\}\ll \frac{\log T}w.
\end{align*}
Consequently, (\ref{zetaint3}) shows 
\begin{equation}\label{zetaint4}
\mathcal J\ll \log T \int_2^{2T} \Big|\zeta\Big(\frac12 +iw\Big)\Big|^2\frac{dw}w+1.
\end{equation}

Finally, it is a simple matter to bound the above integral. Denote the second moment of \(\zeta\) by
\[\mathcal I(\xi)\coloneqq \int_0^\xi \Big|\zeta\Big(\frac 12+it\Big)\Big|^2dt.\]
It is well-known that \(\mathcal I(\xi)\sim \xi\log\xi\) as \(\xi\to\infty\) (see e.g. \cite[Theorem 7.3]{titchmarsh}). Integrating by parts, it follows
\begin{multline*} 
\int_2^{2T} \Big|\zeta\Big(\frac12 +iw\Big)\Big|^2\frac{dw}w=\int_2^{2T} \frac{\mathcal I'(w)}{w}dw=\frac{\mathcal I(2T)}{2T}-\frac{\mathcal I(2)}{2}+\int_2^{2T}\frac{\mathcal I(w)}{w^2}dw\\
\ll \log T+\int_2^{2T}\frac{\log w}w dw\ll \log^2T.
\end{multline*}
The lemma now follows from (\ref{zetaint4}).
\end{proof}

\begin{proof}[Proof of Lemma \ref{diagterms}]
Throughout this proof, we denote \(\mathfrak c\coloneqq k^2+\Delta^2\) for simplicity. We apply the Mellin inversion theorem:
\[\tilde w\Big(\frac{nx}{\mathfrak c}\Big)=\frac{1}{2\pi i}\int_{(2/3)} \Big(\frac{\mathfrak c}{nx}\Big)^{s}\phi(s)ds.\]
It follows
\begin{equation}\label{sum1}
\sum_{n\geq1}\tilde w\Big(\frac{nx}{\mathfrak c}\Big)^2=\Big(\frac{1}{2\pi i}\Big)^2\int_{(2/3)}\int_{(2/3)}\Big(\frac{\mathfrak c}{x}\Big)^{z+w}\zeta(z+w)\phi(z)\phi(w)dzdw.
\end{equation}
We now shift the contours of integration to the line
\(\sigma=1/4\) (note the contributions of all horizontal contours are negligible since \(\phi(s)\) is negligible whenever \(|t|\geq \Delta k^{\epsilon}\)). The only pole arises at \(z+w=1\), and so it follows
\begin{multline}\label{sum2}
\sum_{n\geq1}\tilde w\Big(\frac{nx}{\mathfrak c}\Big)^2=\frac{\mathfrak c}{x}\frac{1}{2\pi i}\int_{(2/3)}\phi(z)\phi(1-z)dz\\
+\Big(\frac{1}{2\pi i}\Big)^2\int_{(1/4)}\int_{(1/4)}\Big(\frac{\mathfrak c}{x}\Big)^{z+w}\zeta(z+w)\phi(z)\phi(w)dzdw.
\end{multline}
We now bound the second integral. Write \(\Im z=u\) and \(\Im w=v\). Using the bound \(\phi(\sigma+it)\ll |t|^{-2}k^{-10000}\) (valid for \(|t|\geq \Delta k^\epsilon\)) of Lemma \ref{phibound} and the convexity bound \(\zeta(1/2+it)\ll (|t|+1)^{1/4+\epsilon}\) (see e.g. \cite[(5.1.5)]{titchmarsh}), we observe
\begin{multline*}
\Big(\frac{1}{2\pi i}\Big)^2\int_{(1/4)}\int_{(1/4)}\Big(\frac{\mathfrak c}{x}\Big)^{z+w}\zeta(z+w)\phi(z)\phi(w)dzdw\\
=\frac{1}{4\pi^2} \Big(\frac{\mathfrak c}{x}\Big)^{1/2}\int_{-\Delta k^\epsilon}^{\Delta k^\epsilon}\int_{-\Delta k^\epsilon}^{\Delta k^\epsilon}\zeta\Big(\frac 12+i(u+v)\Big)\Big(\frac{\mathfrak c}{x}\Big)^{i(u+v)}\phi(1/4+iu)\phi(1/4+iv)dudv\\
+\mathcal O\Big(\Big(\frac{\mathfrak c}{x}\Big)^{1/2}\int_{-\infty}^\infty \Big\{\int_{-\infty}^{-\Delta k^\epsilon}+ \int_{\Delta k^\epsilon}^\infty\Big\} (|u+v|+1)^{1/4+\epsilon} |u|^{-2}k^{-10000}|\phi(1/4+iv)|dudv\Big).
\end{multline*}
The error term above is \(\mathcal O(k^{-1000})\). Applying Lemma \ref{phibound} (with \(A=0\)), we therefore obtain
\begin{align} \label{zetaint2}
&\Big(\frac{1}{2\pi i}\Big)^2\int_{(1/4)}\int_{(1/4)}\Big(\frac{\mathfrak c}{x}\Big)^{z+w}\zeta(z+w)\phi(z)\phi(w)dzdw\\
\ll& x^{-1/2}\int_{-\Delta k^\epsilon}^{\Delta k^\epsilon}\int_{-\Delta k^\epsilon}^{\Delta k^\epsilon}\Big|\zeta\Big(\frac12 +i(u+v)\Big)\Big| (k^2+u^2)^{-1/4} (k^2+v^2)^{-1/4}\frac{ du}{|u|+1}\frac{dv}{|v|+1}+k^{-1000}\nonumber\\
\ll& x^{-1/2}k^{-1}\int_{-\Delta k^\epsilon}^{\Delta k^\epsilon}\int_{-\Delta k^\epsilon}^{\Delta k^\epsilon}\Big(1+\Big|\zeta\Big(\frac12 +i(u+v)\Big)\Big|^2\Big)\frac{ du}{|u|+1}\frac{dv}{|v|+1}+k^{-1000}\nonumber\\
\ll& x^{-1/2}k^{-1}\log^3 k,\nonumber
\end{align}
by Lemma \ref{zetasecmom}. Combining (\ref{sum2}) and (\ref{zetaint2}), we have established
\begin{equation}\label{sum3}
\sum_{n\geq1}\tilde w\Big(\frac{nx}{\mathfrak c}\Big)^2=\frac{\mathfrak c}{x}\frac{1}{2\pi i}\int_{(2/3)}\phi(z)\phi(1-z)dz+\mathcal O(x^{-1/2}k^{-1}\log^3k).
\end{equation}

It remains to evaluate the main term above. To do so, we first apply a version of the Plancherel theorem for Mellin transforms (see \cite[§A.2, (A.6)]{ivic}), which gives
\begin{equation}\label{plancherelmellin}
\frac{1}{2\pi i}\int_{(2/3)}\phi(z)\phi(1-z)dz=\int_0^\infty \tilde w(\xi)^2 d\xi.
\end{equation}
The second step involves applying another version of the Plancherel theorem relating the \(L^2\) norm of the Hankel transform \(\tilde w\) to the \(L^2\) norm of \(w\). This is essentially \cite[Lemma 3.4]{lauzhao}. By the definition (\ref{tildewdefrepeated}) we have
\begin{align}\label{needlabel}
\int_0^\infty \tilde w(\xi)^2&=\int_0^\infty \tilde w(\xi) \int_0^\infty w(u) J_{k-1}(4\pi \sqrt{\xi u \mathfrak c}) du d\xi\\
&=\frac{1}{4\pi^2\mathfrak c}\int_0^\infty \tilde w\Big(\frac{\xi^2}{16\pi^2\mathfrak c}\Big)\int_0^\infty w(u^2) J_{k-1}(\xi u) \xi u dud\xi\nonumber\\
&=\frac{1}{4\pi^2\mathfrak c}\int_0^\infty w(u^2)\int_0^\infty \tilde w\Big(\frac{\xi^2}{16\pi^2\mathfrak c}\Big) J_{k-1}(\xi u) \xi u d\xi du.\nonumber
\end{align}
(In the second line, we substituted \(\xi\mapsto \xi^2/(16\pi^2\mathfrak c)\) and \(u\mapsto u^2\).) The inner integral in (\ref{needlabel}) can now be recognised as a Hankel transform of \(\tilde w\). Since \(\tilde w\) is itself a Hankel transform of \(w\), we can relate the inner integral to \(w\) using Hankel inversion. Indeed, we have
\[\tilde w\Big(\frac{\xi^2}{16\pi^2\mathfrak c}\Big)=\int_0^\infty w(v)J_{k-1}(\xi \sqrt{v})dv=\int_0^\infty 2vw(v^2)J_{k-1}(\xi v)dv,\]
which is to say that \(\tilde w(\xi^2/(16\pi^2\mathfrak c))\) is the Hankel transform of the function \(2vw(v^2)\). Applying the Hankel inversion theorem (see \cite[§B.5]{iwaniecspectral}) now shows that the inner integral of (\ref{needlabel}) is
\[\int_0^\infty \tilde w\Big(\frac{\xi^2}{16\pi^2\mathfrak c}\Big)J_{k-1}(\xi u)\xi u d\xi = 2uw(u^2).\]
Replacing this in (\ref{needlabel}), we conclude
\begin{equation*}
\int_0^\infty \tilde w(\xi)^2d\xi =\frac{1}{2\pi^2\mathfrak c}\int_0^\infty uw(u^2)^2 du =\frac{1}{4\pi^2\mathfrak c}\int_0^\infty w(u)^2 du
=\frac{1}{4\pi^2 \mathfrak c}(1+\mathcal O(\Delta^{-1})).
\end{equation*}
Combining the above estimate with (\ref{sum3}) and (\ref{plancherelmellin}), we have established
\begin{equation*}
\sum_{n\geq1}\tilde w\Big(\frac{nx}{\mathfrak c}\Big)^2=\frac{1}{4\pi^2 x}+\mathcal O(x^{-1}\Delta^{-1})+\mathcal O(x^{-1/2}k^{-1}\log^3 k),
\end{equation*}
completing the proof of Lemma \ref{diagterms}.
\end{proof}

\appendix

\section{The Bessel Functions}\label{besselappendix}

In this appendix, we introduce the Bessel functions and record several useful facts. The Bessel functions \(J_{k-1}\) arise in the Petersson trace formula (Lemma \ref{trace}) and the Vorono\"i summation formula (Lemma \ref{vorprop}), so the results from this appendix will be used repeatedly throughout this paper.

We begin with a brief overview. Most of the basic facts can be found in Watson \cite{watson}. We are primarily concerned with Bessel functions of the first kind, which are defined \cite[p.40]{watson} by
\begin{equation}\label{besseldef}
J_\nu(z)=\sum_{l\geq 0}\frac{(-1)^l}{l!\Gamma(\nu+1+l)}\Big(\frac{z}{2}\Big)^{\nu+2l}.
\end{equation}
Throughout this appendix, the index \(\nu\) and argument \(z\) are assumed to be real and positive. In the case that \(\nu=n\) is an integer, we have the integral representation \cite[§2.2]{watson}:
\begin{equation}\label{besselintrep2}
J_n(z)=\frac{1}{\pi}\int_0^\pi \cos(n\theta-z\sin\theta)d\theta=\frac{1}{2\pi}\int_{-\pi}^\pi e^{in\theta}e^{-iz\sin\theta}d\theta.
\end{equation}
One immediately deduces the bound \(|J_n(z)|\leq 1\), valid for all \(z\geq0\). Differentiating (\ref{besseldef}), one can obtain the following useful relation \cite[§3.2]{watson}:
\begin{equation}\label{besseldiff1}
\frac{d}{dz} (z^\nu J_\nu(z))=z^\nu J_{\nu-1}(z).
\end{equation}
We also record that the Mellin transform of \(J_\nu(z)\) is given by \cite[(7.9.2)]{titchmarshfourier}: 
\begin{equation}\label{mellinbessel}
2^{s-1}\frac{\Gamma\Big(\frac{\nu}{2}+\frac{s}{2}\Big)}{\Gamma\Big(\frac{\nu}{2}+1-\frac{s}{2}\Big)}, \text{ valid for } -\nu< \Re s< 3/2.
\end{equation}

In the remainder of this appendix, we record some asymptotics for the Bessel functions \(J_\nu(z)\), valid when \(\nu\) is sufficiently large. The behaviour of \(J_\nu(z)\) is roughly as follows. When the argument \(z\) is small relative to the index \(\nu\), \(J_\nu(z)\) is small. There is a transition when \(z\approx \nu\): when \(\nu-\nu^{1/3}\leq z\leq \nu+\nu^{1/3}\), \(J_\nu(z)\) reaches its global maximum of size \(\approx \nu^{-1/3}\). Finally, once the argument becomes larger than the index, \(J_\nu(z)\) oscillates with amplitude decaying roughly like \(z^{-1/2}\). When \(z\leq \nu\), it is often convenient to make the change of variables \(z=\nu\sech\alpha\), with \(\alpha\in[0, \infty)\). Similarly, when \(z\geq \nu\) we set \(z=\nu\sec\beta\) with \(\beta\in[0,\pi/2)\).

We first record some asymptotics valid uniformly for all \(z\), due to Langer \cite{langer1}, \cite{langer2}. These are primarily useful when \(z\approx \nu\). Firstly, when \(z=\nu\sech\alpha\leq \nu\), we have
\begin{equation}\label{transsmall}
J_\nu(\nu\sech\alpha)=\frac1\pi \Big(\frac{\alpha-\tanh\alpha}{\tanh\alpha}\Big)^{1/2}K_{1/3}(\nu\alpha-\nu\tanh\alpha)+\mathcal O(\nu^{-4/3}),
\end{equation}
where \(K_{1/3}\) denotes the modified Bessel function of the second kind. When \(z=\nu\sec\beta\geq \nu\), we have 
\begin{equation}\label{transbig}
J_\nu(\nu\sec\beta)=\frac{1}{\sqrt{3}}\Big(\frac{\tan\beta-\beta}{\tan\beta}\Big)^{1/2}(J_{1/3}(\nu\tan\beta-\nu\beta)+J_{-1/3}(\nu\tan\beta-\nu\beta))+\mathcal O(\nu^{-4/3}).
\end{equation}
These are (68) and (66) of \cite[§14]{langer1}. They are proved in \cite{langer1} for \(\alpha\) and \(\beta\) which are \(\mathcal O(\nu^{-1/3})\), or equivalently for \(\nu\sech\alpha=\nu-\mathcal O( \nu^{1/3})\) and \(\nu\sec\beta=\nu+\mathcal O(\nu^{1/3})\). However, these are extended to hold uniformly for all \(\alpha, \beta\) in \cite[§19]{langer2}. (The notation in \cite{langer2} differs from that in \cite{langer1}, but one recovers (\ref{transsmall}) and (\ref{transbig}) after changing variables and applying the identities for the modified Bessel functions \(I_{\pm1/3}\) and \(K_{1/3}\) as in \cite[§14]{langer1}.)

We next consider the Bessel function \(J_\nu(z)\) away from the transition regime. If \(|z-\nu|/\nu^{1/3}\) is large, then the arguments \(\nu\alpha-\nu\tanh\alpha\) and \(\nu\tan\beta-\nu\beta\) of the Bessel functions \(K_{1/3}\) and \(J_{\pm 1/3}\) in (\ref{transsmall}) and (\ref{transbig}) are large. Consequently, standard asymptotic expansions for \(K_{1/3}\) and \(J_{\pm1/3}\) are available, which yields an expansion of \(J_\nu\) away from the transition. However, the error term \(\mathcal O(\nu^{-4/3})\) quickly becomes overwhelming.

It is possible to compute the entire asymptotic expansion of \(J_\nu(z)\) away from the transition. 
When \(z=\nu\sec\beta\geq \nu+\nu^{1/3+\epsilon}\), Langer \cite[§13, (63)]{langer1} proves the following\footnote{Langer \cite[§13, (64)]{langer1} also gives a corresponding expansion of \(J_\nu(\nu\sech\alpha)\), valid when \(z=\nu\sech\alpha\leq \nu-\nu^{1/3+\epsilon}\). These asymptotic expansions of \(J_\nu(\nu\sech\alpha)\) (\cite[§13, (64)]{langer1}) and \(J_\nu(\nu\sec\beta)\) ((\ref{langerasympexpandbig})) are essentially due to Debye, who first established them (giving all terms in the expansion) for any fixed \(\alpha, \beta\). See \cite[§8.3-§8.41]{watson} for details.}:
\begin{multline}\label{langerasympexpandbig}
J_\nu(\nu\sec\beta)\sim \Big(\frac{2}{\pi\nu\tan\beta}\Big)^{1/2}\Big\{\cos\Big(\nu\tan\beta-\nu\beta-\frac\pi4\Big)\Big(1+\sum_{n\geq1}\frac{P_n((\tan\beta-\beta)^{-1})}{\nu^n}\Big)\\
+\sin\Big(\nu\tan\beta-\nu\beta-\frac\pi4\Big)\sum_{n\geq1}\frac{Q_n((\tan\beta-\beta)^{-1})}{\nu^n}\Big\},
\end{multline}
where \(P_n\) and \( Q_n\) denote some (in principle computable) polynomials of degree \(n\).

We shall summarise the asymptotics (\ref{transsmall}), (\ref{transbig}) and (\ref{langerasympexpandbig}) in a lemma, giving them in a form convenient for use. But first, we introduce the Airy function. This will allow us to conveniently unify the transition regime asymptotics (\ref{transsmall}) and (\ref{transbig}). One defines the Airy function \(\Ai(x)\) for real \(x\) (see \cite[Ch.2, §8]{olver}) by the improper integral
\[\Ai(x)=\frac1\pi \int_0^\infty \cos(t^3/3+xt)dt.\] 
It is then the case that for \(x\geq0\), 
\begin{equation}\label{airybessel}
\Ai(x)=\frac{x^{1/2}}{\pi\sqrt3} K_{1/3}\Big(\frac23 x^{3/2}\Big), \text{ and } \Ai(-x)=\frac{x^{1/2}}{3}\Big(J_{1/3}\Big(\frac23 x^{3/2}\Big)+J_{-1/3}\Big(\frac23 x^{3/2}\Big)\Big).
\end{equation}
These relations are (1.04) and (1.05) of \cite[Ch.11, §1.1]{olver}. When the argument of the Airy function is large, asymptotic expansions are available. For sufficiently large \(x\geq 0\), one has
\begin{equation}\label{airypos}
\Ai(x)=\frac{1}{2\sqrt{\pi}}x^{-1/4}\exp\Big(-\frac{2}{3} x^{3/2}\Big)(1+\mathcal O(x^{-3/2})),
\end{equation}
and
\begin{equation}\label{airyneg}
\Ai(-x)=\frac{1}{\sqrt{\pi}}x^{-1/4}\cos\Big(\frac{2}{3}x^{3/2}-\frac{\pi}{4}\Big)(1+\mathcal O(x^{-3/2})).
\end{equation}
The derivative, \(\Ai'\), has the expansions (again for sufficiently large \(x\geq0\))
\begin{equation}\label{airy'pos}
\Ai'(x)=-\frac{1}{2\sqrt{\pi}}x^{1/4}\exp\Big(-\frac{2}{3} x^{3/2}\Big)(1+\mathcal O(x^{-3/2})),
\end{equation}
and
\begin{equation}\label{airy'neg}
\Ai'(-x)=\frac{1}{\sqrt{\pi}} x^{1/4}\sin\Big(\frac{2}{3}x^{3/2}-\frac{\pi}{4}\Big)(1+\mathcal O(x^{-3/2})).
\end{equation}
These asymptotics are given in (1.07), (1.08) and (1.09) of \cite[Ch.11, §1.1]{olver}. Being continuous, the Airy function \(\Ai\) is bounded on compact intervals. Consequently, for any real \(x\) we have the bounds
\begin{equation}\label{airybounds}
\Ai(x)\ll \frac{1}{1+|x|^{1/4}}, \text{ and similarly } \Ai'(x)\ll 1+|x|^{1/4}.
\end{equation}
We also note the following fact (see \cite[Ch.11, §12.2]{olver}):
\begin{equation}\label{airyint}
\int_{-\infty}^\infty \Ai(x)dx=1.
\end{equation}

Equipped with this background on the Airy function, we reformulate the asymptotics (\ref{transsmall}), (\ref{transbig}) and (\ref{langerasympexpandbig}) in the following convenient way.

\begin{lemma}[Bessel Function Asymptotics]\label{besselasymptotics}
Let \(\nu>0\) be sufficiently large. For any \(\epsilon>0\) and \(z\geq \nu+\nu^{1/3+\epsilon}\), we have 
\begin{equation}\label{bessel3trunc}
J_\nu(z)=\sqrt{\frac{2}{\pi}}(z^2-\nu^2)^{-1/4}\cos \omega(z)+\mathcal O_\epsilon\Big(\frac{z^2}{(z^2-\nu^2)^{7/4}}\Big),
\end{equation}
where the phase \(\omega\) is given by 
\begin{equation}\label{omega}
\omega(z)=\omega_\nu(z)=(z^2-\nu^2)^{1/2}-\nu\arctan\big((z^2/\nu^2-1)^{1/2}\big)-\pi/4.
\end{equation}
Furthermore, if \(y\) satisfies\footnote{This restriction appears since the main term is of comparable size to the error term for larger values of \(y\).} \(|y|\leq \nu^{4/15}\) we have
\begin{equation}\label{krasikovgood}
J_\nu(\nu+y\nu^{1/3})=\frac{2^{1/3}}{\nu^{1/3}}\Ai(-2^{1/3}y)+\mathcal O \Big(\frac{1+|y|^{9/4}}{\nu}\Big).
\end{equation}
\end{lemma}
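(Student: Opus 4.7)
The lemma is a repackaging of the Langer asymptotics (\ref{langerasympexpandbig}), (\ref{transbig}) and (\ref{transsmall}) already collected above, in a form convenient for the main text.

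For (\ref{bessel3trunc}), I would apply Langer's expansion (\ref{langerasympexpandbig}) with the substitution $z = \nu\sec\beta$, so that $\tan\beta = (z^2-\nu^2)^{1/2}/\nu$ and $\beta = \arctan((z^2/\nu^2-1)^{1/2})$. Under this change of variable, the leading amplitude $\sqrt{2/(\pi\nu\tan\beta)}$ becomes $\sqrt{2/\pi}\,(z^2-\nu^2)^{-1/4}$ and the leading phase $\nu\tan\beta - \nu\beta - \pi/4$ becomes precisely $\omega(z)$. The first correction in (\ref{langerasympexpandbig}) is $P_1/Q_1$ evaluated at $(\tan\beta-\beta)^{-1}$, divided by $\nu$. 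Using the uniform bound $\tan\beta - \beta \gg \min(\beta^3,\tan\beta)$ on $[0,\pi/2)$ together with $\sec^2\beta = z^2/\nu^2$, a short computation converts the first correction into $O(z^2/(z^2-\nu^2)^{7/4})$; the hypothesis $z \geq \nu + \nu^{1/3+\epsilon}$ ensures $(\tan\beta-\beta)^{-1}/\nu$ stays below $1$, so that the series in (\ref{langerasympexpandbig}) is genuinely asymptotic.

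For (\ref{krasikovgood}), I would treat $y \geq 0$ via (\ref{transbig}) with $z = \nu + y\nu^{1/3} = \nu\sec\beta$, and $y < 0$ via (\ref{transsmall}) with $z = \nu\sech\alpha$; the two cases are symmetric, so I describe only the former. Setting $\epsilon = y\nu^{-2/3}$, one has $\sec\beta = 1+\epsilon$, and Taylor expansion gives $\tan\beta = \sqrt{2\epsilon+\epsilon^2}$ and $\beta = \sqrt{2\epsilon}(1+O(\epsilon))$. Then
\[
\nu(\tan\beta - \beta) = \tfrac{1}{3}(2y)^{3/2} + O(y^{5/2}\nu^{-2/3}), \qquad \Big(\tfrac{\tan\beta-\beta}{\tan\beta}\Big)^{1/2} = \sqrt{\tfrac{2y}{3}}\,\nu^{-1/3}\big(1+O(y\nu^{-2/3})\big).
\]
Substituting into (\ref{transbig}) and using (\ref{airybessel}) to rewrite $J_{1/3}+J_{-1/3}$ at argument $\tfrac{1}{3}(2y)^{3/2}$ in terms of the Airy function $\Ai(-2^{1/3}y)$ delivers the main term $2^{1/3}\nu^{-1/3}\Ai(-2^{1/3}y)$ exactly.

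The main obstacle is the error bookkeeping. Combining the absolute error $O(\nu^{-4/3})$ from (\ref{transbig}) with the Taylor expansion errors above, the dominant contribution comes from the $O(y^{5/2}\nu^{-2/3})$ shift in the argument of $J_{\pm 1/3}$, which propagates through the bound $|J_{\pm 1/3}'(u)| \ll u^{-1/2}$ and the prefactor $\sqrt{y}\,\nu^{-1/3}$ to yield an error of size $O(y^{9/4}/\nu)$; the multiplicative factor $1 + O(y\nu^{-2/3})$ in the prefactor contributes a strictly smaller error of the same shape. The restriction $|y| \leq \nu^{4/15}$ is precisely what keeps this shift bounded by a constant, so that one is comparing Airy values at points a bounded distance apart; beyond this threshold the subleading terms of the Langer/Debye expansion become comparable in size to the main Airy term, and the lemma as stated would fail.
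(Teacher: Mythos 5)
Your proposal follows essentially the same route as the paper: substitute \(z=\nu\sec\beta\) into Langer's expansion (\ref{langerasympexpandbig}) and bound the correction terms via \(\tan\beta-\beta\gg\min\{(z^2/\nu^2-1)^{3/2},(z^2/\nu^2-1)^{1/2}\}\) for (\ref{bessel3trunc}), then Taylor-expand \(\tan\beta-\beta\) (resp.\ \(\alpha-\tanh\alpha\)) about the turning point and convert to the Airy function for (\ref{krasikovgood}); your error bookkeeping for large \(y\) reproduces the paper's \(|y|^{9/4}/\nu\) term by the same mechanism.

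One step needs repair: you propagate the argument shift through the bound \(|J_{\pm1/3}'(u)|\ll u^{-1/2}\), but this fails as \(u\to0^+\), since \(J_{-1/3}(u)\asymp u^{-1/3}\) and hence \(J_{-1/3}'(u)\asymp u^{-4/3}\) near the origin; for small \(|y|\) (where \(u\asymp|y|^{3/2}\) is small) this would produce a spurious blow-up of order \(|y|^{-3/2}\nu^{-1}\). The fix is exactly what the paper does: first rewrite \(J_{1/3}+J_{-1/3}\) at the \emph{perturbed} argument as an Airy function via (\ref{airybessel}), and only then apply the mean value theorem using the uniform bound \(\Ai'(x)\ll1+|x|^{1/4}\) of (\ref{airybounds}), which is regular through the turning point and yields the contribution \(\mathcal O(|y|^2(1+|y|^{1/4})\nu^{-2/3})\) to the Airy value, hence \(\mathcal O((1+|y|^{9/4})/\nu)\) overall. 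With that substitution your argument goes through in the full range \(|y|\leq\nu^{4/15}\).
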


\begin{remark}
For convenience, we record
\begin{equation}\label{omega'}
\omega'(z)=\frac{(z^2-\nu^2)^{1/2}}{z},
\end{equation}
and
\begin{equation}\label{omega''}
\omega''(z)=\frac{\nu^2}{z^2(z^2-\nu^2)^{1/2}}.
\end{equation}
\end{remark}

\begin{proof}
The asymptotic (\ref{bessel3trunc}) follows straightforwardly from (\ref{langerasympexpandbig}) upon making the change of variables \(z=\nu\sec\beta\). Indeed, one then has \(\nu\tan\beta=\nu(\sec^2\beta-1)^{1/2}=(z^2-\nu^2)^{1/2}\) and 
\[\tan\beta-\beta=\Big(\frac {z^2}{\nu^2}-1\Big)^{1/2}-\arctan\Big(\Big(\frac{z^2}{\nu^2}-1\Big)^{1/2}\Big)\gg \min \Big\{\Big(\frac{z^2}{\nu^2}-1\Big)^{3/2}, \Big(\frac{z^2}{\nu^2}-1\Big)^{1/2}\Big\}.\]
(The lower bound follows immediately from the bound \(\arctan \xi\leq \pi/2\), valid for all \(\xi\), and the asymptotic \(\xi-\arctan \xi\sim \xi^3/3\), valid for \(\xi=o(1)\).) Consequently (recalling \(P_n\) and \(Q_n\) are some polynomials of degree \(n\)) for \(z\geq \nu+\nu^{1/3+\epsilon}\) we bound
\begin{align*} 
&\cos\Big(\nu\tan\beta-\nu\beta-\frac\pi4\Big)\sum_{n\geq1}\frac{P_n((\tan\beta-\beta)^{-1})}{\nu^n}+\sin\Big(\nu\tan\beta-\nu\beta-\frac\pi4\Big)\sum_{n\geq1}\frac{Q_n((\tan\beta-\beta)^{-1})}{\nu^n}\\
&\ll \sum_{n\geq1}\frac{\min\Big\{\Big(\frac{z^2}{\nu^2}-1\Big)^{3/2}, \Big(\frac{z^2}{\nu^2}-1\Big)^{1/2}\Big\}^{-n}}{\nu^n}\ll \sum_{n\geq 1}\frac1{\nu^n} \Big\{\Big(\frac{z^2}{\nu^2}-1\Big)^{-3n/2}+\Big(\frac{z^2}{\nu^2}-1\Big)^{-n/2}\Big\}\\
&\ll_\epsilon \frac1\nu \Big\{\Big(\frac{z^2}{\nu^2}-1\Big)^{-3/2}+\Big(\frac{z^2}{\nu^2}-1\Big)^{-1/2}\Big\} \ll_\epsilon \frac{\nu^2}{(z^2-\nu^2)^{3/2}}+\frac{1}{(z^2-\nu^2)^{1/2}}\ll_\epsilon \frac{z^2}{(z^2-\nu^2)^{3/2}}.
\end{align*}
Here we required the assumption \(z\geq \nu+\nu^{1/3+\epsilon}\implies (z^2/\nu^2-1)^{1/2}\gg \nu^{-1/3+\epsilon/2}\) to ensure convergence of the series, and to bound \(\nu^2(z^2-\nu^2)^{-3/2}\leq z^2 (z^2-\nu^2)^{-3/2}\) and \((z^2-\nu^2)^{-1/2}\leq z^2(z^2-\nu^2)^{-3/2}\). We thus obtain (\ref{bessel3trunc}) by truncating the asymptotic expansion (\ref{langerasympexpandbig}) after the first term.

The transition regime asymptotic follows from (\ref{transsmall}) and (\ref{transbig}). We first apply the identities given in (\ref{airybessel}) to (\ref{transsmall}) and (\ref{transbig}), from which we obtain
\begin{equation}\label{nusechairy} 
J_\nu(\nu\sech\alpha)=\frac{2^{1/3}3^{1/6}}{\nu^{1/3}}\frac{(\alpha-\tanh\alpha)^{1/6}}{(\tanh\alpha)^{1/2}}\Ai\Big(\Big(\frac{3\nu}{2}(\alpha-\tanh\alpha)\Big)^{2/3}\Big)+\mathcal O(\nu^{-4/3}),
\end{equation}
and 
\begin{equation}\label{nusecairy} 
J_\nu(\nu\sec\beta)=\frac{2^{1/3}3^{1/6}}{\nu^{1/3}}\frac{(\tan\beta-\beta)^{1/6}}{(\tan\beta)^{1/2}} \Ai\Big(-\Big(\frac{3\nu}{2}(\tan\beta-\beta)\Big)^{2/3}\Big)+\mathcal O(\nu^{-4/3}).
\end{equation}
First addressing (\ref{nusechairy}), we make the change of variables \(\nu\sech\alpha=\nu+y\nu^{1/3}\) (with \(-\nu^{4/15}\leq y\leq0\)). One computes
\[\tanh\alpha=(1-\sech^2\alpha)^{1/2}=2^{1/2}|y|^{1/2}\nu^{-1/3}+\mathcal O(|y|^{3/2}\nu^{-1}),\]
\[\alpha-\tanh\alpha=\artanh(\tanh\alpha)-\tanh\alpha=\frac13 \tanh^3\alpha+\mathcal O(\tanh^5\alpha)=\frac{2^{3/2}}{3}|y|^{3/2}\nu^{-1}+\mathcal O(|y|^{5/2}\nu^{-5/3}),\]
and
\[\Big(\frac{3\nu}{2}(\alpha-\tanh\alpha)\Big)^{2/3}=2^{1/3}|y| +\mathcal O(|y|^2\nu^{-2/3})=-2^{1/3}y +\mathcal O(|y|^2\nu^{-2/3}).\]
By the mean value theorem and (\ref{airybounds}), it follows 
\[\Ai\Big(\Big(\frac{3\nu}{2}(\alpha-\tanh\alpha)\Big)^{2/3}\Big)=\Ai(-2^{1/3}y)+\mathcal O(|y|^2(1+|y|^{1/4})\nu^{-2/3}).\]
Combining the above calculations with (\ref{nusechairy}) and the Airy function bound (\ref{airybounds}), we obtain
\[J_\nu(\nu\sech\alpha)=J_\nu(\nu+y\nu^{1/3})=\frac{2^{1/3}}{\nu^{1/3}}\Ai(-2^{1/3}y)+\mathcal O\Big(\frac{1+|y|^{9/4}}{\nu}\Big),\]
which proves (\ref{krasikovgood}) in the case \(-\nu^{4/15}\leq y\leq0\).

In the case \(0\leq y\leq \nu^{4/15}\), we instead use (\ref{nusecairy}) and the change of variables \(\nu\sec\beta=\nu+y\nu^{1/3}\). Similarly, one computes
\[\tan\beta=2^{1/2}y^{1/2}\nu^{-1/3}+\mathcal O(y^{3/2}\nu^{-1}),\]
\[\tan\beta-\beta=\frac{2^{3/2}}{3}y^{3/2}\nu^{-1}+\mathcal O(y^{5/2}\nu^{-5/3}),\]
and
\[\Big(\frac{3\nu}{2}(\tan\beta-\beta)\Big)^{2/3}=2^{1/3}y+\mathcal O(y^2\nu^{-2/3}).\]
Exactly as in the case \(y\leq 0\), one combines these calculations and the Airy function bounds (\ref{airybounds}) with (\ref{nusecairy}) to obtain 
\[J_\nu(\nu\sec\beta)=J_\nu(\nu+y\nu^{1/3})=\frac{2^{1/3}}{\nu^{1/3}}\Ai(-2^{1/3}y)+\mathcal O\Big(\frac{1+|y|^{9/4}}{\nu}\Big),\]
which establishes (\ref{krasikovgood}) in the full range \(-\nu^{4/15}\leq y\leq \nu^{4/15}\).
\end{proof}


Finally, we give several bounds for the Bessel functions. These are conveniently simplified versions of bounds given by Rankin \cite[§4]{rankin}, which were deduced (in part) from the asymptotics of Langer \cite{langer1}.

\begin{lemma}[Bessel Function Bounds]\label{besselbounds}
Let \(\nu>0\) be sufficiently large. We have the following bounds.

\begin{enumerate}[label=(\roman*)] 
\item \label{besi} If \(0\leq z\leq (\nu+1)/4\), then
\begin{equation}\label{bessboundsmallarg}
J_\nu(z)\ll z^2\exp\Big(-\frac{14\nu}{13}\Big).
\end{equation}

\item \label{besii} If \(0\leq z\leq (\nu+1)-(\nu+1)^{1/3+\delta}\) for some \(0\leq\delta\leq 2/3\), then
\begin{equation}\label{bessel1}
J_\nu(z)\ll \exp(-\nu^\delta).
\end{equation}

\item \label{besiii} Uniformly for \(z\geq0\), we have
\begin{equation}\label{bessel2}
J_\nu(z)\ll \nu^{-1/3}.
\end{equation}

\end{enumerate}
\end{lemma}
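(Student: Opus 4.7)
My plan is to combine the asymptotics collected in Lemma \ref{besselasymptotics} with the classical Debye-type asymptotic for $J_\nu(\nu\sech\alpha)$ away from the transition regime (Watson \cite{watson}, §8.4), treating each of the three bounds according to the regime of $z$ relative to $\nu$. The bounds are essentially due to Rankin \cite{rankin} via input from Langer, so the proof is largely a matter of translation.

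For (ii), I would write $z=\nu\sech\alpha$ with $\alpha\geq 0$. The hypothesis $z\leq (\nu+1)-(\nu+1)^{1/3+\delta}$ forces $\alpha\gg \nu^{-1/3+\delta/2}$, since $1-\sech\alpha\sim\alpha^2/2$ for small $\alpha$. The standard asymptotic $K_{1/3}(x)\ll e^{-x}/\sqrt{x}$ for $x\geq 1$, combined with $\alpha-\tanh\alpha\sim\alpha^3/3$ for small $\alpha$ (and monotonicity for larger $\alpha$), shows that the main term of Langer's asymptotic (\ref{transsmall}) decays like $\exp(-c\nu(\alpha-\tanh\alpha))\ll\exp(-c\nu^{3\delta/2})\ll\exp(-\nu^\delta)$. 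The main obstacle here is that the additive error $\mathcal O(\nu^{-4/3})$ in (\ref{transsmall}) is only polynomial in $\nu$ and would swamp any stretched-exponential bound. I would overcome this by invoking the sharper Debye asymptotic $J_\nu(\nu\sech\alpha)\ll e^{-\nu(\alpha-\tanh\alpha)}/\sqrt{\nu\tanh\alpha}$ (with multiplicative $1+\mathcal O(\nu^{-1})$ error), which holds uniformly for $\alpha\gg \nu^{-1/3+\epsilon}$ and follows from stationary-phase analysis of the integral representation (\ref{besselintrep2}).

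For (i), I would apply the same Debye asymptotic. The hypothesis $z\leq(\nu+1)/4$ corresponds to $\cosh\alpha\geq 4\nu/(\nu+1)=4-\mathcal O(\nu^{-1})$, so $\alpha\geq\alpha_0-\mathcal O(\nu^{-1})$ where $\cosh\alpha_0=4$. A direct computation gives $\alpha_0-\tanh\alpha_0=\log(4+\sqrt{15})-\sqrt{15}/4\approx 1.095>14/13\approx 1.077$; by monotonicity of $\alpha\mapsto\alpha-\tanh\alpha$, one has $\nu(\alpha-\tanh\alpha)\geq 14\nu/13$ for $\nu$ large enough. Debye therefore yields $J_\nu(z)\ll\exp(-14\nu/13)/\sqrt\nu$ in the range $1\leq z\leq(\nu+1)/4$, and the bound $z^2\geq 1$ harmlessly absorbs the $1/\sqrt\nu$ factor. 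For very small $z$ (say $z\leq 1$), the $z^2$ factor emerges from the power series (\ref{besseldef}): the leading term is $(z/2)^\nu/\Gamma(\nu+1)\leq(z/2)^2(1/2)^{\nu-2}/\Gamma(\nu+1)$ for $\nu\geq 2$, and the remaining series forms a bounded multiplicative correction by a standard ratio estimate; the resulting bound is (much) smaller than $z^2\exp(-14\nu/13)$.

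Finally, for (iii) I would split $[0,\infty)$ into three regimes by fixing $\delta=1/4$. If $z\leq\nu-\nu^{1/3+\delta}$, the just-proved bound (ii) gives $J_\nu(z)\ll e^{-\nu^{1/4}}\ll\nu^{-1/3}$. If $|z-\nu|\leq\nu^{1/3+\delta}$, I would set $y=(z-\nu)/\nu^{1/3}$, so $|y|\leq\nu^{1/4}\leq\nu^{4/15}$ for $\nu$ large, and apply (\ref{krasikovgood}): the main term $2^{1/3}\Ai(-2^{1/3}y)/\nu^{1/3}$ is $\ll\nu^{-1/3}$ by the Airy bound (\ref{airybounds}), and the error $\mathcal O((1+|y|^{9/4})/\nu)=\mathcal O(\nu^{-7/16})$ is also $\ll\nu^{-1/3}$. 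If $z\geq\nu+\nu^{1/3+\delta}$, the asymptotic (\ref{bessel3trunc}) yields $J_\nu(z)\ll(z^2-\nu^2)^{-1/4}+z^2(z^2-\nu^2)^{-7/4}$; since $z^2-\nu^2\gg\nu^{4/3+1/4}$, both terms are $\ll\nu^{-1/3-1/16}$. Together, these three ranges exhaust $[0,\infty)$ and establish the uniform bound.
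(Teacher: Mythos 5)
Your proposal is correct, and in substance it is the same proof as the paper's: treat the three regimes of \(z\) relative to \(\nu\) separately, using a Debye-type exponential bound below the transition, the Airy asymptotic (\ref{krasikovgood}) with (\ref{airybounds}) at the transition, and (\ref{bessel3trunc}) above it; your verification of (iii) in particular mirrors the paper's almost line for line (with cutoffs \(\nu^{7/12}\) in place of \(\nu^{3/5}\)). The only real difference is in packaging: the paper routes parts \ref{besi} and \ref{besii} through Rankin's Lemmas 4.1 and 4.2, namely \(J_\nu(z)\ll \nu^{-1/2}(ez/(2\nu))^\nu\) and \(J_\nu(z)\ll (\nu^2-z^2)^{-1/4}\exp(-\tfrac{\nu}{3}(1-z^2/\nu^2)^{3/2})\), which are prepackaged consequences of the Debye/Langer asymptotics, whereas you invoke the Debye expansion in the \(\sech\alpha\) variables directly and, for part \ref{besi}, split off the range \(z\leq 1\) to extract the factor \(z^2\) from the power series (\ref{besseldef}). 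Both numerical verifications of the constant \(14/13\) are sound (\(3\log 2-1\approx 1.079\) in the paper versus \(\log(4+\sqrt{15})-\sqrt{15}/4\approx 1.095\) in yours), and your observation that the additive \(\mathcal O(\nu^{-4/3})\) error in (\ref{transsmall}) must be avoided in favour of a multiplicative-error Debye bound is exactly the point that forces the appeal to Rankin/Debye rather than to Langer's uniform formula. The one caveat is your parenthetical claim that the uniform Debye bound down to \(\alpha\gg\nu^{-1/3+\epsilon}\) "follows from stationary-phase analysis of the integral representation (\ref{besselintrep2})": that representation holds only for integer order, and for \(z<\nu\) there is no real stationary point, so one genuinely needs steepest descent (or the Langer/Olver uniform theory, which is what Rankin's Lemma 4.2 encodes); since you also cite Watson directly for the result, this is an imprecision of attribution rather than a gap.
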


\begin{remark}
Whilst both bounds \ref{besi} and \ref{besii} are valid when in the region where \(J_\nu\) is very small, they are useful in different ranges. The first bound \ref{besi} is useful when \(z\) is extremely small (in fact, the somewhat artificial \(z^2\) term is inserted to conveniently ensure convergence of series like \(\sum_m J_\nu(\nu/m)\)). The second bound \ref{besii} is useful when \(z\) is larger, and close to the transition regime. The bound \ref{besiii} is primarily useful in the transition regime (i.e. for \(z\) such that \(|z-\nu|\ll \nu^{1/3}\)), where it is sharp. 
\end{remark}

\begin{proof}
Part \ref{besi} follows from \cite[Lemma 4.1]{rankin}, which gives that for any \(z\geq0\)
\begin{multline*}
J_\nu(z)\ll \nu^{-1/2}\Big(\frac{ez}{2\nu}\Big)^\nu=\nu^{-1/2}\Big(\frac{ez}{2\nu}\Big)^2\exp\Big(-(\nu-2)\log\Big(\frac{2\nu}{ez}\Big)\Big)\\
\ll z^2\nu^{-5/2}\exp\Big(-(\nu-2)\log \Big(\frac{2(\nu+1)}{ez}\Big)\Big).
\end{multline*}
If we assume \(z\leq (\nu+1)/4\iff \log(2(\nu+1)/(ez))\geq 3\log2-1\), it follows
\begin{equation*}
J_\nu(z)\ll z^2\nu^{-5/2}\exp(-(\nu-2)(3\log2-1))\ll z^2\exp\Big(-\frac{14\nu}{13}\Big),
\end{equation*}
as claimed.

Part \ref{besii} can be deduced from \cite[Lemma 4.2]{rankin}. This gives that for \(0\leq z\leq \nu(1-\nu^{-2/3})^{1/2}\),
\begin{equation}\label{rankinlem42}
J_\nu(z)\ll (\nu^2-z^2)^{-1/4}\exp\Big(-\frac{\nu}{3}\Big(1-\frac{z^2}{\nu^2}\Big)^{3/2}\Big)\ll \nu^{-1/3}\exp\Big(-\frac{(\nu-z)^{3/2}}{3\nu^{1/2}}\Big).
\end{equation}
Note that for sufficiently large \(\nu\) and \(0\leq\delta\leq 2/3\), we have \((\nu+1)-(\nu+1)^{1/3+\delta}\leq \nu-\nu^{1/3+\delta}/3\leq \nu(1-\nu^{-2/3})^{1/2}\). 
Our assumption \(z\leq (\nu+1)-(\nu+1)^{1/3+\delta}\) therefore implies \(z\leq \nu(1-\nu^{-2/3})^{1/2}\) and \(\nu-z\geq \nu^{1/3+\delta}/3\).
Consequently, it follows from (\ref{rankinlem42}) that \(J_\nu(z)\ll \nu^{-1/3}\exp(-3^{-5/2} \nu^{3\delta/2})\ll \exp(-\nu^\delta)\) as claimed.

Part \ref{besiii} (cf. \cite[Lemma 4.4]{rankin}) is immediately established by (\ref{rankinlem42}) if \(z\leq \nu-\nu^{3/5}\). If \(\nu-\nu^{-3/5}\leq z\leq \nu+\nu^{3/5}\), then the bound required in \ref{besiii} follows immediately from (\ref{krasikovgood}) and the Airy function bound (\ref{airybounds}). Finally, if \(z\geq \nu+\nu^{3/5}\), the asymptotic (\ref{bessel3trunc}) shows \(J_\nu(z)\ll (z^2-\nu^2)^{-1/4}\ll \nu^{-2/5}\ll \nu^{-1/3}\). 
\end{proof}

\bibliographystyle{plain}
\bibliography{References}

\end{document}